\newcommand{\vertgleich}{ \stackrel{\mathcal{D}}{=} }
\newcommand{\ceil}[1]{\lceil #1 \rceil}
\numberwithin{equation}{section}
\def\er{\mathbb{R}}
\def\leb{\mathbb{L}}
\def\cc{\mathcal{C}}
\def\e{\varepsilon}
\def\te{\vartheta}
\def\hte{\hat{\vartheta}}
\def\sjni{\sum_{i=1}^n}
\def\beq{\begin{eqnarray*}}
\def\eeq{\end{eqnarray*}}
\def\RMSE{\text{\rm RMSE}}
\def\my{Y^*_k}
\def\mY{\mathcal{Y}^*_k}
\def\y{\mathcal{Y}}
\def\fin{\Big(\frac in\Big)}
\begin{document}

\title{Efficient estimation of functionals in nonparametric boundary models\footnote{We are grateful for very helpful comments and questions by the referees. Financial support by the DFG through Research Unit FOR 1735 {\it Structural Inference in Statistics: Adaptation and Efficiency} is  acknowledged.}
}

\author{\parbox[t]{6.5cm}{\centering Markus
 Rei\ss\\[2mm]
 \normalsize{\it
Institute of Mathematics\\
Humboldt-Universit\"at zu Berlin}\\
\mbox{} mreiss@mathematik.hu-berlin.de} and  \parbox[t]{6.5cm}{\centering Leonie Selk\\[2mm]
 \normalsize{\it
Institute of Mathematics\\
Universit\"at Hamburg}\\
\mbox{} leonie.selk@math.uni-hamburg.de}}

\maketitle

\begin{abstract}
For nonparametric regression with one-sided errors and a boundary curve model for Poisson point processes we consider the problem of efficient estimation for linear functionals. The minimax optimal rate is obtained by an unbiased estimation method which nevertheless depends on a H\"older condition or monotonicity assumption for the underlying regression or boundary function.

We first construct a simple blockwise estimator and then build up a nonparametric maximum-likelihood approach for exponential noise variables and the point process model. In that approach also non-asymptotic efficiency is obtained (UMVU: uniformly minimum variance among all unbiased estimators).The proofs rely essentially on martingale stopping arguments for counting processes and the point process geometry. The estimators are easily computable and a small simulation study confirms their applicability.
\end{abstract}

{\small \noindent {\it Key words and Phrases:} frontier estimation, support estimation, Poisson point process, sufficiency, completeness, UMVU, nonparametric MLE, shape constraint, monotone boundary, optional stopping.\\
\noindent {\it AMS subject classification:  62G08, 62G15, 62G32, 62M30,60G55}\\

\section{Introduction}

For   regression models
\begin{equation}\label{EqRegr}
 \y_i=g(i/n)+\eps_i,\quad i=1,\ldots,n,
 \end{equation}
the estimation of linear functionals of the  regression function $g$ is well understood if $(\eps_i)$ are uncorrelated with mean zero and variance $\sigma^2>0$.  Then the discrete functionals \begin{equation}\label{Eqthetan}
\theta^{(n)}=\frac1n\sum_{i=1}^ng(i/n)w(i/n) \text{ for some function } w:[0,1]\to\R
 \end{equation}
can be estimated by the plug-in version $\hat\theta_n=\frac1n\sum_{i=1}^n\y_iw(i/n)$ without bias and with variance $\frac{\sigma^2}{n^2}\sum_{i=1}^n w(i/n)^2$. By the Gau{\ss}-Markov theorem $\hat\theta_n$ has minimal variance among all linear and unbiased estimators. In the Gaussian case  $\hat\theta_n$ is even UMVU (uniformly of minimum variance among all unbiased estimators).
In the corresponding continuous-time signal-in-white-noise model
$dY(t)=g(t)dt+\sigma n^{-1/2}dW_t$, $t\in[0,1]$,
with a Brownian motion $W$ and some $g\in L^2([0,1])$ the plug-in estimator $\hat\theta=\int_0^1 w(t)dY(t)$ is equally an unbiased estimator of
\begin{equation}\label{Eqtheta}
\theta=\int_0^1 g(t)w(t)\,dt\text{ for some } w\in L^2([0,1])
 \end{equation}
 of variance $\frac{\sigma^2}{n}\int_0^1 w(t)^2dt$. By the Riesz representation theorem, we can thus estimate any linear $L^2$-continuous functional of $g$ with parametric rate $n^{-1/2}$.

In certain applications, however, the function $g$ is determined as the boundary or frontier function of the observations, which can be modeled equivalently by one-sided errors $(\eps_i)$.  The prototypical case is that $(\eps_i)$ are  i.i.d. with $\eps_i\ge 0$ and for some $\lambda>0$
\begin{equation}\label{EqPeps}
P(\eps_i\le x)=\lambda x+O(x^2) \text{ as } x\downarrow 0,
\end{equation}
e.g. $\eps_i\sim\Exp(\lambda)$. In that case the parametric rate for the location model (i.e. assuming $g$ to be constant) is with $n^{-1}$ much faster than in the regular case. These irregular statistical models have also found considerable theoretical interest, e.g. in the recent work by \cit{baraudbirge}. A rate-optimal estimator is given by the extreme value statistics $\min_i\y_i$. For the nonparametric problem of estimating the  function $g$ in $L^2$-loss, the optimal rate is $n^{-\beta/(\beta+1)}$ for $g$ in a H\"older ball of regularity $\beta\in(0,1]$ and radius $R>0$:
\begin{equation}\label{EqHoelder}
g\in \cc^{\beta}(R)=\Big\{f:[0,1]\to\R\,|\,\forall x,y\in[0,1]:\;\abs{f(y)-f(x)}\le R\abs{y-x}^{\beta}\Big\}.
\end{equation}
This is achieved by a local polynomial estimator $\hat g_{n,h}$ as in the regular case, see e.g. \cit{jirakmeisterreiss} for a construction and a survey of the large literature on that topic. A plug-in estimator $\hat\theta_{n}:=\int_0^1 \hat g_{n,h_n}(x)w(x)dx$ with optimal bandwidth $h_n$ to estimate $\theta$ in \eqref{Eqtheta} can achieve at best the rate $n^{-\beta/(\beta+1/2)}$. This is due to a pointwise bias of order $h^\beta$ and a pointwise variance of order $(nh)^{-2}$, which after integration and by using independence results in a total mean squared error of order $h^{2\beta}+n^{-2}h^{-1}$ for the plug-in estimator.  The standardised rate $n^{-\beta/(\beta+1/2)}$ is not optimal and for $\beta<1/2$ even slower than $n^{-1/2}$ in the regular case. At the heart of the problem is the usual nonparametric bias bound, which cannot be improved by averaging.

Here we show that the optimal estimation rate for $\theta^{(n)}$ under one-sided errors is $n^{-(\beta+1/2)/(\beta+1)}$ for $g\in\cc^\beta(R)$. The  improvement over the plug-in estimator is achieved by an unbiased estimation procedure. The bias is exactly zero for the  case of exponentially distributed errors and it is asymptotically negligible under \eqref{EqPeps} for $\beta>1/2$. Compared with standard nonparametric results it is remarkable that an unbiased estimator can be constructed whose rate is nevertheless worse than the parametric rate ($n^{-1}$ in this case). The risk bound comes from a trade-off between two terms in the variance instead of the usual bias-variance trade-off.

As for mean regression with the signal-in-white-noise model, also for one-sided errors an analogous continuous-time model is most useful in exhibiting the main statistical structure. It is given by observing a Poisson point process (PPP) on $[0,1]\times\R$ of intensity
\begin{equation}\label{EqPPP}
\lambda_g(x,y)=n{\bf 1}(y\ge g(x)),\quad x\in[0,1],\,y\in\R,
 \end{equation}
see e.g. \cit{karr} or \cit{daleyverejones} for point process properties and Figure 1 below for an illustration. For sufficiently regular $g$ this model can be shown to be asymptotically equivalent to the regression-type model \eqref{EqRegr} with $\lambda=1$ in \eqref{EqPeps}, cf. \cit{meister:reiss}. At the same time, this serves as a canonical model for support boundary estimation from i.i.d. observations. For instance, \cit{girardjacob} propose projection based estimators for $g$ in this model class and derive convergence rates as well as limit distributions, already relying on  bias reduction techniques.  Also \cit{bibingerjirakreiss} use it as an agnostic model for limit order books in financial markets.

We first develop the methods in the fundamental PPP model for $\theta$ from \eqref{Eqtheta} and then transfer them explicitly to the discrete model \eqref{EqRegr}.
By a blocking technique $\theta$ can be estimated without bias and at the minimax optimal rate. The method is then extended to the one-sided regression setting. Using Lepski's method we are then able to provide also an adaptive estimator, that is an estimator which does not rely on the smoothness parameters $\beta,R$ and still attains the minimax rate up to a logarithmic factor. In a second step we can even construct an estimator of $\theta$ which is UMVU. This non-asymptotic efficiency result is based on a nonparametric maximum-likelihood approach, where the maximum-likelihood estimator (MLE) $\hat g^{MLE}$ is not only explicit, but also forms a sufficient and complete statistics. In parallel with Gaussian mean regression we thus have the UMVU-property of the estimator, but  its asymptotic rate is worse than for parametric location estimation. Still, we are able to prove its asymptotic normality and to provide a self-normalising CLT such that asymptotic inference is feasible. The MLE approach equally works  for the class of monotone functions $g$.

The regression-type model \eqref{EqRegr} with one-sided errors and the PPP model \eqref{EqPPP} have a similar structure as models considered for density support estimation or image boundary recovery problems. Let us review briefly the literature on functional estimation for these statistical models. Many asymptotic results for the expected area of the convex hull for i.i.d. observations are based on the classical results by \cit{renyi:sulanke:1964}. Based on these results, the ideas of the present paper have been used by \cit{baldinreiss} to construct an UMVU estimator for the volume of a convex body. For image recovery problems \cit{korost:tsyb:book} describe already the rate $n^{-(\beta+1/2)/(\beta+1)}$ obtained for the functional $\int_0^1 g(x)dx$. The upper bound is based on a localisation step and loses a logarithmic factor. By threefold sample splitting \cit{gayraud} has constructed an estimator achieving this rate exactly for the related density support area estimation. An interesting linear programming approach is proposed by \cit{girardiouditskynazin}. Yet, these estimators are analysed asymptotically and lack the non-asymptotic unbiasedness and UMVU property we have found here.
Many other estimators are concerned with the estimation of the density support set or the regression-type function itself, not of the area or other functionals, let us mention the work by \cit{mammen:tsyb} for connections to classification problems. Specifically, a nonparametric MLE approach under monotonicity has been developed by \cit{korost:etal} for the asymptotically exact risk in estimating the density support set in Hausdorff distance. In Gaussian mean regression a nonparametric MLE over regular function classes is equivalent to a least-squares approach with roughness penalty, leading e.g. to smoothing splines. Under shape constraints the MLE is a well studied object, see e.g. \cit{groeneboomwellner}, but usually results are derived asymptotically.

In the next section we shall develop a simple block-wise estimator. Based on optional stopping for an
intrinsic martingale we prove that it is unbiased under the PPP model and under exponential noise in the regression-type model. For more general regression noise the required compensation cannot be achieved exactly, but it comes close to the model with corresponding exponential noise. The third part of that section presents the adaptive estimator, while the final part presents the lower bound implying that the rate is indeed optimal. The nonparametric MLE approach is presented in Section 3, first for the class of H\"older functions, then for monotone functions. The derivation of the completeness of the nonparametric MLE and the stopping arguments for the intrinsic martingale are intriguing. For the MLE under Lipschitz conditions we obtain central limit theorems which allow for feasible confidence sets.

In Section 4 we discuss some implications of the results, in particular concerning  estimating coefficients in a  projection estimator approach. Extensions and limitations are mentioned and a small simulation study shows that the estimators are numerically feasible and have satisfying finite-sample properties.  Most proofs are instructive and reveal some beautiful interplay between statistics, probability and geometry such that in the Appendix we only provide some technical lemmata (some of independent interest) and the more involved proofs of the adaptive rate and the CLT.
The notation follows the usual conventions. We write $a_n\lesssim b_n$ or $a_n=O(b_n)$ to say that $a_n$ is bounded by a constant multiple of $b_n$ and $a_n\thicksim b_n$ for $a_n\lesssim b_n$ as well as $b_n\lesssim a_n$. Moreover, $a_n=o(b_n)$ means $a_n/b_n\to 0$ and $a_n\asymp b_n$ stands for $a_n/b_n\to 1$.

\section{Simple rate-optimal estimation}

\subsection{Block-wise estimation in the PPP model}\label{ppp}

Let $(X_j,Y_j)_{j\ge 1}$ denote the observations of the Poisson point process (PPP) with intensity \eqref{EqPPP}. We shall estimate $\theta$ from \eqref{Eqtheta} without any bias. To grasp the main idea, suppose that $w(x)=1$ holds and that we know a deterministic function $\bar g:[0,1]\to\R$ with the property $\bar g\ge g$ (pointwise). Then the number of PPP observations below the graph of $\bar g$ is Poisson-distributed with intensity equal to $n$ times the area between $g$ and $\bar g$:
\[ \sum_{j\ge 1}{\bf 1}(Y_j\le \bar g(X_j))\sim \Poiss\Big(n\int_0^1(\bar g-g)(x)\,dx\Big).\]
This yields an unbiased pseudo-estimator $\bar\theta$:
\[ \bar\theta:=\int_0^1\bar g(x)\,dx-\frac1n\sum_{i\ge 1} {\bf 1}(Y_i\le \bar g(X_i)) \Rightarrow \E[\bar\theta]=\int_0^1 (\bar g-(\bar g-g))(x)\,dx=\theta.\]
The larger the area  between the graphs, the larger is the Poisson parameter and thus the variance of $\bar\theta$.

Now, we shall define an empirical substitute for
$\bar g$, which by stopping time arguments keeps the unbiasedness, but is nevertheless sufficiently close to $g$.
We partition $[0,1]$ in subintervals $I_k=[kh,(k+1)h)$ of length $h$ with $h^{-1}\in\N$ and note that the block-wise minimum $\my:=\min_{j:X_j\in I_k}Y_j$ satisfies $\my\ge\min_{x\in I_k}g(x)$. By the H\"older property of $g$ we conclude that $g(x)\le \my+Rh^\beta$ holds for all $x\in I_k$ and thus $\my+Rh^\beta$ is a local upper bound for $g$,  see also Figure \ref{fig-est-block}.
We thus estimate the functional locally on these blocks by
$$\hte_k:=(\my+Rh^\beta)\bar w_k-\frac{1}{nh}\sum_{i\ge 1}{\bf 1}\Big(X_i\in I_k,Y_i\le \my+Rh^\beta\Big)w(X_i),$$
where $\bar w_k=\frac1h\int_{I_k}w(x)dx$ and the true local parameter is $\te_k:=\frac 1h\int_{I_k}g(x)w(x)dx$.

\begin{figure}[t]\begin{center}
\begin{minipage}[t]{0.89\textwidth}
\includegraphics[width=\textwidth]{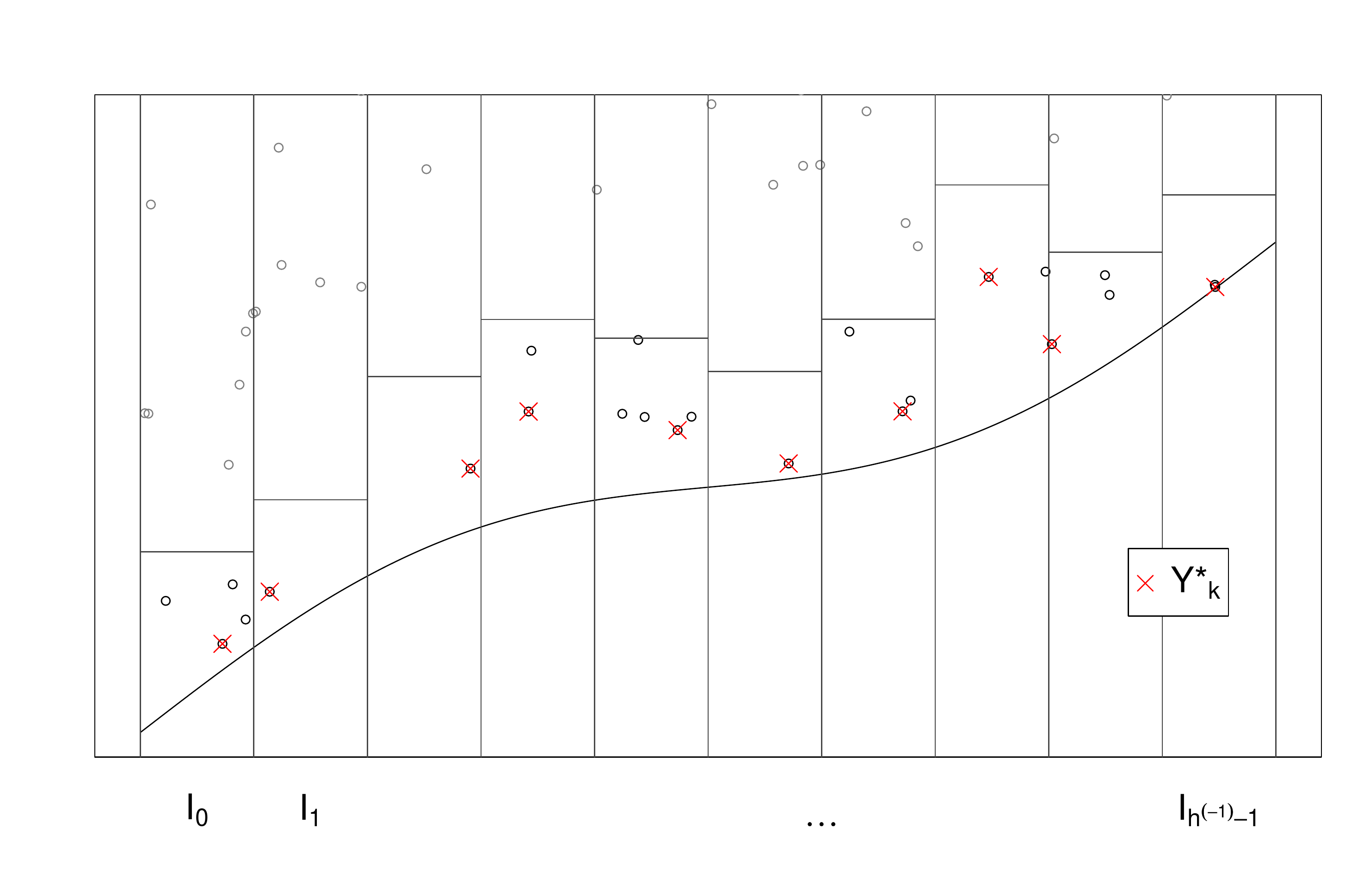}
\end{minipage}
\end{center}\vspace{-1cm}
\caption{Construction of  $\hte^{block}$. Circles indicate PPP observations $(X_i,Y_i)$, crosses blockwise minima $Y_k^\ast$ and horizontal lines the upper boundaries $Y_k^\ast+R h^\beta$.}\label{fig-est-block}
\end{figure}

\begin{theorem}
The estimator $\hte^{block}=\sum_{k=0}^{h^{-1}-1}\hte_k h$ satisfies with $\norm{w}_{L^2}^2=\int_0^1w(x)^2dx$
\[ \E[\hte^{block}]=\theta,\quad \Var(\hte^{block})\le \frac{2Rh^\beta+(nh)^{-1}}{n}\norm{w}_{L^2}^2.\]
In particular, the asymptotically optimal block size $h\asymp(2\beta Rn)^{-1/(\beta+1)}$ yields
\[\limsup_{n\to\infty}\sup_{g\in\cc^\beta(R)}n^{(2\beta+1)/(\beta+1)}\Var(\hte^{block})\le \frac{\beta+1}{\beta}(2\beta R)^{1/(\beta+1)}\norm{w}_{L^2}^{2}.\]
\end{theorem}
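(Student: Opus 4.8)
\emph{Proof plan.} The plan is to attach to each block an intrinsic martingale and to stop it at the random level $\tau_k:=\my+Rh^\beta$. Fix a block $I_k$, put $m_k:=\min_{x\in I_k}g(x)$, and let $(\cf_\ell)_{\ell\in\R}$ be the filtration generated by the PPP points $(X_i,Y_i)$ with $X_i\in I_k$ and $Y_i\le\ell$, i.e.\ we sweep a horizontal line upwards. Since a Poisson process has independent restrictions to disjoint sets, the weighted compensated counting process
\[
M_k(\ell):=\sum_{i:\,X_i\in I_k}{\bf 1}(Y_i\le\ell)\,w(X_i)-n\int_{I_k}(\ell-g(x))^+w(x)\,dx
\]
is an $(\cf_\ell)$-martingale with predictable quadratic variation $\langle M_k\rangle(\ell)=n\int_{I_k}(\ell-g(x))^+w(x)^2\,dx$, both null for $\ell\to-\infty$ since $g$ is bounded. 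The block minimum $\my$ is an $(\cf_\ell)$-stopping time, hence so is $\tau_k$, and the H\"older condition gives $g(x)\le m_k+Rh^\beta\le\tau_k$ for every $x\in I_k$, so that $(\tau_k-g(x))^+=\tau_k-g(x)$ on $I_k$.

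A tail estimate for $\my$ is needed throughout and comes for free: $\PP(\my-m_k>t)=\exp\!\big(-n\int_{I_k}(m_k+t-g(x))^+dx\big)$, and $g\le m_k+Rh^\beta$ on $I_k$ forces the exponent to be at least $nh(t-Rh^\beta)^+$, so $\PP(\my-m_k>t)\le e^{-nh(t-Rh^\beta)^+}$ and hence $\E[\my-m_k]\le Rh^\beta+(nh)^{-1}$ after integrating the tail; in particular $\my$, the final count, and $\langle M_k\rangle(\tau_k)$ have finite exponential moments, which will licence optional stopping for $M_k$ and for $M_k^2-\langle M_k\rangle$ at $\tau_k$. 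Now write $S_k:=\sum_{i:X_i\in I_k}{\bf 1}(Y_i\le\tau_k)w(X_i)$; evaluating $M_k$ at $\tau_k$ and dropping the superfluous positive parts gives $S_k=M_k(\tau_k)+nh\,(\tau_k\bar w_k-\te_k)$, whence the clean identity
\[
\hte_k=\tau_k\bar w_k-\tfrac1{nh}S_k=\te_k-\tfrac1{nh}M_k(\tau_k).
\]
Optional stopping yields $\E[M_k(\tau_k)]=0$, so $\E[\hte_k]=\te_k$, and summing over $k$ gives $\E[\hte^{block}]=\sum_k h\,\te_k=\sum_k\int_{I_k}gw=\theta$.

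For the variance, the same identity together with optional stopping for $M_k^2-\langle M_k\rangle$ gives $\Var(\hte_k)=\tfrac1{n^2h^2}\E[M_k(\tau_k)^2]=\tfrac1{nh^2}\E\!\big[\int_{I_k}(\tau_k-g(x))w(x)^2dx\big]$; bounding $\tau_k-g(x)\le(\my-m_k)+Rh^\beta$ pointwise on $I_k$ and inserting the tail bound above yields $\Var(\hte_k)\le\frac{2Rh^\beta+(nh)^{-1}}{nh^2}\int_{I_k}w(x)^2dx$. The restrictions of the PPP to the disjoint strips $I_k\times\R$ are independent and $\hte_k$ is measurable with respect to the $k$-th one, so $\Var(\hte^{block})=h^2\sum_k\Var(\hte_k)$, and summing $\int_{I_k}w^2$ to $\norm{w}_{L^2}^2$ gives the stated bound $\Var(\hte^{block})\le\frac{2Rh^\beta+(nh)^{-1}}{n}\norm{w}_{L^2}^2$.

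For the last assertion, substitute $h=(2\beta Rn)^{-1/(\beta+1)}$ (up to rounding so that $h^{-1}\in\N$, which does not affect the limit) into this bound; multiplying by $n^{(2\beta+1)/(\beta+1)}$ reduces the right-hand side to $\big(2R(2\beta R)^{-\beta/(\beta+1)}+(2\beta R)^{1/(\beta+1)}\big)\norm{w}_{L^2}^2$, and a one-line simplification turns the bracket into $\tfrac{\beta+1}{\beta}(2\beta R)^{1/(\beta+1)}$; since all bounds are uniform in $g\in\cc^\beta(R)$, the $\limsup$ statement follows. I expect the one genuinely delicate point to be the probabilistic bookkeeping of the first two paragraphs: fixing the correct level-indexed filtration, verifying that $M_k$ is a genuine marked point-process martingale with exactly the stated compensator and quadratic variation, that $\tau_k$ is a stopping time, and that the optional stopping theorem is applicable at $\tau_k$ — the last resting on the exponential tail of $\my$. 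Once that is in place, the remainder is algebra and a single tail integral.
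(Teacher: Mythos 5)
Your proposal is correct and follows essentially the same route as the paper: the compensated weighted counting process as an intrinsic martingale in the vertical level, optional stopping at $\tau_k=\my+Rh^\beta$ justified by the exponential tail of the block minimum, the identity $\hte_k=\te_k-\frac{1}{nh}M_k(\tau_k)$, the quadratic-variation formula for the variance, independence across blocks, and the same algebra for the optimal $h$. The only cosmetic difference is that you derive $\E[\my-m_k]\le Rh^\beta+(nh)^{-1}$ directly from the PPP void probability, whereas the paper uses stochastic domination by an $\Exp(nh)$ variable; these are equivalent.
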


\begin{proof}

Let us study  the weighted counting process
\[ N(t):=\sum_{i\ge 1} {\bf 1}\Big(X_i\in I_k,Y_i\le t\Big)w(X_i),\quad t\in\R.\]
The pure counting process $\sum_i{\bf 1}(X_i\in I_k,Y_i\le t)$ is a point process in $t$ with deterministic intensity $\lambda_t=n\int_{I_k}(t-g(x))_+dx$. Hence, $(N(t),t\in\R)$ is a process with independent increments satisfying (e.g. via Prop. 2.32 in \cit{karr})
\[ \E[N(t)]=\int_{I_k}n(t-g(x))_+w(x)dx,\quad \Var(N(t))=\int_{I_k}n(t-g(x))_+w(x)^2dx.\]
In particular, $M(t)=N(t)-\E[N(t)]$ is a c\` adl\`ag martingale with respect to the filtration
\begin{equation}\label{EqFt}
{\cal F}_t=\sigma((X_i,Y_i){\bf 1}(Y_i\le t),\,i\ge 1),\quad t\in\R,
\end{equation}
 with mean zero and predictable quadratic variation $\langle M\rangle_t=\Var(N(t))$.

Now note that $\tau:=\my+Rh^\beta$ is an $({\cal F}_t)$-stopping time with
\begin{align}
P(\tau\ge t)&=\exp\Big(-n\int_{I_k}(t-Rh^\beta-g(x))_+dx\Big)\nonumber\\
&\le \exp\Big(-nh(t-\max_{x\in I_k}g(x)-R h^\beta)\Big)\label{tauest}
\end{align}
for $t\ge \max_{x\in I_k}g(x)+R h^\beta$. In particular, $\tau$ has finite expectation and  Lemma \ref{LemStop} on optional stopping yields
\[ \E[M(\tau)]=0 \Rightarrow \E[N(\tau)]=
n\int_{I_k}\E[(\tau-g(x))_+]w(x)dx\]
and
\[\Var(M(\tau))=\E[\langle M\rangle_\tau]=n\int_{I_k}\E[(\tau-g(x))_+]w(x)^2dx.\]
Noting $\tau\ge g(x)$  we have
\[ \E[(\tau-g(x))_+]=\E[\my]+Rh^\beta-g(x)\text{ for all $x\in I_k$}.\]
The identity
\[ \hte_k=\tau\bar w_k-\frac1{nh}N(\tau)=\theta_k-\frac1{nh}M(\tau)\]
 implies $\E[\hte_k]=\te_k$ and
\[ \Var(\hte_k)=\frac1{n^2h^2}\Var(M(\tau))=\frac{1}{nh^2}\int_{I_k}\E[\my+Rh^\beta-g(x)]w(x)^2dx.\]
A rough universal bound, using that $Y_k^\ast-\max_{x\in I_k}g(x)$ is stochastically smaller than the minimum in $y$ of a PPP with intensity $n{\bf 1}(x\in I_k,y\ge 0)$, yields with a random variable $E\sim \Exp(nh)$
\[\E[\my]\le \E\Big[\max_{x\in I_k}g(x)+E\Big]\le g(x)+Rh^\beta+(nh)^{-1}.\]
This implies
\[\Var(\hte_k)\le \frac{2Rh^\beta+(nh)^{-1}}{nh^2}\int_{I_k}w(x)^2dx.\]
We conclude for the final estimator $\hte^{block}=\sum_{k=0}^{h^{-1}-1}\hte_kh$
by the independence of $(\hte_k)_k$ that
\[ \E[\hte^{block}]=\te,\quad \Var(\hte^{block})\le \frac{2Rh^\beta+(nh)^{-1}}{n}\int_0^1w(x)^2dx.\]
Finally, insertion of the asymptotically optimal $h$ yields the  variance bound.
\end{proof}

\subsection{Blockwise estimation in the regression-type model}\label{regr}

We consider the equi-distant regression model \eqref{EqRegr}
 where $(\eps_i)$ are i.i.d. satisfying  \eqref{EqPeps}. The primary example will be $\eps_i\sim \Exp(\lambda)$, but any distribution on $\R^+$ with a Lipschitz continuous density $f_\eps$ at zero  and $f_\eps(0)=\lambda$ will be covered, as soon as some loose tail bound at infinity holds.

Since the observation design is discrete, our parameter of interest becomes
$\theta^{(n)}$ from \eqref{Eqthetan}. In analogy with the PPP case we build an estimator for $\theta_k=\frac1{nh}\sum_{i\in\tilde I_k}g(i/n)w(i/n)$ on each block of indices $\tilde I_k:=\{i:kh<\frac in\leq (k+1)h\}$, where $h^{-1},nh\in\N$:
\begin{equation}\label{EqTildethetak}\tilde\theta_k:=\frac{1}{nh}\sum_{i\in\tilde I_k}\Big(\y_i\wedge(\mY+Rh^\beta)-\lambda^{-1}{\bf 1}\Big(\y_i\le \mY+Rh^\beta\Big)\Big)w(i/n).
\end{equation}
Here, $\mY=\min_{i\in\tilde I_k}\y_i$ is again the minimal observation on each block. In contrast to the PPP-estimator the empirical upper bound for $g$ on $I_k$ is given by the minimum of $\mY+Rh^\beta$ and $\y_i$, which for the rate-optimal choice of $h$, however, has negligible impact. We obtain the following result where $\norm{w}_p=(\frac1n\sum_{i=1}^n\abs{w(i/n)}^p)^{1/p}$ denotes the standardised $\ell^p$-norm.

\begin{theorem}\label{ThmRegr}
Let the i.i.d. error variables $\eps_i$ satisfy \eqref{EqPeps} as well as $\bar F_\eps(y)=P(\eps_i>y)\lesssim (1+y)^{-\rho}$ for some $\rho>0$.
For $g\in\cc^\beta(R)$ the estimator $\tilde\theta_n^{block}=\sum_{k=0}^{h^{-1}-1}\tilde\theta_kh$ satisfies for $h\to 0$ with $nh\to\infty$ uniformly in $n,h,R,\beta$
\[ \abs{\E[\tilde\theta_n^{block}-\theta^{(n)}]}\lesssim (Rh^\beta+(n h)^{-1})^2 \norm{w}_1,\quad
\Var(\tilde\theta_n^{block})\lesssim h(Rh^\beta+(n h)^{-1})^2\norm{w}_2^2.\]
In particular, uniformly over $\beta\ge\beta_0>1/2$,  $R\le R_0<\infty$ we obtain with the rate-optimal block size $h\thicksim (Rn)^{-1/(\beta+1)}$
\[ (\E[\tilde\theta_n^{block}-\theta^{(n)}])^2=o(\Var(\tilde\theta_n^{block})),\quad
\Var(\tilde\theta_n^{block})\lesssim  R^{1/(\beta+1)}n^{-(2\beta+1)/(\beta+1)}\norm{w}_2^2.
\]
In the case $\eps_i\sim\Exp(\lambda)$ we have for any $\beta\in(0,1]$, $R,\lambda>0$ the more precise result
\[ \E[\tilde\theta_n^{block}]=\theta,\quad
\Var(\tilde\theta_n^{block})\le \frac{2Rh^\beta+(n\lambda h)^{-1}}{ n\lambda}\norm{w}_2^2.\]
\end{theorem}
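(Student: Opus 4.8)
The plan is to mimic the martingale-stopping argument of the PPP case, but now the weighted counting process must be built out of the discrete data and the exponential-type structure of the errors must be exploited to get exact (or nearly exact) compensation. Fix a block $\tilde I_k$. First I would introduce, for each design point $i\in\tilde I_k$, the ``survival time'' $\eps_i=\y_i-g(i/n)\ge 0$ and consider the filtration ${\cal F}_t=\sigma(\eps_i{\bf 1}(\eps_i\le t),\,i\in\tilde I_k)$; the key point is that $\mY=\min_{i\in\tilde I_k}\y_i$ gives, via the H\"older bound $g(i/n)\le \mY+Rh^\beta$ for all $i\in\tilde I_k$, an ${\cal F}_t$-stopping time $\tau:=\mY+Rh^\beta$ after shifting by the (unknown) local sup of $g$. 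In the exponential case the process $N(t):=\sum_{i\in\tilde I_k}{\bf 1}(\eps_i\le t-g(i/n))w(i/n)$ has compensator $\sum_i w(i/n)\Lambda(t-g(i/n))$ with $\Lambda(s)=\lambda s_+$ (the integrated hazard), so $M(t)=N(t)-\sum_i w(i/n)\lambda(t-g(i/n))_+$ is a martingale; the crucial algebraic identity is that the estimator $\tilde\theta_k$ rewrites, using the memoryless property, as $\theta_k-\frac1{nh\lambda}M(\tau')$ for the appropriate stopping time, up to the boundary correction from the $\y_i\wedge(\mY+Rh^\beta)$ truncation. Optional stopping (Lemma \ref{LemStop}, applicable because $\tau$ has exponential tails exactly as in \eqref{tauest} with $nh$ replaced by $n h\lambda$) then gives $\E[M(\tau)]=0$ and $\E[\langle M\rangle_\tau]=\lambda\sum_i w(i/n)^2\E[(\tau-g(i/n))_+]$, and since $\tau\ge g(i/n)$ pointwise, $\E[(\tau-g(i/n))_+]=\E[\mY]+Rh^\beta-g(i/n)$, exactly paralleling the PPP computation. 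Combined with the universal bound $\E[\mY-\max_{i\in\tilde I_k}g(i/n)]\le (n h\lambda)^{-1}$ (stochastic domination by an $\Exp(n h\lambda)$ variable, i.e. the minimum of $nh$ i.i.d. $\Exp(\lambda)$'s) this yields the stated exact unbiasedness and the variance bound $\Var(\tilde\theta_k)\le \frac{2Rh^\beta+(n\lambda h)^{-1}}{n\lambda h^2}\sum_{i\in\tilde I_k}w(i/n)^2$; summing over the $h^{-1}$ independent blocks gives the final exponential-case bound.

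For the general-error case under \eqref{EqPeps} and the tail condition $\bar F_\eps(y)\lesssim(1+y)^{-\rho}$, exact compensation fails because the hazard rate of $\eps_i$ is $\lambda$ only to first order near $0$. My plan is to keep the same stopping-time construction but to split the error into the ``ideal exponential'' contribution and a remainder: write $\Lambda_\eps(s)=-\log\bar F_\eps(s)$ for the true integrated hazard, so that $N(t)-\sum_i w(i/n)\Lambda_\eps(t-g(i/n))$ is a genuine martingale, and compare $\Lambda_\eps(s)$ with $\lambda s$. On the stopped range $s=\tau-g(i/n)\in[0,Rh^\beta+(\mY-\max g)]$, which is $O(Rh^\beta+(nh)^{-1})$ with overwhelming probability by \eqref{tauest}-type tail bounds, the assumption \eqref{EqPeps} gives $|\Lambda_\eps(s)-\lambda s|=O(s^2)=O((Rh^\beta+(nh)^{-1})^2)$. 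Optional stopping for the true martingale plus this quadratic discrepancy, summed over the blocks and weighted by $w$, produces the bias bound $\lesssim (Rh^\beta+(nh)^{-1})^2\norm w_1$; the variance bound follows similarly from $\E[\langle M\rangle_\tau]\lesssim \lambda\sum_i w(i/n)^2 (Rh^\beta+(nh)^{-1})$ after dividing by $(n h)^2$ and summing the $h^{-1}$ blocks, which gives the extra factor $h$ relative to a single block — hence $\Var\lesssim h(Rh^\beta+(nh)^{-1})^2\norm w_2^2$ (I would double-check the bookkeeping of the $h$-powers here, since the statement's variance scaling differs from the exponential case by a factor $h$, reflecting that the general-case ``noise floor'' is the squared quantity). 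The truncation $\y_i\wedge(\mY+Rh^\beta)$ versus $\mY+Rh^\beta$ contributes only on the event that some $\y_i$ falls in the thin slab $[\,\mY,\mY+Rh^\beta]$, whose probability and conditional contribution are both $O(Rh^\beta+(nh)^{-1})$, hence of lower order for the rate-optimal $h\thicksim(Rn)^{-1/(\beta+1)}$ when $\beta>1/2$; this is where the restriction $\beta_0>1/2$ enters, since then $(Rh^\beta+(nh)^{-1})^2 \ll h^{1/2}(Rh^\beta+(nh)^{-1})$ (the squared bias is dominated by the standard deviation) precisely when $h^\beta\ll h^{1/2}$, i.e. $\beta>1/2$.

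The main obstacle I anticipate is the general-error bias control, specifically making rigorous that the stopped martingale plus a controlled quadratic remainder really does yield a bias of order $(Rh^\beta+(nh)^{-1})^2$ \emph{uniformly} in $n,h,R,\beta$: one must handle the event where $\tau$ is atypically large (tail bound via \eqref{tauest}-analogue, using the $(1+y)^{-\rho}$ tail to keep contributions summable), verify that optional stopping applies to the true-hazard martingale (finite expectation of $\tau$, which again needs the tail assumption), and track how the weights $w(i/n)$ aggregate to $\norm w_1$ in the bias and $\norm w_2^2$ in the variance rather than crossing norms. A secondary technical nuisance is that the discrete blocks $\tilde I_k$ contain exactly $nh$ points, so all ``$\int_{I_k}$'' in the PPP proof become ``$\frac1{nh}\sum_{i\in\tilde I_k}$'' with an $O((nh)^{-1})$ Riemann-sum error that must be absorbed into the already-present $(nh)^{-1}$ term; this is routine but needs care to preserve uniformity. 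Once these are in place, the passage from per-block estimates to the global estimator is immediate by independence of the blocks, exactly as in the proof of the first theorem.
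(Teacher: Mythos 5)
Your plan is essentially the paper's proof: the same hazard-compensated martingale $M(t)=\sum_i({\bf 1}(\y_i\le t)+\log\bar F_\eps(\y_i\wedge t-g(i/n)))w(i/n)$, the same stopping time $\tau=\my+Rh^\beta$ with optional stopping justified by moment/tail bounds on $\tau$, the same quadratic discrepancy $G_\eps(z)=\lambda z+\log\bar F_\eps(z)=O(z^2)$ driving the bias, and block independence for the aggregation. One small correction: the truncation $\y_i\wedge\tau$ needs no separate error analysis — it is absorbed exactly into the martingale identity $\tilde\theta_k-\theta_k=\frac{1}{n\lambda h}(\sum_i G_\eps(\y_i\wedge\tau-g(i/n))w(i/n)-M(\tau))$ — and the condition $\beta>1/2$ comes solely from the squared-bias-versus-variance comparison you already state, not from the truncation.
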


\begin{remark}
The result and proof for $\eps_i\sim\Exp(\lambda)$ are exactly as in the PPP model. For other distributions of $\eps_i$ the estimator is only asymptotically unbiased, but for H\"older regularity $\beta>1/2$ the bias is negligible with respect to the stochastic error. A side remark is that for strong asymptotic equivalence with the PPP model in Le Cam's sense the necessary minimal regularity $\beta>1$ is higher, see \cit{meister:reiss}.
\end{remark}

\begin{proof}
Fix a block with index $k$ and consider for $t\in\R$
\begin{equation}\label{EqMartRegr}
 M(t):=\sum_{i\in \tilde I_k}\Big({\bf 1}(\y_i\le t)+\log\bar F_\eps(\y_i\wedge t-g(i/n))\Big)w(i/n).
 \end{equation}
With respect to the filtration ${\cal F}_t=\sigma(\y_i{\bf 1}(\y_i\le t),\,i\in\tilde I_k)$, $(M(t),t\in\R)$ defines a martingale with $\E[M(t)]=0$ and quadratic variation $\langle M\rangle_t=\sum_{i\in \tilde I_k}(-\log\bar F_\eps(\y_i\wedge t-g(i/n)))w(i/n)^2$: just note that the compensator of $\sum_{i\in \tilde I_k}{\bf 1}(\y_i\le t)$ equals the integrated hazard function $\sum_{i\in\tilde I_k}\int_0^{\y_i\wedge t-g(i/n)} \bar F_\eps(s)^{-1}dF_\eps(s)$. Moreover, $\tau:=\mY+Rh^\beta$ is a stopping time with respect to $({\cal F}_t)$. From the representation
\begin{align}\label{EqErrorMart}
\tilde\theta_k-\theta_k&=\frac{1}{n\lambda h}\Big(\sum_{i\in\tilde I_k}G_\eps(\y_i\wedge \tau -g(i/n))w(i/n)
-M(\tau)\Big)
\text{ with } G_\eps(z):=\lambda z+\log\bar F_\eps(z)
\end{align}
and the stopping Lemma \ref{LemStop}  in combination with $\E[\tau]<\infty$ due to the moment bound from Lemma \ref{LemTauMom} below we conclude
\[ \E[\tilde\theta_k-\theta_k]=\frac{1}{n\lambda h}\sum_{i\in\tilde I_k}\E[G_\eps(\y_i\wedge\tau-g(i/n))]w(i/n).
\]
In the case $\eps_i\sim\Exp(\lambda)$ we have $\log(\bar F_\eps(z))={-\lambda z}$, that is $G_\eps(z)=0$, and the estimator is unbiased.

By Assumption \eqref{EqPeps}, there is some $\delta>0$ such that $G_\eps(z)=O(z^2)$ holds for $z\in[0,\delta]$. For $z\ge \delta$ we have also $G_\eps(z)\le \lambda z=O(z^2)$ and $G_\eps(z)\ge -\abs{\log(\bar F_\eps(z))}$.
By Lemma \ref{LemTauMom}, Cauchy-Schwarz inequality, monotonicity of $\bar F_\eps$ and $\bar F_\eps(\eps_i)\sim U[0,1]$ this leads  to
\begin{align*}
&\abs{\E[\tilde\theta_k-\theta_k]}\\
&\lesssim\sum_{i\in\tilde I_k} \frac{\E[(\tau-g(i/n))^2]+\E[\log(\bar F_\eps(\y_i-g(i/n)))^2]^{1/2}P(\abs{\log(\bar F_\eps(\tau-g(i/n)))}>\delta)^{1/2}}{n h}\abs{w(i/n)}\\
&\lesssim\sum_{i\in\tilde I_k} \frac{\E[(\tau-g(i/n))^2]+\E[\log(\bar F_\eps(\eps_i))^2]^{1/2}P(\bar F_\eps(\min_{i\in \tilde I_k}\eps_i+2Rh^\beta)<e^{-\delta})^{1/2}}{n h}\abs{w(i/n)}\\
&\lesssim  \frac{(Rh^\beta+(n h)^{-1})^2+\bar F_\eps(\bar F_\eps^{-1}(e^{-\delta})-2Rh^\beta)^{nh/2}}{n h}\sum_{i\in\tilde I_k}\abs{w(i/n)}.
\end{align*}
With $h\to 0$ and $nh\to\infty$ the second term in the numerator converges geometrically fast to zero and the assertion for the bias of $\tilde\theta_n^{block}$ follows.

For the variance bound we use $\Var(A+B)\le 2\Var(A)+2\Var(B)$, $\Var(\sum_{i\in\tilde I_k}A_i)\le nh\sum_{i\in\tilde I_k}\E[A_i^2]$, $\abs{G_\eps(z)}\lesssim z+\abs{\log\bar F_\eps(z)}{\bf 1}(z>\delta)$ and the stopping Lemma \ref{LemStop} to obtain in analogy with the bias part:
\begin{align*}
&\Var(\tilde\theta_k)=\frac{1}{(n\lambda h)^2}\Var\Big(\sum_{i\in\tilde I_k}G_\eps(\y_i\wedge\tau-g(i/n))w(i/n)-M(\tau)\Big)\\
&\le \frac{2}{(n\lambda h)^2}\sum_{i\in \tilde I_k}\E\Big[\Big(nhG_\eps(\y_i\wedge\tau-g(i/n))^2+\abs{\log\bar F_\eps(\y_i\wedge\tau-g(i/n))}\Big)\Big]w(i/n)^2\\
&\lesssim \sum_{i\in \tilde I_k}\Big(\frac{\E\big[(\tau-g(i/n))^2\big]+\E\big[\abs{\log\bar F_\eps(\eps_i)}^4\big]^{1/2}P\big(\bar F_\eps\big(\min_{i\in \tilde I_k}\eps_i+2Rh^\beta\big)<e^{-\delta}\big)^{1/2}}{n h}\\
&\qquad + \frac{\E[\tau-g(i/n)]+\E\big[\abs{\log\bar F_\eps(\eps_i)}^2\big]^{1/2}P\big(\bar F_\eps\big(\min_{i\in \tilde I_k}\eps_i+2Rh^\beta\big)<e^{-\delta}\big)^{1/2}}{(n h)^2}\Big) w(i/n)^2\\
&\lesssim  \frac{(Rh^\beta+(n h)^{-1})^2+(Rh^\beta+(n h)^{-1})(nh)^{-1} + \bar F_\eps(\bar F_\eps^{-1}(e^{-\delta})-2Rh^\beta)^{nh/2}}{n h}\sum_{i\in \tilde I_k}w(i/n)^2\\
&\lesssim  \frac{(Rh^\beta+(n h)^{-1})^2} {n h}\sum_{i\in \tilde I_k}w(i/n)^2.
\end{align*}
For the global estimator we infer $\Var(\tilde\theta_n^{block})\lesssim h(Rh^\beta+(n h)^{-1})^2\norm{w}_2^2$ by independence of $(\tilde\theta_k)$. It remains to insert the rate-optimal choice of $h$ and to note that $n^{-4\beta/(\beta+1)}=o(n^{-(2\beta+1)/(\beta+1)})$ holds for $\beta>1/2$.

Finally, in the case $\eps_i\sim\Exp(\lambda)$ we have $\E[\mY-\max_ig(i/n)]\le (n\lambda h)^{-1}$ and $\Var(\tilde\theta_k)=\frac{\E[\langle M\rangle_\tau]}{(n\lambda h)^2}$. Consequently,
\[ \Var(\tilde\theta_k)\le\sum_{i\in \tilde I_k} \frac{\E[\mY+Rh^\beta -g(i/n)]}{\lambda(nh)^2}w(i/n)^2\le \frac{2Rh^\beta+(n\lambda h)^{-1}}{\lambda(nh)^2}\sum_{i\in \tilde I_k}w(i/n)^2,
\]
which by independence gives the asserted bound for $\Var(\tilde\theta_n^{block})$.
\end{proof}

\subsection{Adaptive estimation}

We now address the question of choosing the block size $h$ in a data-driven way, not assuming the regularity parameters $R$ and $\beta$ to be known. We apply Lepski's method \cite{Lepski} and treat the general regression-type model \eqref{EqRegr}. The main technical work is devoted to obtaining explicit critical values in Proposition \ref{Propcritval} of the appendix. To this end, the critical values are defined via the compensator of an exponential counting process and are thus itself again stochastic. While an explicit non-asymptotic risk analysis is clearly possible, we focus here  on the asymptotic risk, showing that by the versatility of Lepski's method  rate-optimal adaptive estimation up to logarithmic factors is indeed possible in our non-regular situation.

For a choice $n^{-1}(\log n)^2\le h_1<\ldots<h_M\le 1$ of block sizes $h_m$ with $h_m^{-1},nh_m\in\N$  consider the corresponding blockwise estimators
\[ \tilde\theta_{n,h_m}^{block}=\frac{1}{n}\sum_{k=0}^{h_m^{-1}-1}\sum_{i\in\tilde I_{k,h_m}}\Big(\y_i\wedge(\y_{k,h_m}^\ast+(nh_m)^{-1})-\lambda^{-1}{\bf 1}\Big(\y_i\le \y_{k,h_m}^\ast+(nh_m)^{-1}\Big)\Big)w(i/n),\]
where the subscript $h_m$ marks all quantities depending on the block size. Remark that the intercept $Rh_m^\beta$ in \eqref{EqTildethetak} has been replaced by the asymptotically balanced size $(nh_m)^{-1}$, which does not depend on the unknown $R$ and $\beta$. Among $(h_m)_{1\le m\le M}$ we select the block size adaptively via
\[ \hat h:=\inf\Big\{ h_m\,\Big|\, \exists m'\le m:\,\abs{\tilde\theta_{n,h_{m'}}^{block}-\tilde\theta_{n,h_{m+1}}^{block}}>\kappa_{m+1}+\kappa_{m'}\Big\}\wedge h_M
\]
with critical values ($k_i$ denotes the block $k$ with $i\in\tilde I_k$)
\[ \kappa_m=\sum_{i=1}^n\Big({\bf 1}(\y_i\le \y_{k_i,h_m}^\ast+(nh_m)^{-1})\frac{H_{\sqrt{c\log n}}(h_m^{1/2}w(i/n))}{n\lambda h_m^{1/2}}\Big)+\frac{(Cc\log n)^2\norm{w}_{1}}{(nh_m)^{2}\lambda}+\frac{\sqrt{c\log n}}{2n\lambda h_m^{1/2}}.
\]
Here, the function $H_x(y)=\frac{\log(1-2x\abs{y})}{-2x}-\abs{y}$  and the constant $C>0$ with property $\abs{\lambda z+\log(\bar F_\eps(z))}\le C^2 z^2$ for $z\in[0,\delta]$  are used and $c>0$ is specified below. Asymptotically, we have $H_x(y)\approx xy^2$ as $xy\to 0$ and $C\approx -(f_\eps'(0)+f_\eps(0)^2)$ in the case of a differentiable density $f_\eps$ of $\eps_i$ around zero (note $C=0$ for $\eps_i\sim\Exp(\lambda)$). Then the proof in the Appendix yields the following risk bounds.

\begin{theorem}\label{ThmLepski}
Assume  $g\in \cc^\beta(R)$, $\sup_x\abs{w(x)}<\infty$ and that $f_\eps/\bar F_\eps$ is bounded.
The adaptive estimator $\tilde\theta_{n}^{block}=\tilde\theta_{n,\hat h}^{block}$ satisfies with $h^\ast:=\sup\{ h_m\,|\, Rh_m^\beta\le (nh_m)^{-1}\}\vee h_1$
and for $\underline\lambda\in(0,\lambda)$, $n$ sufficiently large
\begin{align*}
\E[(\tilde\theta_{n}^{block}-\theta^{(n)})^2{\bf 1}(\hat h<h^\ast)] &\lesssim  M(n^{-c}+n^{(1-\underline\lambda c)/2}),\\ \E[(\tilde\theta_{n}^{block}-\tilde\theta_{n,h^\ast}^{block})^2{\bf 1}(\hat h\ge h^\ast)]&\lesssim \frac{(\log n)^4}{(nh^\ast)^4}+\frac{M\log n}{n^2 h^\ast}.
\end{align*}
Choosing $c>5\lambda^{-1}\vee 2$ and asymptotically $h_0\thicksim (\log n)^2n^{-1}$, $h_m\thicksim h_0q^m$ for $m=1,\ldots,M$, $q>1$, $M\thicksim \log n$, the adaptive estimator exhibits the asymptotic rate
\[ \E[(\tilde\theta_{n}^{block}-\theta^{(n)})^2] \lesssim (\log n)^2n^{-(2\beta+1)/(\beta+1)}+(\log n)^4n^{-4\beta/(\beta+1)}.\]
In particular, for $\beta\ge 1/2$ the estimator achieves the minimax optimal rate up to a logarithmic factor.
\end{theorem}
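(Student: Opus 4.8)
I would follow the standard oracle analysis of Lepski's method, splitting the risk of $\tilde\theta_n^{block}=\tilde\theta_{n,\hat h}^{block}$ along the events $\{\hat h<h^\ast\}$ and $\{\hat h\ge h^\ast\}$, where $h^\ast$ is the largest grid value at which bias and stochastic error are balanced. Write $m^\ast$, $\hat m$ for the indices with $h_{m^\ast}=h^\ast$, $h_{\hat m}=\hat h$. By Theorem \ref{ThmRegr} the bias of $\tilde\theta_{n,h_m}^{block}$ is $\lesssim(Rh_m^\beta+(nh_m)^{-1})^2\norm{w}_1$, which for $m\le m^\ast$ is of order $(n\cdot(\log n)^2 n^{-1})^{-4}$ at worst, hence negligible against $\kappa_m$. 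The event $\{\hat h<h^\ast\}$ entails a rejection between two estimators with block sizes $\le h^\ast$, which by the triangle inequality can occur only if $\abs{\tilde\theta_{n,h_j}^{block}-\E[\tilde\theta_{n,h_j}^{block}]}>\tfrac12\kappa_j$ for some $j\le m^\ast$. That this has small probability is precisely the content of Proposition \ref{Propcritval}: the stochastic $\kappa_j$ is built from the compensator of the exponential counting process so that, representing $\tilde\theta_{n,h_j}^{block}-\E[\tilde\theta_{n,h_j}^{block}]$ as a stopped martingale (as in the proof of Theorem \ref{ThmRegr}) and invoking a self-normalised exponential supermartingale bound together with optional stopping (Lemma \ref{LemStop}), one obtains a probability $\lesssim n^{-c}+n^{-\underline\lambda c}$. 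A union bound over the $M$ block sizes together with a crude $n^{O(1)}$ fourth-moment bound on $\tilde\theta_n^{block}-\theta^{(n)}$ and Cauchy--Schwarz yields the first displayed bound $M(n^{-c}+n^{(1-\underline\lambda c)/2})$.

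On $\{\hat h\ge h^\ast\}$ one has $\hat m\ge m^\ast$; the case $\hat m=m^\ast$ being trivial, assume $\hat m>m^\ast$. Since $\hat m$ is the first index at which a rejection is detected, there is none at index $\hat m-1\ge m^\ast$, which (choosing $m'=m^\ast$ in the selection rule) gives $\abs{\tilde\theta_{n,h^\ast}^{block}-\tilde\theta_{n,\hat h}^{block}}\le\kappa_{\hat m}+\kappa_{m^\ast}$. Every term of $\kappa_m$ decreases in $h_m$ — for the stochastic term one uses $H_x(y)\approx xy^2$ and that only $O(h_m^{-1})$ indices $i$ lie within $(nh_m)^{-1}$ of their block minimum, so that term is of order $\sqrt{\log n}\,(nh_m^{1/2})^{-1}$ — hence $\kappa_{\hat m}\le\kappa_{m^\ast}$, and it remains to bound $\E[\kappa_{m^\ast}^2]$, which after taking expectations is of the claimed order $\tfrac{(\log n)^4}{(nh^\ast)^4}+\tfrac{M\log n}{n^2h^\ast}$: the first term from the deterministic $(Cc\log n)^2(nh_m)^{-2}\norm{w}_1$ part, the second from the stochastic and $\sqrt{c\log n}\,(nh_m^{1/2})^{-1}$ parts after bounding the random number of active indices. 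This establishes the second displayed bound.

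To obtain the final rate, on $\{\hat h\ge h^\ast\}$ write $\tilde\theta_n^{block}-\theta^{(n)}=(\tilde\theta_n^{block}-\tilde\theta_{n,h^\ast}^{block})+(\tilde\theta_{n,h^\ast}^{block}-\theta^{(n)})$ and bound the second summand by Theorem \ref{ThmRegr} (squared bias $\lesssim(R(h^\ast)^\beta+(nh^\ast)^{-1})^4\norm{w}_1^2$, variance $\lesssim h^\ast(R(h^\ast)^\beta+(nh^\ast)^{-1})^2\norm{w}_2^2$); together with the two displayed bounds this covers both events. Inserting $h^\ast$: for $n$ large $(Rn)^{-1/(\beta+1)}$ lies strictly above the smallest grid value $\thicksim(\log n)^2n^{-1}$, so the grid definition gives $h^\ast\thicksim(Rn)^{-1/(\beta+1)}$ up to a bounded factor and $R(h^\ast)^\beta\thicksim(nh^\ast)^{-1}\thicksim R^{1/(\beta+1)}n^{-\beta/(\beta+1)}$. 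The variance-type contributions then become $(\log n)^2n^{-(2\beta+1)/(\beta+1)}$, the bias- and chaining-type contributions $(\log n)^4n^{-4\beta/(\beta+1)}$, and for $c>5\lambda^{-1}\vee2$ with $\underline\lambda$ close to $\lambda$ the $\{\hat h<h^\ast\}$ contribution is of strictly smaller polynomial order. Since $4\beta/(\beta+1)\ge(2\beta+1)/(\beta+1)$ precisely for $\beta\ge1/2$, the first term dominates in that range and the minimax rate is attained up to the $(\log n)^2$ factor.

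\textbf{Main obstacle.} The crux is Proposition \ref{Propcritval}: constructing \emph{explicit}, estimator-observable critical values $\kappa_m$ (not depending on $R,\beta$) that still dominate the fluctuations of $\tilde\theta_{n,h_m}^{block}$ with high probability. This requires controlling the predictable quadratic variation of the compensated counting-process martingale by its observable compensator and applying a self-normalised exponential inequality — the function $H_x$ arising exactly as the log-Laplace bound for the compensated jumps — while simultaneously keeping the bias uniformly negligible for $h\le h^\ast$. The remaining ingredients (the union bound, the moment estimate on $\{\hat h<h^\ast\}$, and the bookkeeping of logarithmic factors) are routine.
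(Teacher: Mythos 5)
Your proposal is correct and follows essentially the same route as the paper's proof: the same split along $\{\hat h<h^\ast\}$ and $\{\hat h\ge h^\ast\}$, the same reduction of the underselection event to the deviation bound of Proposition \ref{Propcritval} combined with a union bound over the grid and Cauchy--Schwarz against a fourth-moment bound, the same use of $\abs{\tilde\theta_{n,h^\ast}^{block}-\tilde\theta_{n,\hat h}^{block}}\le\kappa_{\hat m}+\kappa_{m^\ast}$ on the overselection event, and the same final assembly via Theorem \ref{ThmRegr} at $h^\ast\thicksim n^{-1/(\beta+1)}$; you also correctly identify Proposition \ref{Propcritval} (exponential supermartingale, optional stopping, observable compensator) as the crux. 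One small imprecision: $\kappa_{\hat m}\le\kappa_{m^\ast}$ is not a deterministic monotonicity, since the leading summand of $\kappa_m$ is a random counting term evaluated at the random index $\hat m$; the paper instead bounds $\max_{h_m\ge h^\ast}$ of these counting terms by their sum over $m$ (controlled in second moment via compensation and the boundedness of $f_\eps/\bar F_\eps$), which is exactly where the factor $M$ in the second displayed bound comes from --- your final bound carries that $M$, so your bookkeeping is consistent with this fix rather than with strict monotonicity.
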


\begin{remark}
The oracle-type block size $h^\ast$ is the largest block size among $(h_m)$ such that $\tilde\theta_{n,h_m}^{block}$ remains unbiased, except for the distribution bias induced in the case $\eps_i\not\sim\Exp(\lambda)$.
As the proof shows, in the case $\eps_i\sim\Exp(\lambda)$ not only the critical values ($C=0$), but also the bounds simplify. We obtain $\E[(\tilde\theta_{n}^{block}-\tilde\theta_{n,h^\ast}^{block})^2{\bf 1}(\hat h>h^\ast)]\lesssim \frac{M\log n}{n^2 h^\ast}$ and thus the minimax optimal rate up to a log-factor for any $\beta>0$.

More elaborate arguments in the proof could give a smaller exponent for the logarithmic factor, but it is quite likely that some logarithmic factor has  to be paid necessarily for adaptation, cf. \cit{jirakmeisterreiss} for  a related result. Similarly, the hypotheses that $w$ and $f_\eps/\bar F_\eps$ are uniformly bounded are certainly not necessary, but permit  more concise and transparent bounds.
\end{remark}

\subsection{Rate optimality}

We prove  that the rate $R^{1/(2\beta+2)}n^{-(\beta+1/2)/(\beta+1)}$ is optimal in a minimax sense over $\cc^\beta(R)$. The proof is conducted for the PPP model, the regression case with $\eps_i\sim\Exp(\lambda)$ can be treated in the same way.

\begin{theorem}\label{ThmLower}
For estimating $\theta=\int_0^1g(x)w(x)\,dx$, $w\in L^2([0,1])$,  over the parameter class $\cc^\beta(R)$, $\beta\in(0,1]$, $R>0$, the following asymptotic lower bound holds:
\[ \liminf_{n\to\infty}\inf_{\hat\theta_n}\sup_{g\in\cc^\beta(R)}R^{-1/(\beta+1)}n^{(2\beta+1)/(\beta+1)}\norm{w}_{L^2}^{-2}\E_g[(\hat\theta_n-\theta)^2]>0.\]
The infimum extends over all estimators $\hat\theta_n$ from the PPP model with intensity \eqref{EqPPP}.
\end{theorem}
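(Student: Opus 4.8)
The plan is to use the standard two-point (or, better, multi-point / Assouad-type) reduction scheme for minimax lower bounds, adapted to the PPP observation model. The key point is that a Hölder-smooth perturbation of $g$ supported on a single block of width $h$ changes the functional $\theta$ by an amount of order $h^{\beta+1}$ (times a constant depending on $R$ and on $\int w$ over that block), while the Kullback--Leibler divergence between the two PPP laws is of order $n$ times the area of the bump, i.e.\ of order $nh^{\beta+1}$. Choosing $h\thicksim(Rn)^{-1/(\beta+1)}$ makes the KL-divergence bounded and the functional gap of order $h^{\beta+1}\thicksim (Rn)^{-1}$, giving a squared error of order $(Rn)^{-2}$ per block; summing the contributions of $h^{-1}$ independent blocks yields the rate $h^{-1}(Rn)^{-2}\thicksim R^{1/(\beta+1)}n^{-(2\beta+1)/(\beta+1)}$, as claimed. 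I would carry out a many-hypotheses version to capture the $\norm{w}_{L^2}^2$ dependence correctly.

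First I would fix a smooth bump $\psi:[0,1]\to[0,\infty)$ with $\psi\in\cc^\beta(1)$, $\psi$ supported in $(0,1)$, and $\int_0^1\psi>0$; rescaling, $\psi_k(x):=h^\beta\psi((x-kh)/h)$ lies in $\cc^\beta(R)$ on the block $I_k$ provided $h^\beta\cdot h^{-\beta}=1\le R$ after the correct normalisation — more precisely one takes amplitude $c_0 R h^\beta$ for a small absolute constant $c_0$ so that each $\pm$-perturbation and, crucially, all of them simultaneously stay in $\cc^\beta(R)$ (this is where the per-block construction with disjoint supports is essential). For $\sigma=(\sigma_k)\in\{0,1\}^{h^{-1}}$ set $g_\sigma:=g_0+\sum_k\sigma_k\psi_k$ for a fixed reference $g_0$ (e.g.\ $g_0\equiv 0$ or a constant chosen so the bumps fit inside $\cc^\beta(R)$). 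Then $\theta(g_\sigma)-\theta(g_0)=\sum_k\sigma_k a_k$ with $a_k=\int_{I_k}\psi_k(x)w(x)dx$, and $|a_k|\thicksim R h^{\beta+1}\cdot(\text{local average of }w)$; squaring and summing gives $\sum_k a_k^2\thicksim R^2 h^{2\beta+2}\cdot h\cdot\norm{w}_{L^2}^2=R^2 h^{2\beta+3}\norm{w}_{L^2}^2$ up to constants.

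Next I would bound the information. The PPP on $I_k\times\R$ under $g_0$ versus under $g_0+\psi_k$ are PPPs with intensities $n\mathbf 1(y\ge g_0(x))$ and $n\mathbf 1(y\ge g_0(x)+\psi_k(x))$; their KL-divergence equals $n\int_{I_k}\psi_k(x)dx\thicksim nRh^{\beta+1}$ (the KL-divergence between two Poisson-process laws with intensities $\mu,\nu$ on a region is $\int(\nu-\mu)+\mu\log(\mu/\nu)$, here reducing to the difference of expected point counts since one intensity is zero on the symmetric-difference region, with the finite expression coming from the fact that points only ever appear above the lower of the two curves). With $h\thicksim(Rn)^{-1/(\beta+1)}$ this divergence is a bounded constant $\kappa$. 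Because the blocks are disjoint and a PPP has independent restrictions to disjoint regions, the hypotheses factorise over $k$, so I can apply Assouad's lemma: the minimax risk for $\theta$ is bounded below by $\frac14\sum_k a_k^2\cdot(1-\sqrt{\kappa/2})$ (or the analogous Fano/Assouad constant), which is $\gtrsim \sum_k a_k^2\thicksim R^2 h^{2\beta+3}\norm{w}_{L^2}^2$. Substituting $h\thicksim(Rn)^{-1/(\beta+1)}$ turns $R^2 h^{2\beta+3}$ into $R^2\cdot(Rn)^{-(2\beta+3)/(\beta+1)}=R^{-1/(\beta+1)}n^{-(2\beta+3)/(\beta+1)}$; but note $h^{2\beta+3}=h^{2\beta+2}\cdot h$ and the per-block squared gap is $(Rh^{\beta+1})^2$, so more carefully $\sum_k a_k^2\gtrsim h^{-1}(Rh^{\beta+1})^2\norm w^2=R^2 h^{2\beta+1}\norm w^2\thicksim R^2(Rn)^{-(2\beta+1)/(\beta+1)}\norm w^2=R^{1/(\beta+1)}n^{-(2\beta+1)/(\beta+1)}\norm w^2$, which matches the claimed bound exactly. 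Dividing by $R^{1/(\beta+1)}n^{-(2\beta+1)/(\beta+1)}\norm w_{L^2}^2$ leaves a positive constant, giving the $\liminf>0$.

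The main obstacle I anticipate is the bookkeeping of constants so that the claimed uniform-in-$R$ statement holds: one must verify that $g_\sigma\in\cc^\beta(R)$ for \emph{all} $\sigma$ simultaneously (not just for a single block), which forces the bump amplitude to be a fixed small multiple of $Rh^\beta$ and requires checking the Hölder seminorm across block boundaries — here choosing $\psi$ to vanish together with enough of its increments at the endpoints of $(0,1)$ makes the concatenation Hölder-$\beta$ with the right constant. A secondary technical point is making the KL computation between the two Poisson-process laws fully rigorous (absolute continuity holds because the ``larger'' intensity dominates the ``smaller'' one on the relevant region; on the block where the lower curve is raised, the law with the higher boundary is absolutely continuous w.r.t.\ the one with the lower boundary, and the density is an explicit exponential-of-count functional), but this is routine given the independent-increments structure already exploited in the proof of the first theorem. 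Finally, one should remark that the same construction with $\eps_i\sim\Exp(\lambda)$ in model \eqref{EqRegr} gives the discrete analogue, since the block minima and Poisson counts behave identically; I would state this in one line rather than repeat the argument.
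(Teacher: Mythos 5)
Your construction is essentially the one the paper uses: disjointly supported bumps of height of order $Rh^{\beta}$ on blocks of width $h\thicksim(Rn)^{-1/(\beta+1)}$, calibrated so that $n$ times the bump area stays bounded, combined with the independence of the PPP restricted to disjoint blocks. The paper finishes differently in one respect: instead of Pinsker/Assouad it places independent Bernoulli priors on the bumps, writes down the \emph{exact} posterior $\hat\eps_k$ from the explicit likelihood \eqref{EqLik}, and computes the Bayes risk in closed form as $\sum_k \E[\hat\eps_k(1-\hat\eps_k)]\,a_k^2$; this avoids any total-variation bound and gives clean constants. Your KL computation is fine in the one direction where absolute continuity holds, and since total variation is symmetric that is enough for your route. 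One caveat on your use of Assouad: for a scalar functional $\sum_k\sigma_k a_k$ with sign-changing $a_k$ the Hamming-separation hypothesis of the textbook Assouad lemma can fail (coordinates can cancel), so you should phrase the reduction as the paper does, via the product structure of the posterior, which gives $\E[\Var(\theta_\sigma\mid\text{data})]=\sum_k a_k^2\,\E[\Var(\sigma_k\mid\text{data})]$ and then a per-block two-point bound.

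The one step you treat too casually is $\sum_k a_k^2\gtrsim R^2h^{2\beta+1}\norm{w}_{L^2}^2$. For a general, possibly oscillating $w\in L^2$ the ``local average'' heuristic does not justify this: $a_k=\scapro{w}{\psi_k}$ can vanish if $w$ is orthogonal to the bump shape on $I_k$. Since the bumps are disjointly supported and hence orthogonal, $\sum_k\scapro{w}{\psi_k/\norm{\psi_k}}^2$ equals the squared norm of the projection of $w$ onto their span, and you must argue that this captures at least a fixed positive fraction of $\norm{w}_{L^2}^2$ as $h\to 0$ (for a single grid of triangular bumps the limiting fraction is $3/4$, which suffices for $\liminf>0$). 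The paper does this via a Riesz-basis argument for the tent functions together with a second, shifted grid of blocks, using density of the union of the spline spaces $V_h$ in $L^2([0,1])$. With that step supplied, your proof is complete and matches the claimed rate and $R$-dependence.
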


\begin{proof}
The proof is based on a Bayesian risk bound, which clearly provides a lower bound for the minimax risk, see \cit{korost:tsyb:book} for similar approaches.
Take an independent Bernoulli sequence $(\eps_k)$, i.e. $P(\eps_k=1)=p$, $P(\eps_k=0)=1-p$ with $p\in(0,1)$, and set for a triangular kernel $K(y)=2\min(y,1-y){\bf  1}_{[0,1]}(y)$
\[ g(x)=\sum_{k=0}^{h^{-1}-1}\eps_kg_k(x)\text{ with }g_k(x)=cR h^\beta K((x-kh)/h),\]
where $h\in(0,1)$ with $h^{-1}\in\N$ will be chosen later. Then for $c>0$ sufficiently small, we have $g\in C^\beta(R)$ for all $h$ and all realisations of $(\eps_k)$. We interpret this specification of $g$ as a prior on $\cc^\beta(R)$ and we shall make use of the independence of prior as well as the observation laws on different blocks $I_k=[kh,(k+1)h)$. For each $k$ we obtain from the Bayes formula the posterior probability given the observations of the PPP in interval $I_k$ (cf. the likelihood derivation in \eqref{EqLik} below)
\[ \hat\eps_k:=P(\eps_k=1\,|\,(X_i,Y_i)_{i\ge 1})=\frac{p e^{n\int g_k}{\bf 1}(\forall X_i\in I_k: Y_i\ge g_k(X_i))}{1-p+pe^{n\int g_k}}.
\]
Using that $\eps_k$ are 0-1-valued, we have $\hat\eps_k=\E[\eps_k\,|\,(X_i,Y_i)_{i\ge 1}]$ and
$\Var(\eps_k\,|\,(X_i,Y_i)_{i\ge 1})=\hat\eps_k(1-\hat\eps_k)$. Therefore
 the Bayes-optimal estimator of $\theta$ under squared loss is given by the posterior mean
\[ \hat\theta=\sum_{k=0}^{h^{-1}-1}\hat\eps_k\int_{I_k}g_k(x)w(x)\,dx.\]
Using independence and $\E[\hat\eps_k-\eps_k]=0$, its Bayes risk is calculated as
\begin{align*}
\E[(\hat\theta-\theta)^2] &=\sum_{k=0}^{h^{-1}-1}\Var(\hat\eps_k-\eps_k)\Big(\int_{I_k}g_k(x)w(x)\,dx\Big)^2\\
&= \sum_{k=0}^{h^{-1}-1}\E\Big[\Var\Big(\eps_k\,\Big|\,(X_i,Y_i)_{i\ge 1}\Big)\Big]\Big(\int_{I_k}g_k(x)w(x)\,dx\Big)^2\\
&= \sum_{k=0}^{h^{-1}-1}\frac{p(1-p)}{(1-p+pe^{n\int g_k})^2}\Big(\int_{I_k}g_k(x)w(x)\,dx\Big)^2.
\end{align*}
We now choose $h=\ceil{(cRn)^{1/(\beta+1)}}^{-1}$ such that $n\int_{I_k}g_k(x)\,dx=cR h^{\beta+1} n\le 1$
holds. Then the Bayes risk is bounded in order by
\[ \E[(\hat\theta-\theta)^2]\gtrsim R^2h^{2\beta+1}\sum_{k=0}^{h^{-1}-1}\Big(\int_{I_k}\frac{K((x-kh)/h)}{\norm{K((\cdot-kh)/h)}_{L^2}}w(x)\,dx\Big)^2.\]
The same argument over the shifted blocks $I_k'=[(k+1/2)h,(k+3/2)h)$ implies that the minimax risk is bounded by the maximum (and thus the average) over the respective Bayes risks:
\[ \inf_{\hat\theta_n}\sup_{g\in\cc^\beta(R)}\E_g[(\hat\theta_n-\theta)^2]\gtrsim R^2h^{2\beta+1}\sum_{k=0}^{2h^{-1}-2}\Big(\int_{I_k}\frac{K((x-kh/2)/h)}{\norm{K((\cdot-kh/2)/h)}_{L^2}}w(x)\,dx\Big)^2.\]

The tent functions $K((x-kh/2)/h)$, $k=0,1,\ldots,2h^{-1}-2$ form a Riesz basis for their linear span $V_h$ (see e.g. Example 2.1 in \cit{Wojtaszczyk}), which is the space of all linear splines with knots at $kh/2$, vanishing at the boundary. This means that  the sum in the last display is larger than a constant times the $L^2([0,1])$-norm of the orthogonal projection of $w$ onto $V_h$. As $\bigcup_{h>0}V_h$ is dense in $L^2([0,1])$, the $L^2$-norm of the projections of $w$ onto $V_h$ converges for $h\to 0$ to the $L^2$-norm of $w$. Insertion of $h\thicksim (Rn)^{-1/(\beta+1)}$ yields the desired lower bound rate $R^{1/(\beta+1)}n^{-(2\beta+1)/(\beta+1)}\norm{w}_{L^2}^2$.
\end{proof}

\section{Nonparametric Maximum-Likelihood}

\subsection{The MLE over $\cc^{\beta}(R)$}

Let us study the nonparametric maximum-likelihood estimator (MLE) in the class $\cc^{\beta}(R)$. Denote by $P_g$ the law of the observations in the PPP model with intensity $\lambda_g(x,y)=n{\bf 1}(y\ge g(x))$. Then  for $g\ge g_0$ by  Thm. 1.3 in \cit{Kutoyants} and the fact that the PPP intensities coincide outside the compact set $[0,1]\times[\min g_0,\max g]$ we obtain  the Radon-Nikodym-derivative
\[ \frac{dP_g}{dP_{g_0}}=\exp\Big(n\int_0^1(g-g_0)(x)\,dx\Big){\bf 1}\Big(\forall i:Y_i\ge g(X_i)\Big).\]
A simple probability measure $P_0$ dominating all $P_g$, $g\in \cc^\beta(R)$ (where $g$ need not be bounded from below), is given by the PPP model with intensity $\lambda_0(x,y)=n(e^y\wedge 1)$ and yields again via  Thm. 1.3 in \cit{Kutoyants} the likelihood
\begin{align}
&{\cal L}(g) = \frac{dP_g}{dP_0}\nonumber\\
&= \Big(\prod_{j\ge 1}\frac{n{\bf 1}(Y_j\ge g(X_j))}{n(e^{Y_j}\wedge 1)}\Big)\exp\Big(-n\int_0^1\int_{-\infty}^\infty ({\bf 1}(y\ge g(x))-e^y\wedge 1)\,dy\,dx\Big)\nonumber\\
&=\Big(\prod_{j\ge 1} e^{(-Y_j)_+}{\bf 1}(Y_j\ge g(X_j))\Big)\exp\Big(-n\int_0^1(-1-g(x))\,dx\Big)\nonumber\\
&=\exp\Big(n+\sum_{j\ge 1} (-Y_j)_+\Big)\exp\Big(n\int_0^1g(x)\,dx\Big){\bf 1}\Big(\forall j\ge 1:\:Y_j\ge g(X_j)\Big).\label{EqLik}
\end{align}
The first factor is independent of $g$ and we obtain thus the same structure as under $P_{g_0}$ above.
The MLE over $\cc^{\beta}(R)$ is the function $\hat g$ that maximizes $\int_0^1g$ over all $g\in \cc^{\beta}(R)$ with $g(X_j)\le Y_j$ for all $j$. We can write explicitly
\[ \hat g^{MLE}(x)=\min_{j\ge 1}\Big(Y_j+R\abs{x-X_j}^\beta\Big),\]
since the right-hand side even maximises $g(x)$ pointwise over the considered class of $g$, see also Figure 2.
The corresponding likelihood (with respect to $n$-dimensional Lebesgue measure) in the regression-type model \eqref{EqRegr} with $\eps_i\sim \Exp(\lambda)$ i.i.d. is given by
\[ {\cal L}^{regr}(g)
=\lambda^n \exp\Big(-\lambda\sum_{i=1}^n \y_i\Big)\exp\Big(\lambda\sum_{i=1}^n g(i/n)\Big){\bf 1}\Big(\forall i=1,\ldots,n:\;\y_i\ge g(i/n)\Big).
\]
The maximum-likelihood estimator over $\cc^\beta(R)$ is then similarly given by
\begin{equation}\label{EqMLERegr}
 \hat g^{MLE-regr}(x)=\min_{i=1,\ldots,n}\Big(\y_i+R\abs{x-i/n}^\beta\Big), \quad x\in[0,1],
 \end{equation}
see Figure \ref{fig-est-mle} for an illustration of the two constructions of the MLE. They are both quickly determined numerically. In the sequel, we shall focus on the MLE in the PPP model and only hint at the parallel theory for the regression-type model under exponential noise. We abstain from analysing the exponential MLE under non-exponential noise in the regression-type model because the results must be asymptotic in nature and will be comparable to Theorem \ref{ThmRegr}.

\begin{figure}[t]\begin{center}
\begin{minipage}[t]{0.49\textwidth}
\includegraphics[width=\textwidth]{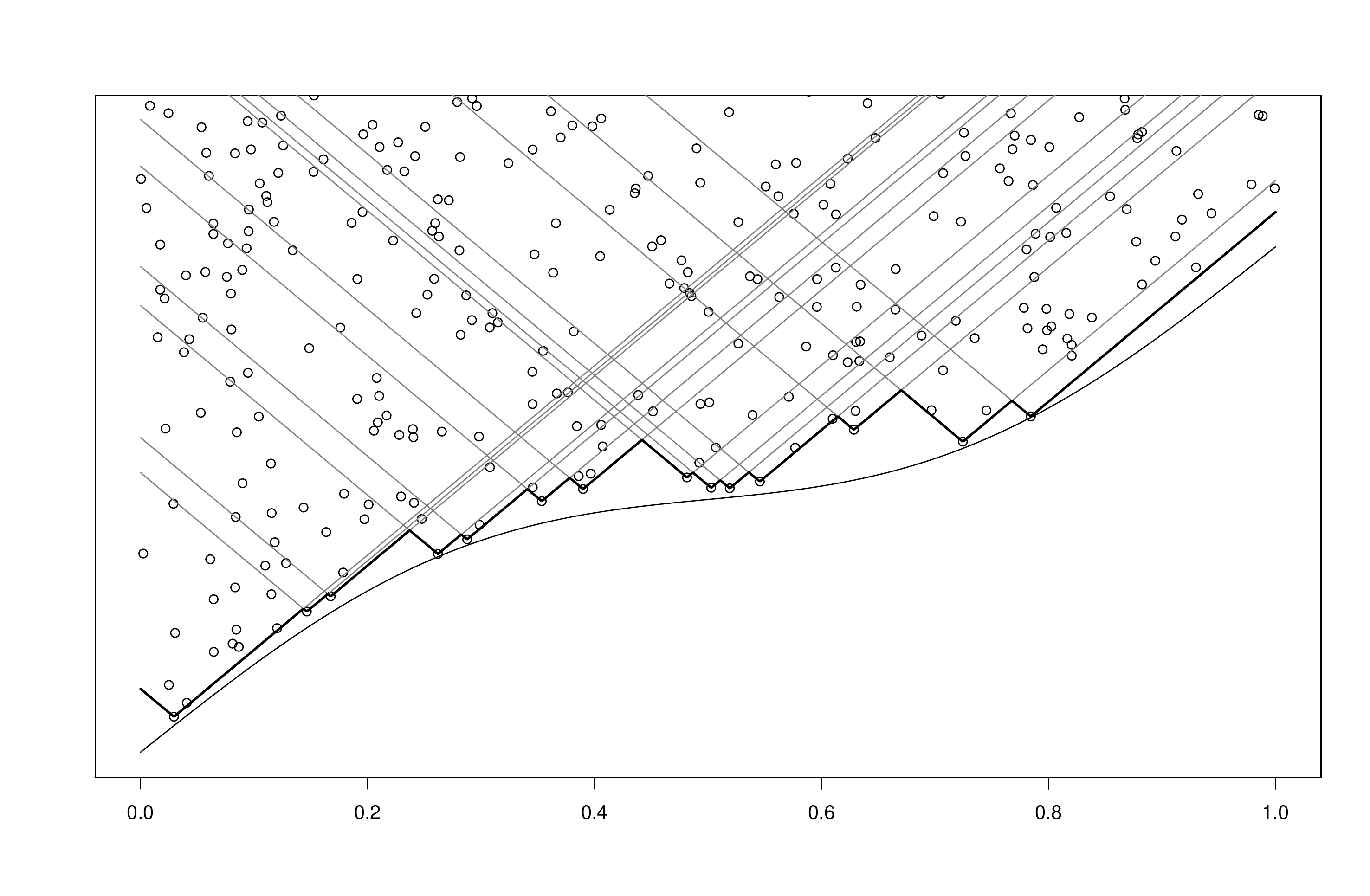}
\end{minipage}
\begin{minipage}[t]{0.49\textwidth}
\includegraphics[width=\textwidth]{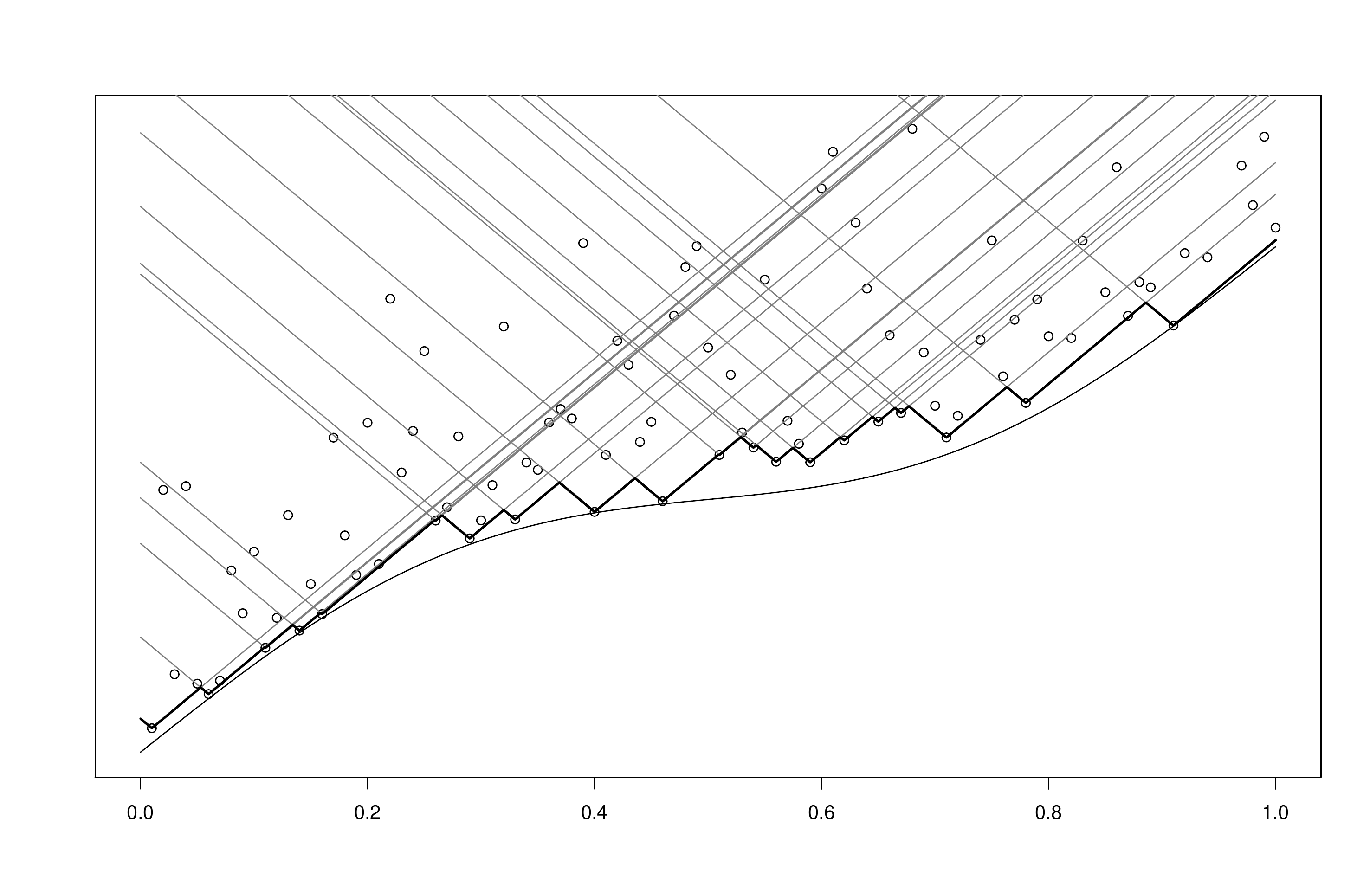}
\end{minipage}\end{center}\vspace{-10mm}

\caption{Construction of $\hat{g}^{MLE}$ in the PPP model ($n=100$, left) and in the regression-type model ($n=100$, $\eps_i\sim\Exp(1)$, right) for $\beta=1$. Thin lines indicate $x\mapsto Y_j+R\abs{x-X_j}$ at observations on the graph of $\hat{g}^{MLE}$.}\label{fig-est-mle}\end{figure}

\begin{proposition}\label{PropSuff1}
The nonparametric MLE $(\hat g^{MLE}(x),x\in[0,1])$ is a sufficient and complete statistics for $\cc^\beta(R)$.
\end{proposition}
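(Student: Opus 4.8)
The plan is to identify the law of the observations with an exponential family indexed by $g$, then read off sufficiency from the factorisation theorem and completeness from the structure of this family together with the explicit form of $\hat g^{MLE}$. From the likelihood computation \eqref{EqLik} we have, up to the $g$-free factor $\exp(n+\sum_{j}(-Y_j)_+)$,
\[
{\cal L}(g)=\exp\Big(n\int_0^1 g(x)\,dx\Big){\bf 1}\Big(\forall j\ge 1:\:Y_j\ge g(X_j)\Big).
\]
The first step is to observe that this depends on the data only through the event $\{\forall j:\:Y_j\ge g(X_j)\}$, and that this event is a deterministic function of $\hat g^{MLE}$: since $\hat g^{MLE}(x)=\min_{j}(Y_j+R|x-X_j|^\beta)$ is the pointwise-largest element of $\cc^\beta(R)$ lying below all observed points, one has $Y_j\ge g(X_j)$ for all $j$ if and only if $g\le \hat g^{MLE}$ pointwise on $[0,1]$. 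Hence ${\cal L}(g)={\cal L}(g;\hat g^{MLE})$ factors through the statistic $\hat g^{MLE}$, and the Fisher--Neyman factorisation theorem gives sufficiency.

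For completeness I would argue as follows. Suppose $\Phi$ is a measurable functional of $\hat g^{MLE}$ with $\E_g[\Phi(\hat g^{MLE})]=0$ for every $g\in\cc^\beta(R)$. Writing this expectation as an integral against $P_0$ and inserting the likelihood,
\[
\E_0\Big[\Phi(\hat g^{MLE})\exp\Big(n\int_0^1 g\Big){\bf 1}(g\le \hat g^{MLE})\Big]=0\qquad\text{for all }g\in\cc^\beta(R).
\]
The scalar factor $\exp(n\int g)$ pulls out, so this says $\E_0[\Phi(\hat g^{MLE}){\bf 1}(g\le\hat g^{MLE})]=0$ for all admissible $g$. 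Now vary $g$ over the class: for $g,g'\in\cc^\beta(R)$ with $g\le g'$ the indicators are nested, and taking differences along a rich enough family of $g$'s that "sweep out" the cone $\{g\le\hat g^{MLE}\}$ from below forces the $P_0$-distribution of $\hat g^{MLE}$, weighted by $\Phi$, to vanish on every set of the form $\{\hat g^{MLE}\ge g\}$. Since the collection $\{\{\hat g^{MLE}\ge g\}:g\in\cc^\beta(R)\}$ is closed under finite intersections and generates the law of $\hat g^{MLE}$ (any realisation of $\hat g^{MLE}$ is itself an element of $\cc^\beta(R)$, being a minimum of the generators $Y_j+R|\cdot-X_j|^\beta$, each of which has the $\beta$-Hölder modulus $R$), a monotone-class / $\pi$-$\lambda$ argument upgrades this to $\Phi(\hat g^{MLE})=0$ $P_0$-a.s., hence $P_g$-a.s.\ for all $g$. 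The parallel statement in the regression-type model with $\eps_i\sim\Exp(\lambda)$ is identical, using ${\cal L}^{regr}(g)\propto\exp(\lambda\sum_i g(i/n)){\bf 1}(\forall i:\y_i\ge g(i/n))$ and $\hat g^{MLE-regr}$ from \eqref{EqMLERegr}, with the finite index set $\{1,\dots,n\}$ in place of the PPP.

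The sufficiency half is essentially immediate from \eqref{EqLik}; the real work is in the completeness argument, and the delicate point there is making precise the claim that the events $\{\hat g^{MLE}\ge g\}$, $g\in\cc^\beta(R)$, separate points finely enough to be a determining class for the law of the random function $\hat g^{MLE}$ under $P_0$. One clean way is to exploit that $\hat g^{MLE}$ is determined by its values on a countable dense set of $x$'s together with the Hölder constraint, reducing the infinite-dimensional statement to a countable family of finite-dimensional distributions, and then to use that the cone order $\{g\le\cdot\}$ indexed by all of $\cc^\beta(R)$ (in particular by the generators $Y+R|\cdot-x|^\beta$ themselves) already pins down those finite-dimensional laws. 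I expect this to be the main obstacle; once it is set up, completeness follows by the standard "zero expectation against a generating $\pi$-system" principle, exactly as in the classical derivation that the sample minimum is complete for a one-parameter shifted-exponential family.
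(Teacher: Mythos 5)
Your approach is the same as the paper's: sufficiency via the Neyman factorisation of \eqref{EqLik} through the event $\{g\le\hat g^{MLE}\}$, and completeness via a uniqueness-of-measures argument over the $\cap$-stable family of upper brackets $[g,\infty):=\{h\in\cc^\beta(R)\,:\,h\ge g\}$. The sufficiency half is complete as written.

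The step you yourself flag as ``the main obstacle'' --- that the brackets are a determining class for the law of $\hat g^{MLE}$ --- is indeed the only substantive point of the completeness proof, and you leave it open; it closes in one line using exactly the objects you name. Equip $\cc^\beta(R)$ with the Borel $\sigma$-algebra of the \emph{uniform} norm (the $\cc^\beta$-norm topology is not separable), which is generated by the point evaluations; then the H\"older constraint gives, for any $x_0\in[0,1]$ and $y_0\in\R$, the identity $\{h\in\cc^\beta(R)\,:\,h(x_0)\ge y_0\}=[y_0-R\abs{\cdot-x_0}^\beta,\infty)$, i.e.\ every point-evaluation half-space is itself a bracket. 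No reduction to countable dense sets or to finite-dimensional distributions is needed. Two smaller remarks: (i) your displayed identity under $P_0$ drops the data-dependent factor $e^{\sum_{j\ge1}(-Y_j)_+}$ from the likelihood; it is not a function of $\hat g^{MLE}$ and must be carried along inside the expectation, but being strictly positive it does not disturb the conclusion $\Phi(\hat g^{MLE})=0$ a.s.; (ii) the sentence about ``taking differences along a family that sweeps out the cone'' is superfluous --- the identity $\E_{0}[\Phi(\hat g^{MLE})e^{\sum_{j\ge1}(-Y_j)_+}{\bf 1}(\hat g^{MLE}\in[g,\infty))]=0$ already holds verbatim for every $g$, and after splitting $\Phi=\Phi^+-\Phi^-$ the standard $\pi$-$\lambda$ uniqueness theorem for the two resulting finite measures finishes the argument, exactly as you indicate at the end.
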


\begin{proof}
By definition of $\hat g^{MLE}$ the likelihood \eqref{EqLik} can be written as
\[{\cal L}(g)=\exp\Big(n+\sum_{j\ge 1} (-Y_j)_+\Big)\exp\Big(n\int_0^1g(x)\,dx\Big){\bf 1}\Big(g\le\hat g^{MLE}\Big)\]
such that by Neyman's factorisation criterion (e.g. \cit{lehmann}) $\hat g^{MLE}$ is a sufficient statistics for this parameter class.

Let us  remark that by definition $\hat g^{MLE}$ is an element of $\cc^\beta(R)$. Since $\cc^\beta(R)$, equipped with its $\cc^\beta$-norm, is not separable, we equip it with the Borel $\sigma$-algebra generated by the uniform (supremum) norm, which is generated by all point evaluations.
Measurability of the estimator $\hat g^{MLE}$ is then easily established since all point evaluations $\hat g^{MLE}(x)$, $x\in[0,1]$, are measurable as minima of countably many random variables.

For completeness we now consider any statistic $T:\cc^\beta(R)\to\R$ satisfying $\E_g[T(\hat g^{MLE})]=0$ for all $g\in\cc^\beta(R)$, which is Borel measurable with respect to the uniform norm. For $g\in \cc^\beta(R)$ denote by $[g,\infty):=\{ h\in \cc^\beta(R)\,|\, h\ge g\}$ the 'bracket' between $g$ and $\infty$, that is all functions whose graphs lie above $g$. Noting $[g,\infty)\cap [h,\infty)=[g\vee h,\infty)$ where the maximum $g\vee h$ is again in $\cc^\beta(R)$, the family $\{[g,\infty)\,|\,g\in \cc^\beta(R)\}$ is an $\cap$-stable generator of the uniform Borel $\sigma$-algebra in $\cc^\beta(R)$:
for any $x_0\in[0,1]$, $y_0\in\R$ we have $\{h\in\cc^\beta(R)\,|\,h(x_0)\ge y_0\}=[y_0-R\abs{\cdot-x_0}^\beta,\infty)$ by the H\"older condition and $\{[y_0,\infty)\,|\,y_0\in\R\}$ generates the Borel $\sigma$-algebra on $\R$.

From $\E_{g}[T(\hat g^{MLE})]=0$ we obtain by using the likelihood under $P_0$
\[ e^{n\int (g+1)}\E_{0}\Big[T(\hat g^{MLE})e^{\sum_{j\ge 1} (-Y_j)_+}{\bf 1}(\hat g^{MLE}\in[g,\infty))\Big]=0.\]
Splitting $T=T^+-T^-$ with non-negative $T^+,T^-$, we infer that the measures $B\mapsto \E_{0}[T^\pm(\hat g^{MLE})e^{\sum_{j\ge 1} (-Y_j)_+}{\bf 1}(\hat g^{MLE}\in B)]$ agree on $\{[g,\infty)\,|\,g\in \cc^\beta(R)\}$ and thus by the uniqueness theorem for all uniform Borel sets $B$ in $\cc^\beta(R)$, in particular for $B=\{T>0\}$ and $B=\{T<0\}$. This implies $T^+(\hat g^{MLE})e^{\sum_{j\ge 1} (-Y_j)_+}=T^-(\hat g^{MLE})e^{\sum_{j\ge 1} (-Y_j)_+}$ $P_{0}$-a.s. and thus $T(\hat g^{MLE})=0$ $P_g$-a.s. for all $g\in \cc^\beta(R)$.
\end{proof}

In analogy with the block-wise estimator $\hte^{block}$ we set
\[ \hte^{MLE}:=\int_0^1 \hat g^{MLE}(x)w(x)\,dx-\frac1n\sum_{j\ge 1}{\bf 1}\big(\hat g^{MLE}(X_j)=Y_j\big)w(X_j) .\]
This means that $\hte^{MLE}$ is obtained by a plug-in of the nonparametric MLE $\hat g^{MLE}$ into the functional minus a bias correction which counts the relative number of observations on the graph of $\hat g^{MLE}$. The striking result is that this estimator is not only unbiased, but even uniformly of minimum variance among all unbiased estimators for the class $\cc^{\beta}(R)$ (UMVU).

\begin{theorem}\label{TheoMLE1}
The estimator $\hte^{MLE}$ is for each finite sample size $n$ UMVU over the class $\cc^{\beta}(R)$ with
\begin{align*}
&\Var(\hte^{MLE})=\frac1{n}\int_0^1\E[\hat g^{MLE}(x)-g(x)]w(x)^2\,dx\\
&\le \Big(\Gamma(\beta/(\beta+1))\beta(2R/(\beta+1))^{1/(\beta+1)}n^{-(2\beta+1)/(\beta+1)}+\frac1{n^2} e^{-2\beta Rn/(\beta+1)}\Big)\norm{w}_{L^2}^2.
\end{align*}
For $n\to\infty$ we obtain
\[ \Var(\hte^{MLE})\le (2+o(1)) R^{1/(\beta+1)}n^{-(2\beta+1)/(\beta+1)}\norm{w}_{L^2}^2.\]
\end{theorem}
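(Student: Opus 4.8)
\emph{Strategy.} The plan is to redo the block‑wise computation ``in one shot'', with the deterministic upper bound replaced by the random envelope $\hat g^{MLE}$, and then to upgrade unbiasedness to the UMVU property via Proposition~\ref{PropSuff1}. Set $\hat S:=\{(x,y)\in[0,1]\times\R:\,g(x)\le y\le\hat g^{MLE}(x)\}$, the region between $g$ and the MLE. Two elementary observations organise everything. First, every observation satisfies $\hat g^{MLE}(X_j)\le Y_j$ (the tent through $(X_j,Y_j)$ equals $Y_j$ at $X_j$), so no observation lies strictly below $\hat g^{MLE}$ and the observations lying in $\hat S$ are precisely the ``active'' ones, those on the graph of $\hat g^{MLE}$. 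Second, using the triangle inequality $\abs{a+b}^\beta\le\abs a^\beta+\abs b^\beta$ (valid for $\beta\le1$) one checks that $\hat g^{MLE}$ is already the lower envelope of the tents $Y_j+R\abs{\cdot-X_j}^\beta$ based at the active observations alone, so that $\hat S$ depends only on the observations it contains, i.e.\ $\hat S$ is a \emph{stopping set} for the Poisson process. Writing $N^w(A):=\sum_{j\ge1}\mathbf 1((X_j,Y_j)\in A)w(X_j)$, $\mu_w(A):=n\int\!\!\int_A w(x)\,dx\,dy$ and $M(A):=N^w(A)-\mu_w(A)$ for the compensated $w$‑weighted Poisson random measure, the first observation gives
\[\hte^{MLE}-\theta=\frac1n\Big(\mu_w(\hat S)-N^w(\hat S)\Big)=-\frac1n\,M(\hat S).\]

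\emph{Unbiasedness and the variance identity.} For deterministic $A\subset\{y\ge g\}$ one has $\E[M(A)]=0$ and $\Var(M(A))=\mu_{w^2}(A)$; the point is that both survive when $A$ is the stopping set $\hat S$. By the optional‑sampling principle for the Poisson process applied at $\hat S$ (a set‑indexed analogue of Lemma~\ref{LemStop}, integrability granted by the tail bound below) we get $\E[M(\hat S)]=0$, hence $\E[\hte^{MLE}]=\theta$, and $\E[M(\hat S)^2]=\E[\mu_{w^2}(\hat S)]$, hence, using $\mu_{w^2}(\hat S)=n\int_0^1 w(x)^2(\hat g^{MLE}(x)-g(x))\,dx$,
\[\Var(\hte^{MLE})=\frac1{n^2}\E[\mu_{w^2}(\hat S)]=\frac1n\int_0^1\E[\hat g^{MLE}(x)-g(x)]\,w(x)^2\,dx.\]
Alternatively, and avoiding any stopping‑set machinery, the two identities follow from first‑ and second‑order Palm calculus (Mecke's formula, \cit{karr}): adding one or two points shows that $\E[N^w(\hat S)]$, $\E[N^w(\hat S)^2]$, $\E[N^w(\hat S)\mu_w(\hat S)]$ and $\E[\mu_w(\hat S)^2]$ all reduce to integrals of $w,w^2$ over $\hat S$, the ``both points active'' constraint being exactly $\abs{y-y'}\le R\abs{x-x'}^\beta$; the elementary identity $\mathbf 1(y'\le y+t)+\mathbf 1(y\le y'+t)-\mathbf 1(\abs{y-y'}\le t)=1$ for $t\ge0$ then makes the cross terms cancel and leaves $\E[M(\hat S)^2]=\E[\mu_{w^2}(\hat S)]$.

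\emph{UMVU.} By Proposition~\ref{PropSuff1}, $\hat g^{MLE}$ is complete and sufficient, and $\hte^{MLE}$ is a measurable functional of $\hat g^{MLE}$ only: the plug‑in term obviously is, and the correction term counts the active observations, which are a.s.\ recovered from $\hat g^{MLE}$ as its ``downward'' local minima (a cusp for $\beta<1$, a $V$‑corner for $\beta=1$), since a.s.\ the active tent strictly dominates its competitors in a neighbourhood of the contact point. Being an unbiased function of a complete sufficient statistic, $\hte^{MLE}$ is UMVU over $\cc^\beta(R)$ by the Lehmann--Scheffé theorem (\cit{lehmann}).

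\emph{Variance bound, asymptotics, and the hard part.} It remains to bound $\E[\hat g^{MLE}(x)-g(x)]$ uniformly in $x\in[0,1]$, $g\in\cc^\beta(R)$, $R,\beta$. Passing to the tilted ordinate, the points $(X_j,Y_j+R\abs{x-X_j}^\beta)$ form a PPP with floor $\tilde g_x(u)=g(u)+R\abs{x-u}^\beta\ge g(x)$ whose minimal ordinate is $\hat g^{MLE}(x)$, so $P(\hat g^{MLE}(x)-g(x)>s)=\exp(-n\int_0^1(g(x)+s-\tilde g_x(u))_+\,du)$ and
\[\E[\hat g^{MLE}(x)-g(x)]=\int_0^\infty\exp\!\Big(-n\!\int_0^1\big(g(x)+s-\tilde g_x(u)\big)_+\,du\Big)\,ds.\]
The Hölder bound $g(x)-g(u)\ge-R\abs{x-u}^\beta$ gives $(g(x)+s-\tilde g_x(u))_+\ge(s-2R\abs{x-u}^\beta)_+$; splitting the $s$‑integral at an $R$‑dependent threshold, the substitution $v=c\,n\,s^{(\beta+1)/\beta}$ turns the lower range into a $\Gamma\!\big(\tfrac{\beta}{\beta+1}\big)$ integral and yields the main term, while on the upper range the exponent is at least affine in $s$ and produces the exponentially small remainder $\lesssim n^{-1}e^{-2\beta Rn/(\beta+1)}$; inserting into the variance identity and bounding $\int_0^1 w^2\le\norm w_{L^2}^2$ gives the explicit estimate, and for $n\to\infty$ the factor $\Gamma(\tfrac{\beta}{\beta+1})\beta(\tfrac{2}{\beta+1})^{1/(\beta+1)}$ is at most $2$ (value $\sqrt\pi$ at $\beta=1$, limit $2$ as $\beta\downarrow0$, by a one‑variable estimate) with the remainder of smaller order. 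The genuinely delicate step is the variance identity: one must verify that $\hat S$ really is a stopping set, so that optional sampling applies to $M(\hat S)$ \emph{and} to its predictable quadratic variation -- equivalently, in the Palm‑calculus route, that all cross terms cancel. This is where the point process geometry enters: the combinatorics of which observations lie on the graph of $\hat g^{MLE}$, together with the triangle inequality for $\abs{\cdot}^\beta$, are exactly what make $\hat S$ a function of the observations it contains. The remaining ingredients -- sufficiency and completeness (already established), Lehmann--Scheffé, and the Laplace‑type asymptotics -- are routine.
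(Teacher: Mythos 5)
Your proposal is correct, and the skeleton is the same as the paper's: both proofs hinge on the identity $\hte^{MLE}-\theta=-\frac1n M(\hat S)$, where $M$ is the compensated ($w$-weighted) Poisson measure and $\hat S$ is the region between $g$ and $\hat g^{MLE}$, followed by the two moment identities $\E[M(\hat S)]=0$ and $\E[M(\hat S)^2]=\E[\mu_{w^2}(\hat S)]$, Lehmann--Scheff\'e, and the survival-function integral for $\E[\hat g^{MLE}(x)-g(x)]$. Where you differ is in how the moment identities are justified. The paper parametrises the problem by a one-dimensional time (revealing the PPP from below through the filtration ${\cal F}_t$ of \eqref{EqFt}), exhibits $\bar M(t)$ as a compensated counting process with explicit predictable quadratic variation, and applies Lemma \ref{LemStop} at $\tau=\infty$; this stays entirely within standard martingale theory. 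Your primary route invokes optional sampling for the \emph{stopping set} $\hat S$, which as stated leans on set-indexed machinery the paper never develops and which is not off-the-shelf; the geometric observation underpinning it (that $\hat g^{MLE}$ is the envelope of the tents at the active points alone, via subadditivity of $t\mapsto t^\beta$, so $\hat S$ is determined by the points it contains) is correct but would need a citation or a proof of the corresponding Wald identities. Your secondary route via the first- and second-order Mecke formulae is, however, a complete and genuinely different argument: I checked that adding one or two points to the configuration changes membership in $\hat S$ exactly by the constraints $y\le y'+R\abs{x-x'}^\beta$ resp.\ $\abs{y-y'}\le R\abs{x-x'}^\beta$, and your elementary indicator identity then makes all off-diagonal terms cancel, yielding $\E[M(\hat S)^2]=\E[\mu_{w^2}(\hat S)]$ without any filtration at all. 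This buys a purely static, purely Poisson-geometric proof at the cost of a second-order Palm computation; the paper's martingale route buys, in addition, the $L^2$-martingale structure that is reused later for the CLT. A small bonus of your write-up: you explicitly verify that the bias-correction term $\frac1n\sum_j{\bf 1}(\hat g^{MLE}(X_j)=Y_j)w(X_j)$ is a measurable function of $\hat g^{MLE}$ alone (active points are the downward cusps/corners of the envelope), a point needed for Lehmann--Scheff\'e that the paper passes over silently. The Gamma-integral asymptotics at the end are the same as \eqref{EqgMLE}--\eqref{EqExpBound}.
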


\begin{proof}
Let us define another weighted counting process
\[\bar N(t)=\sum_{j\ge 1}{\bf 1}\Big(Y_j\le t\wedge \min_{i\ge 1}(Y_i+R\abs{X_j-X_i}^\beta)\Big)w(X_j), \quad t\in\R.\]
Note  $\{\min_{i\ge 1}(Y_i+R\abs{X_j-X_i}^\beta)<t\}=\{\min_{i:Y_i<t}(Y_i+R\abs{X_j-X_i}^\beta)<t\}$ for each $t$ and that the pure ($w=1$) counting process has  stochastic intensity
\[\bar\lambda_t=n\int_0^1\int_{[g(x),t]}{\bf 1}\Big(\min_{i:Y_i< s}(Y_i+R\abs{x-X_i}^\beta)\ge s\Big)\,dsdx.\]
Consequently, $\bar N$ is  adapted to $({\cal F}_t)$ from \eqref{EqFt} and we obtain  by compensation (cf. Prop.  2.32 in \cit{karr}) the $({\cal F}_t)$-martingale
\[ \bar M(t)=\bar N(t)-n\int_0^1\int_{[g(x),t]}{\bf 1}\Big(\min_{i:Y_i\le s}(Y_i+R\abs{x-X_i}^\beta)\ge s\Big)\,ds\,w(x)\,dx.\]

The main observation is the identity
\begin{align*}
\lim_{t\to\infty} (\bar N(t)-\bar M(t)) &= n\int_0^1\int_{g(x)}^\infty {\bf 1}\Big(\min_{i\ge 1}(Y_i+R\abs{x-X_i}^\beta)\ge s\Big)\,ds\,w(x)\,dx\\
 &= n\int_0^1\int_{g(x)}^\infty {\bf 1}(\hat g^{MLE}(x)\ge s)\,ds\,w(x)\,dx\\
&=n\int_0^1 (\hat g^{MLE}(x)-g(x))w(x)\,dx,
\end{align*}
which tells us that
\[\bar N(\infty):=\lim_{t\to\infty} \bar N(t)=\sum_{j\ge 1}{\bf 1}(\hat g^{MLE}(X_j)\ge Y_j)w(X_j)=\sum_{j\ge 1}{\bf 1}(\hat g^{MLE}(X_j)=Y_j)w(X_j)\]
simultaneously counts the weighted number of points $(X_j,Y_j)$ on the graph of $\hat g^{MLE}$ and
 equals the scaled bias $n(\int_0^1 \hat g^{MLE}(x)w(x)dx-\theta)$ up to a martingale term. We thus have
$\hte^{MLE}=\int_0^1 \hat g^{MLE}(x)w(x)\,dx-\frac1n \bar N(\infty)=\theta-\frac1n \bar M(\infty)$
where
\[ \bar M(\infty)=\sum_j{\bf 1}\Big(Y_j\le\hat g^{MLE}(x)\Big)w(X_j)-\int_0^1 (\hat g^{MLE}(x)-g(x))w(x)\,dx\]
 is the a.s. and $L^2$-limit of the $L^2$-bounded martingale $\bar M$ with
\begin{align*}
\langle \bar M\rangle_t &= n\int_0^1\int_{[g(x),t]}{\bf 1}\Big(\min_{i:Y_i< s}(Y_i+R\abs{x-X_i}^\beta)\ge s\Big)\,ds\,w(x)^2dx\\
& \uparrow n
\int_0^1(\hat g^{MLE}(x)-g(x))w(x)^2\,dx=:\langle\bar M\rangle_\infty\text{ as $t\uparrow \infty$.}
\end{align*}
We obtain from $\E[\hte^{MLE}-\theta]=\frac1n\E[-\bar M(\infty)]$, $\Var(\hte^{MLE})=\frac1{n^2}\Var(\bar M(\infty))$ the result (use Lemma \ref{LemStop} with $\tau=\infty$)
\[ \E[\hte^{MLE}]=\theta\text{ and } \Var(\hte^{MLE})=\frac1{n^2}E[\langle\bar M\rangle_\infty]=\frac1{n}\int_0^1\E[\hat g^{MLE}(x)-g(x)]w(x)^2\,dx.
\]
Hence, $\hte^{MLE}$ is an unbiased estimator and by the Lehmann-Scheff\'e Theorem $\hte^{MLE}$, derived from a sufficient and complete statistics, is uniformly of minimum variance among all unbiased estimators (e.g. \cit{lehmann}).

To bound the variance we use a universal, but somewhat rough deviation bound for $s\ge 0$ and $x\in[0,1]$:
\begin{align}
P(\hat g^{MLE}(x)-g(x)\ge s)&=\exp\Big(-n\int_0^1(s-R\abs{\xi-x}^\beta+g(x)-g(\xi))_+d\xi\Big)\nonumber\\
&\le\exp\Big(-n\int_0^1(s-2R\abs{\xi-x}^\beta)_+d\xi\Big)\nonumber\\
&\le\begin{cases}\exp(-n\frac{2R}{\beta+1}(s/2R)^{(\beta+1)/\beta}),& s\in[0,2R],\\ \exp(-n(s-2R/(\beta+1))),& s>2R.\end{cases}\label{EqgMLE}
\end{align}
In the first step we have evaluated the probability that no observation lies in $\{(\xi,\eta)\,|\,\eta+R\abs{x-\xi}^\beta<g(x)+s\}$ using the PPP property. Integrating this survival function bound, we obtain directly
\begin{align}
&\E[\hat g^{MLE}(x)-g(x)]= \int_0^\infty P(\hat g^{MLE}(x)-g(x)\ge s)\,ds\nonumber\\
&\quad\le\int_{0}^{2R} \exp\Big(-n\frac{2R}{\beta+1}(s/2R)^{(\beta+1)/\beta}\Big)ds+\int_{2R}^\infty e^{-n(s-2R/(\beta+1))}ds\nonumber\\
&\quad = 
\Gamma(\beta/(\beta+1))\beta(2R/(\beta+1))^{1/(\beta+1)}n^{-\beta/(\beta+1)}+\frac1n e^{-2\beta Rn/(\beta+1)}.\label{EqExpBound}
\end{align}
Insertion and a numerical evaluation then yield (the maximal constant being attained for $\beta\to 0$)
$\Var(\hte^{MLE})\le (2+o(1)) R^{1/(\beta+1)}\norm{w}_{L^2}^2n^{-(2\beta+1)/(\beta+1)}$.
\end{proof}

\begin{remark}
The MLE $\hat g^{MLE-regr}$ from \eqref{EqMLERegr} for the regression-type model is by the same (or simpler) arguments a sufficient and complete statistics over $\cc^\beta(R)$. It gives rise to the estimator
\[\hat\theta^{MLE-regr}=\frac1n\sum_{i=1}^n \Big(\hat g^{MLE-regr}(i/n)-\lambda^{-1} {\bf 1}\Big(\hat g^{MLE-regr}(i/n)=\y_i\Big)\Big)w(i/n).\]
Then for $\Exp(\lambda)$-distributed errors $\hat\theta^{MLE-regr}$ is an unbiased estimator of $\theta^{(n)}$ with
$\Var(\hat\theta^{MLE-regr})=\frac{1}{n^2\lambda}\sum_{i=1}^n\E[\hat g^{MLE-regr}(i/n)-g(i/n)]w(i/n)^2$. This follows analogously from the corresponding counting process $\bar N(t)$, replacing $X_j$ in the PPP case by $j/n$.
The asymptotic upper bound for the regression model as $n\to\infty$ is the same as for the PPP model, but with the noise level $1/n$ replaced by $1/(n\lambda)$, provided $w^2$ is Riemann-integrable.
\end{remark}

While $\hat\theta^{MLE}$ as an UMVU estimator enjoys very desirable finite sample properties of its risk, for inference questions we are also in need of distributional properties, at least asymptotically. A priori, in our Poisson-type boundary models it might not be clear whether the limiting distribution is Gaussian, but in fact this is the case since we average  over the interval $[0,1]$.
The proof of the following central limit theorems for the Lipschitz case is slightly more technical and therefore given in the appendix. Note that a  central limit theorem for the blockwise estimator $\hat\theta^{block}_n$ follows far more easily due to Lindeberg's theorem, profiting from the independence between blocks.

\begin{theorem}\label{ThmCLT1}
For $g\in \cc^1(R)$ (Lipschitz case), $\sup_{x\in[0,1]}\abs{w(x)}<\infty$  and  $\Var(\hat\theta_n^{MLE})\thicksim n^{-3/2}$, indicating the dependence of $\hat\theta^{MLE}$ on $n$, the following central limit theorems hold as $n\to\infty$:
\begin{align*}
\frac{n^{1/2}(\hte_n^{MLE}-\theta)}{(\int_0^1\E[\hat g^{MLE}(x)-g(x)]w(x)^2\,dx)^{1/2}}&\Rightarrow N(0,1),\\
\frac{n^{1/2}(\hte_n^{MLE}-\theta)}{(\int_0^1(\hat g^{MLE}(x)-g(x))w(x)^2\,dx)^{1/2}}&\Rightarrow N(0,1).
\end{align*}
Furthermore,   the following self-normalising version is valid:
\[ \frac{n^{1/2}(\hte_n^{MLE}-\theta)}{(\frac1n\sum_{j\ge 1}{\bf 1}(\hat g^{MLE}(X_j)=Y_j)w(X_j)^2)^{1/2}}\Rightarrow N(0,1).\]
\end{theorem}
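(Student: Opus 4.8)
The plan is to derive all three limit statements from a single martingale central limit theorem applied to the martingale $\bar M$ constructed in the proof of Theorem~\ref{TheoMLE1}, and then to conclude by Slutsky's lemma. First I would set up the reduction. From that proof, $\hte_n^{MLE}-\theta=-\tfrac1n\bar M(\infty)$, the predictable quadratic variation is $\langle\bar M\rangle_\infty=nV_n$ with $V_n:=\int_0^1(\hat g^{MLE}(x)-g(x))w(x)^2\,dx$, and $\E[V_n]=\int_0^1\E[\hat g^{MLE}(x)-g(x)]w(x)^2\,dx=n\,\Var(\hte_n^{MLE})=:v_n^2$; by the standing hypothesis $v_n^2\thicksim n^{-1/2}$, so $v_n>0$ for large $n$. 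I would then put $M^{(n)}_t:=(nv_n^2)^{-1/2}\bar M(t)$, a mean-zero, $L^2$-bounded $({\cal F}_t)$-martingale with $\langle M^{(n)}\rangle_\infty=V_n/v_n^2$ and $M^{(n)}_\infty=-n^{1/2}(\hte_n^{MLE}-\theta)/v_n$. The first assertion is then exactly $M^{(n)}_\infty\Rightarrow N(0,1)$.

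To obtain this I would verify the two hypotheses of a martingale CLT with asymptotically negligible jumps. Since the $({\cal F}_t)$-compensator of $\bar N$ is absolutely continuous in $t$, $\bar M$ jumps only where $\bar N$ does, namely by $w(X_j)$ at each point $(X_j,Y_j)$ on the graph of $\hat g^{MLE}$; hence $\sup_t\abs{\Delta M^{(n)}_t}\le\norm{w}_\infty(nv_n^2)^{-1/2}\thicksim\norm{w}_\infty n^{-1/4}\to0$ deterministically, so the conditional Lindeberg condition holds trivially for large $n$. It then only remains to prove $\langle M^{(n)}\rangle_\infty=V_n/v_n^2\to1$ in probability, which by $\E[V_n]=v_n^2$ and Chebyshev's inequality reduces to the variance bound $\Var(V_n)=o(v_n^4)=o(n^{-1})$. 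Given both ingredients, a martingale CLT yields $M^{(n)}_\infty\Rightarrow N(0,1)$.

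The concentration $\Var(V_n)=o(n^{-1})$ is the heart of the argument and the step I expect to be the main obstacle. By Fubini, $\Var(V_n)=\int_0^1\int_0^1\Cov(\hat g^{MLE}(x),\hat g^{MLE}(y))w(x)^2w(y)^2\,dx\,dy$, and the deviation bound \eqref{EqgMLE} only gives $0\le\Cov(\hat g^{MLE}(x),\hat g^{MLE}(y))\le\E[(\hat g^{MLE}(x)-g(x))^2]^{1/2}\E[(\hat g^{MLE}(y)-g(y))^2]^{1/2}\lesssim n^{-1}$, which is of order $v_n^4$ and not smaller, so one has to exhibit a decay of the covariance in $\abs{x-y}$. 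I would exploit the exact Poisson survival identity
\[P\big(\hat g^{MLE}(x)-g(x)\ge s,\,\hat g^{MLE}(y)-g(y)\ge t\big)=P\big(\hat g^{MLE}(x)-g(x)\ge s\big)\,P\big(\hat g^{MLE}(y)-g(y)\ge t\big)\,e^{\,n\abs{A_{x,s}\cap A_{y,t}}},\]
where $A_{x,s}:=\{(\xi,\eta):\eta\ge g(\xi),\ \eta+R\abs{x-\xi}<g(x)+s\}$, then estimate the overlap volume $\abs{A_{x,s}\cap A_{y,t}}$ geometrically (it vanishes once $R\abs{x-y}>s+t$, unless $g$ descends at essentially the maximal Lipschitz rate between $x$ and $y$), and integrate in $s,t$ using \eqref{EqgMLE}; this should yield $\Cov(\hat g^{MLE}(x),\hat g^{MLE}(y))\lesssim n^{-1}\phi(n^{1/2}\abs{x-y})$ for a rapidly decaying $\phi$, hence $\Var(V_n)\lesssim n^{-3/2}\norm{w}_\infty^4=o(n^{-1})$. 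An equivalent route is a localisation argument: partition $[0,1]$ into blocks of length $\ell_n$ with $n^{-1/2}\ll\ell_n\ll1$, show that with probability $1-o(1)$ the minimiser in $\hat g^{MLE}(x)$ sits in the block of $x$ or a neighbour, so that $V_n$ is, up to an $o(v_n^2)$ error, a sum of $1$-dependent block contributions with summable variances. The delicate point either way is to make the estimates uniform over $\cc^\beta(R)$ with $\beta=1$: for functions $g$ with long stretches of near-maximal slope the naive localisation fails and the MLE genuinely feels far-away data, but the assumption $\Var(\hte_n^{MLE})\thicksim n^{-3/2}$ is exactly what confines us to the regime where the overlap volumes are small. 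This technical core is what I would defer to the appendix.

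The remaining two central limit theorems then follow by Slutsky's lemma. Since $v_n^2/V_n\to1$ in probability by the previous step, $\frac{n^{1/2}(\hte_n^{MLE}-\theta)}{(\int_0^1(\hat g^{MLE}(x)-g(x))w(x)^2\,dx)^{1/2}}=-M^{(n)}_\infty\,(v_n^2/V_n)^{1/2}\Rightarrow N(0,1)$. For the self-normalising version I would introduce the counting process $\bar N_2$ defined as in the proof of Theorem~\ref{TheoMLE1} but with $w$ replaced by $w^2$; then $\bar N_2(\infty)=\sum_{j\ge1}{\bf 1}(\hat g^{MLE}(X_j)=Y_j)w(X_j)^2$ and its $({\cal F}_t)$-compensator increases to $nV_n$, so $\tfrac1n\bar N_2(\infty)=V_n+\tfrac1n\bar M_2(\infty)$ with $\bar M_2$ a martingale and $\langle\bar M_2\rangle_\infty=n\int_0^1(\hat g^{MLE}(x)-g(x))w(x)^4\,dx$ of expectation $\le\norm{w}_\infty^2 nv_n^2\thicksim n^{1/2}$. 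Hence $\E[(\tfrac1n\bar M_2(\infty))^2]=\tfrac1{n^2}\E[\langle\bar M_2\rangle_\infty]\lesssim n^{-3/2}=o(v_n^4)$, so $\tfrac1n\bar N_2(\infty)/v_n^2\to1$ in probability, and Slutsky's lemma once more gives $\frac{n^{1/2}(\hte_n^{MLE}-\theta)}{(\frac1n\sum_{j\ge1}{\bf 1}(\hat g^{MLE}(X_j)=Y_j)w(X_j)^2)^{1/2}}\Rightarrow N(0,1)$.
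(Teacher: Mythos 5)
Your overall architecture --- write $\hte_n^{MLE}-\theta=-\frac1n\bar M(\infty)$, normalise, apply a martingale CLT with asymptotically negligible jumps, and finish with Slutsky --- is sound, and several of your reductions are correct: the deterministic jump bound $\norm{w}_\infty(nv_n^2)^{-1/2}\thicksim n^{-1/4}$, the identification $\E[V_n]=v_n^2=n\Var(\hte_n^{MLE})$, and the treatment of the self-normaliser via the unbiased estimator of $\int_0^1 gw^2$ (the last is essentially how the paper handles the third display). But there is a genuine gap at the step you yourself flag as the technical core: the concentration $\Var(V_n)=o(n^{-1})$, equivalently $\langle M^{(n)}\rangle_\infty\to 1$ in probability, is never established, and \emph{all three} limit statements hinge on it --- without it the martingale CLT, the second Slutsky step and the self-normalised version all collapse. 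Your covariance-decay sketch is not a proof: the overlap sets $A_{x,s}\cap A_{y,t}$ need not be small when $g$ descends at nearly the maximal Lipschitz rate between $x$ and $y$ (there $\hat g^{MLE}(x)$ and $\hat g^{MLE}(y)$ are determined by the same distant observations), and the hypothesis $\Var(\hte_n^{MLE})\thicksim n^{-3/2}$ is a global, integrated statement that does not localise; one would have to argue separately that on such stretches the deviations $\hat g^{MLE}-g$ are themselves of smaller order, and make everything uniform. Deferring this to an appendix is deferring the proof.

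The paper avoids the direct covariance computation entirely by executing the localisation route you mention only in passing, and that route is where essentially all the work lies. It partitions $[0,1]$ into blocks $J_l$ of length $r_n$ with $r_n^3n\to\infty$, forms the \emph{independent} blockwise MLEs $\hat g_l^{MLE}\ge\hat g^{MLE}$ and the corresponding unbiased blockwise estimators, proves the CLT for their sum by a fourth-moment Lyapunov condition (via Burkholder--Davis--Gundy for the blockwise martingales and the moment bound of Lemma \ref{LemSelfnorm}), and obtains the concentration of the random normaliser for free from independence across blocks. The remaining and hardest part is to show that the global and blockwise quantities differ by $o_P(n^{-3/4})$: this requires the event $\Omega_n$ that every block contains a coincidence point of $\hat g_l^{MLE}$ and $\hat g^{MLE}$ (shown to have probability $1-O(n^{-1}r_n^{-3})$), tail bounds for the left-most coincidence point $L_l$ conditional on the past blocks, and the Lipschitz geometry to bound $\int_{lr_n}^{L_l}(\hat g_l^{MLE}-\hat g^{MLE})$ by a multiple of $(L_l-lr_n)^2$, yielding the key estimate \eqref{EqIntghat}. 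None of this comparison appears in your proposal, so what is missing is not a routine computation but the central argument of the proof.
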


\begin{remark}
The 'super-efficient' case $\Var(\hat\theta_n^{MLE})=o(n^{-3/2})$ is to some extent degenerate and might possibly result in non-Gaussian limit laws. A lower estimate in \eqref{EqgMLE} above shows that $\E[\hat g^{MLE}(x)-g(x)]\gtrsim n^{-3/2}$ holds as soon as the function $g$ satisfies $\abs{g(y)-g(x)}\le R'\abs{y-x}$ for $R'<R$ and $y$ in a neighbourhood of $x$. This means that $\Var(\hat\theta^{MLE})\thicksim n^{-3/2}$ and the CLTs above are applicable whenever $g$ has a local Lipschitz constant  smaller than $R$, at least on some subinterval. Note that in this case we also get 'for free' the nice geometric result that the number of observations  on the graph of $\hat g^{MLE}$ is of order $n^{1/2}$ (in mean) because of
\[\frac1n\sum_{j\ge 1} {\bf 1}(\hat g^{MLE}(X_j)=Y_j)\thicksim \int_0^1 \E[\hat g^{MLE}(x)-g(x)]\,dx\thicksim n^{-1/2}.\]
The standard deviation is of smaller order as the proof of Theorem \ref{ThmCLT1} shows.
\end{remark}

An immediate consequence of the selfnormalising CLT is the following inference statement.

\begin{corollary}\label{CorCI}
Under the assumptions of Theorem \ref{ThmCLT1}
\[ {\cal I}_n:=\Big[\hat\theta_n^{MLE}-\hat\sigma_nq_{1-\alpha/2},\hat\theta_n^{MLE}+\hat\sigma_nq_{1-\alpha/2}\Big],\, \hat\sigma_n^2:=\frac{1}{n^2}\sum_{j\ge 1}{\bf 1}(\hat g^{MLE}(X_j)=Y_j)w(X_j)^2,
\]
with $q_{1-\alpha/2}$ the $(1-\alpha/2)$-quantile of $N(0,1)$, is a confidence interval for $\theta$  with asymptotic coverage $1-\alpha$.
\end{corollary}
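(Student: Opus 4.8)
The plan is to read the corollary off directly from the self-normalising central limit theorem of Theorem~\ref{ThmCLT1}, since the interval $\mathcal{I}_n$ has been designed so that its half-width is exactly $\hat\sigma_n q_{1-\alpha/2}$ with $\hat\sigma_n$ the natural self-normalising scale. First I would record the algebraic identity
\[ n\hat\sigma_n^2=\frac1n\sum_{j\ge 1}{\bf 1}(\hat g^{MLE}(X_j)=Y_j)w(X_j)^2,\]
so that, on the event $\{\hat\sigma_n>0\}$,
\[ \frac{\hat\theta_n^{MLE}-\theta}{\hat\sigma_n}=\frac{n^{1/2}(\hat\theta_n^{MLE}-\theta)}{\big(\frac1n\sum_{j\ge 1}{\bf 1}(\hat g^{MLE}(X_j)=Y_j)w(X_j)^2\big)^{1/2}},\]
which is precisely the quantity that Theorem~\ref{ThmCLT1} asserts converges in distribution to $N(0,1)$.

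Next I would rewrite the coverage event: $\theta\in\mathcal{I}_n$ is equivalent to $\abs{\hat\theta_n^{MLE}-\theta}\le \hat\sigma_n q_{1-\alpha/2}$, hence, on $\{\hat\sigma_n>0\}$, to $\babs{(\hat\theta_n^{MLE}-\theta)/\hat\sigma_n}\le q_{1-\alpha/2}$. The set $(-\infty,-q_{1-\alpha/2}]\cup[q_{1-\alpha/2},\infty)$ is closed with boundary a standard normal null set, so the Portmanteau theorem (equivalently, continuity of the standard normal c.d.f. at $\pm q_{1-\alpha/2}$) yields
\[ P(\theta\in\mathcal{I}_n)\longrightarrow P\big(\abs{Z}\le q_{1-\alpha/2}\big)=1-\alpha,\qquad Z\sim N(0,1),\]
where the last equality uses $P(Z>q_{1-\alpha/2})=P(Z<-q_{1-\alpha/2})=\alpha/2$ by symmetry and the definition of the $(1-\alpha/2)$-quantile.

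The only point requiring a word is the event $\{\hat\sigma_n=0\}$, i.e.\ that no point lies on the graph of $\hat g^{MLE}$ at a design point with $w\neq 0$. Under the Lipschitz hypothesis of Theorem~\ref{ThmCLT1} the expected weighted count of points on the graph is of order $n^{1/2}$ (see the remark following Theorem~\ref{ThmCLT1}), so $P(\hat\sigma_n=0)\to 0$ and this event is asymptotically negligible; in any case its exclusion is already implicit in the self-normalising CLT being well posed. I do not anticipate a genuine obstacle: once Theorem~\ref{ThmCLT1} is available the corollary is essentially a one-line consequence of the Portmanteau theorem, with all the real work — the asymptotic normality and the consistency of $n^{-1}\sum_j{\bf 1}(\hat g^{MLE}(X_j)=Y_j)w(X_j)^2$ as a variance proxy — absorbed into the proof of the self-normalising CLT in the appendix.
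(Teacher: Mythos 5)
Your proof is correct and is exactly the argument the paper has in mind: the paper states the corollary as an immediate consequence of the self-normalising CLT in Theorem \ref{ThmCLT1}, and your reduction via the identity $\hat\sigma_n^{-1}(\hat\theta_n^{MLE}-\theta)=n^{1/2}(\hat\theta_n^{MLE}-\theta)\big(\frac1n\sum_{j}{\bf 1}(\hat g^{MLE}(X_j)=Y_j)w(X_j)^2\big)^{-1/2}$ together with continuity of the normal c.d.f.\ at $\pm q_{1-\alpha/2}$ is the intended one-line derivation. Your side remark on $\{\hat\sigma_n=0\}$ is also consistent with the paper's observation that the weighted count on the graph is of order $n^{1/2}$ in mean with smaller-order fluctuations.
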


Also the asymptotic variance can be determined explicitly.

\begin{corollary}\label{CorAVAR}
Under the assumptions of Theorem \ref{ThmCLT1} we obtain
\[ \Var(\hat\theta_n^{MLE})=\Big(\frac{\sqrt{\pi}}{2}+o(1)\Big)n^{-3/2}\int_0^1\sqrt{(R^2-g'(x)^2)/R}w(x)^2dx,\]
where $g'$ denotes the weak derivative of the Lipschitz function $g$, and thus
\[ n^{3/4}(\hte_n^{MLE}-\theta)\Rightarrow N\Big(0,\int_0^1\sqrt{(R^2-g'(x)^2)/R}w(x)^2dx\Big).\]
\end{corollary}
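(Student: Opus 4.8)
The plan is to deduce everything from the exact variance identity of Theorem~\ref{TheoMLE1},
\[
\Var(\hat\theta_n^{MLE})=\frac1n\int_0^1\E[\hat g^{MLE}(x)-g(x)]\,w(x)^2\,dx,
\]
and to pin down the pointwise asymptotics of the mean deviation $\delta_n(x):=\E[\hat g^{MLE}(x)-g(x)]$. Concretely, I would show that for Lebesgue-a.e.\ $x\in(0,1)$
\[
n^{1/2}\delta_n(x)\nto \tfrac{\sqrt\pi}{2}\sqrt{(R^2-g'(x)^2)/R},
\]
with a majorant uniform in $x$ and $n$, so that dominated convergence over $\int_0^1(\cdot)\,w(x)^2\,dx$ (using $\sup_x\abs{w(x)}<\infty$) upgrades this to $n^{3/2}\Var(\hat\theta_n^{MLE})\nto\tfrac{\sqrt\pi}{2}\int_0^1\sqrt{(R^2-g'(x)^2)/R}\,w(x)^2\,dx$, the first claim. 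Rademacher's theorem gives differentiability of the Lipschitz function $g$ at a.e.\ $x$; at the exceptional set $\{\abs{g'}=R\}$ the right-hand side vanishes and the bounds below force $\delta_n(x)=O(n^{-1})$ there, so it is harmless.

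For the pointwise limit I would start from the exact survival probability recorded in the first line of \eqref{EqgMLE} (with $\beta=1$),
\[
P\big(\hat g^{MLE}(x)-g(x)\ge s\big)=\exp\Big(-n\int_0^1\big(s-R\abs{\xi-x}+g(x)-g(\xi)\big)_+\,d\xi\Big),
\]
and rescale $s=n^{-1/2}t$, $\xi=x+n^{-1/2}u$. Since $g(x)-g(\xi)=-g'(x)(\xi-x)+o(\abs{\xi-x})$ at a differentiability point and the prefactor $n$ is absorbed by the two rescalings, the exponent converges to
\[
\int_{-\infty}^{\infty}\big(t-R\abs{u}-g'(x)u\big)_+\,du=\frac{t^2}{2}\Big(\frac{1}{R+g'(x)}+\frac{1}{R-g'(x)}\Big)=\frac{R\,t^2}{R^2-g'(x)^2},
\]
where $\abs{g'(x)}\le R$ guarantees non-negativity. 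Hence $P(\hat g^{MLE}(x)-g(x)\ge n^{-1/2}t)\nto e^{-Rt^2/(R^2-g'(x)^2)}$ for each $t$, and integrating in $t$,
\[
n^{1/2}\delta_n(x)=\int_0^\infty P\big(\hat g^{MLE}(x)-g(x)\ge n^{-1/2}t\big)\,dt\nto\int_0^\infty e^{-Rt^2/(R^2-g'(x)^2)}\,dt=\tfrac{\sqrt\pi}{2}\sqrt{(R^2-g'(x)^2)/R}.
\]
The interchange of limit and $dt$-integral is legitimised by the crude deviation bound from \eqref{EqgMLE}, which for $\beta=1$ reads $P(\hat g^{MLE}(x)-g(x)\ge s)\le e^{-ns^2/(4R)}$ for $s\le 2R$ and $\le e^{-n(s-R)}$ for $s>2R$, uniformly in $x\in[0,1]$ (also near the boundary, where the integration window in $u$ is only a half-line); the first bound supplies the $dt$- and, together with boundedness of $w$, the $dx$-majorant, while the second confines the residual mass at scale $t\gtrsim n^{1/2}$.

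The central limit theorem then follows by feeding the asymptotics just obtained into the first CLT of Theorem~\ref{ThmCLT1}: since $\int_0^1\E[\hat g^{MLE}(x)-g(x)]w(x)^2\,dx=(\tfrac{\sqrt\pi}{2}+o(1))\,n^{-1/2}\int_0^1\sqrt{(R^2-g'(x)^2)/R}w(x)^2\,dx$, multiplying the normalising denominator through and invoking Slutsky's lemma converts the $n^{1/2}$-rate into the $n^{3/4}$-rate with the stated Gaussian limit. The limiting variance is strictly positive precisely when $\abs{g'}<R$ on a set of positive Lebesgue measure, which is exactly the standing assumption $\Var(\hat\theta_n^{MLE})\thicksim n^{-3/2}$ carried over from Theorem~\ref{ThmCLT1}, so the statement is non-degenerate.

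The main obstacle is the uniform control needed for the two dominated-convergence steps: the first-order expansion $g(\xi)-g(x)=g'(x)(\xi-x)+o(\abs{\xi-x})$ holds only at a.e.\ $x$ and is not uniform, so one must combine a.e.\ pointwise convergence of $x\mapsto n^{1/2}\delta_n(x)$ with a single global majorant extracted from \eqref{EqgMLE}; a minor additional nuisance is the $O(n^{-1/2})$-wide boundary strip near $x\in\{0,1\}$, where the effective $u$-range is truncated to a half-line.
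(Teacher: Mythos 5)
Your proposal is correct and follows essentially the same route as the paper: the exact survival probability from \eqref{EqgMLE}, the rescaling $s=n^{-1/2}t$, $\xi=x+n^{-1/2}u$ at a.e.\ differentiability point to obtain the Gaussian-type limit $\exp(-Rt^2/(R^2-g'(x)^2))$, two applications of dominated convergence (in $t$ via \eqref{EqgMLE}, in $x$ via \eqref{EqExpBound}) to get the variance asymptotics, and Slutsky combined with Theorem \ref{ThmCLT1} for the CLT. The only cosmetic difference is your unneeded side claim that $\delta_n(x)=O(n^{-1})$ on $\{\abs{g'}=R\}$; the a.e.\ pointwise limit already covers that set.
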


\begin{proof}
A Lipschitz function $g$ is absolutely continuous, hence a.e. differentiable and necessarily $\abs{g'(x)}\le R$ holds a.e.
For $x\in(0,1)$ where $g'(x)$ exists we obtain, arguing by dominated convergence using \eqref{EqgMLE},
\begin{align*}
&P\Big(n^{1/2}(\hat g_n^{MLE}(x)-g(x))\ge z\Big)\\
&=\exp\Big(-n\int_0^1 (g(x)+zn^{-1/2}-R\abs{\xi-x}-g(\xi))_+ d\xi\Big)\\
 &=\exp\Big(-\int_{-n^{1/2}x}^{n^{1/2}(1-x)} \big(n^{1/2}(g(x)-g(x+n^{-1/2}u))+z-R\abs{u}\big)_+ du\Big)\\
&\rightarrow \exp\Big(-\int_{-\infty}^\infty (z-R\abs{u}-g'(x)u)_+du\Big)= \exp\Big(-\frac{R}{R^2-g'(x)^2}z^2\Big).
\end{align*}
By integrating this survival function over $z\in\R^+$ and applying dominated convergence due to the uniform bound \eqref{EqgMLE}, we conclude
\[ n^{1/2}\E[\hat g_n^{(MLE)}(x)-g(x)]\rightarrow \sqrt{(R^2-g'(x)^2)/R}\frac{\sqrt{\pi}}{2}.\]
Integration over $x$ yields by another application of dominated convergence in view of \eqref{EqExpBound} the asymptotic expression for $\Var(\hat\theta_n^{MLE})$.
\end{proof}

The last corollary shows that for constant $g$ the  asymptotic variance (rescaled by $n^{3/2}$) equals $\sqrt{R\pi}\norm{w}_{L^2}^2/2$ and is largest among all admissible $g$ while for linear $g$ with slope $\pm R$ the rescaled asymptotic variance vanishes, i.e. the convergence rate is faster than $n^{-3/2}$. In Figure 2 we see indeed that $\hat g^{MLE}$ is closest to $g$ where $g$ has largest slope. Notice that the bias correction via point counts gives a precise meaning for this observation. So far, our methods of proof do not extend to the $\beta$-H\"older case with $\beta<1$ or to $w\in L^2$ because we need to control the difference to a block-wise partitioned MLE. The strategy of proof does neither apply to the monotone MLE, as introduced next.

\subsection{MLE under monotonicity}

Let us consider the general nonparametric class
\[{\cal M}:=\{g:[0,1)\to\R\,|\, g \text{ is increasing and left-continuous}\}\]
of monotone, that is (not necessarily strictly) increasing functions. Since monotone $g$ have at most countably many jumps, the observations for left- and right-continuous versions of $g$ are a.s. identical. Then the nonparametric MLE for the PPP model over this class is given by
\[ \hat g^{Mon}(x)=\min_{i:X_i\ge x}Y_i,\quad x\in[0,1),\]
which is obvious from the fact that any $g\in {\cal M}$ with $g(X_i)\le Y_i$ for all $i$ necessarily satisfies $g\le \hat g^{Mon}$, see also Figure \ref{fig-est-mon}. Note that a.s. $\hat g^{Mon}(x)<\infty$ holds for $x\in[0,1)$, but $\lim_{x\uparrow 1}\hat g^{Mon}(x)=\infty$.

\begin{figure}[t]\begin{center}
\begin{minipage}[t]{0.49\textwidth}
\includegraphics[width=\textwidth]{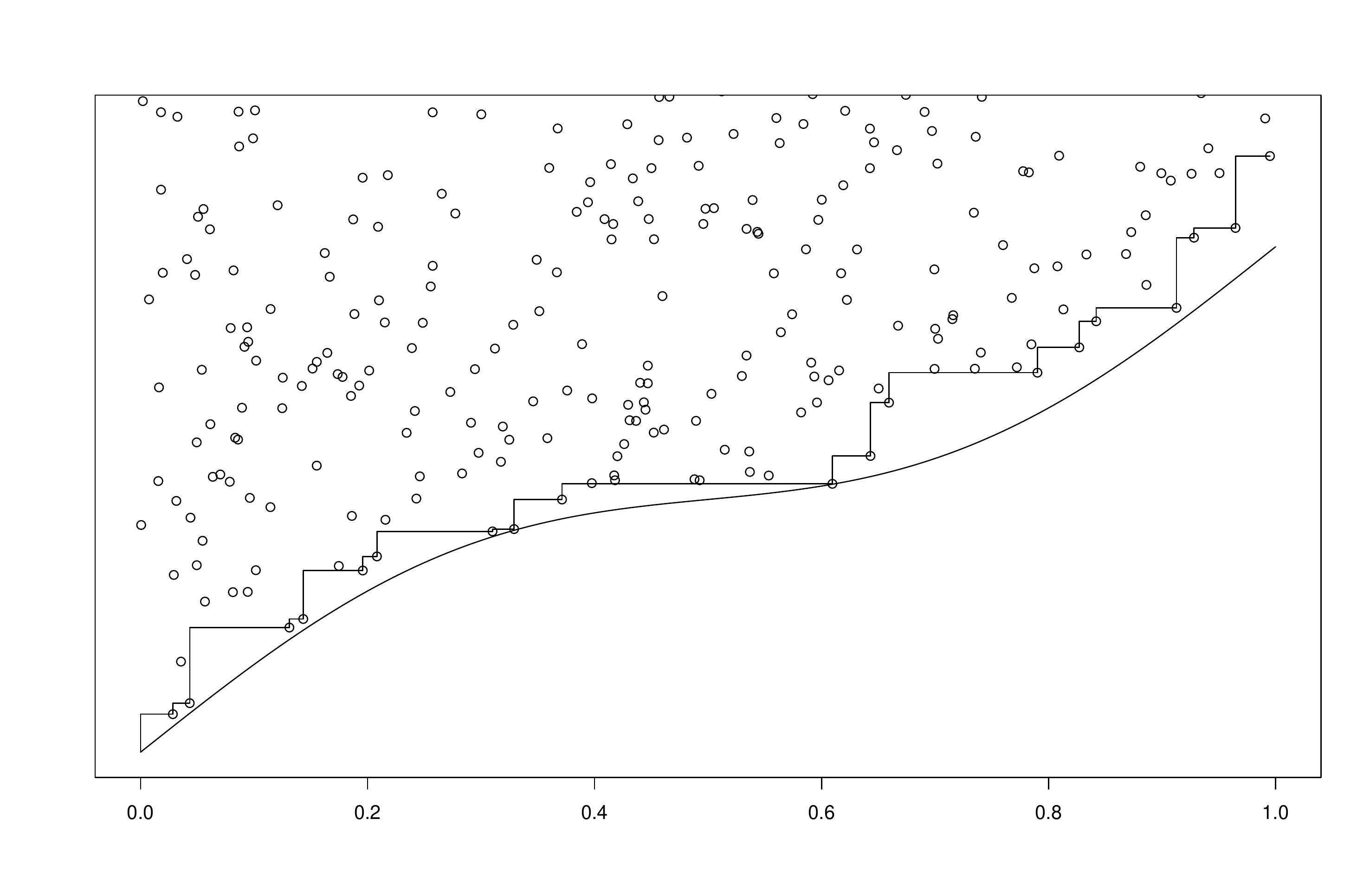}
\end{minipage}
\begin{minipage}[t]{0.49\textwidth}
\includegraphics[width=\textwidth]{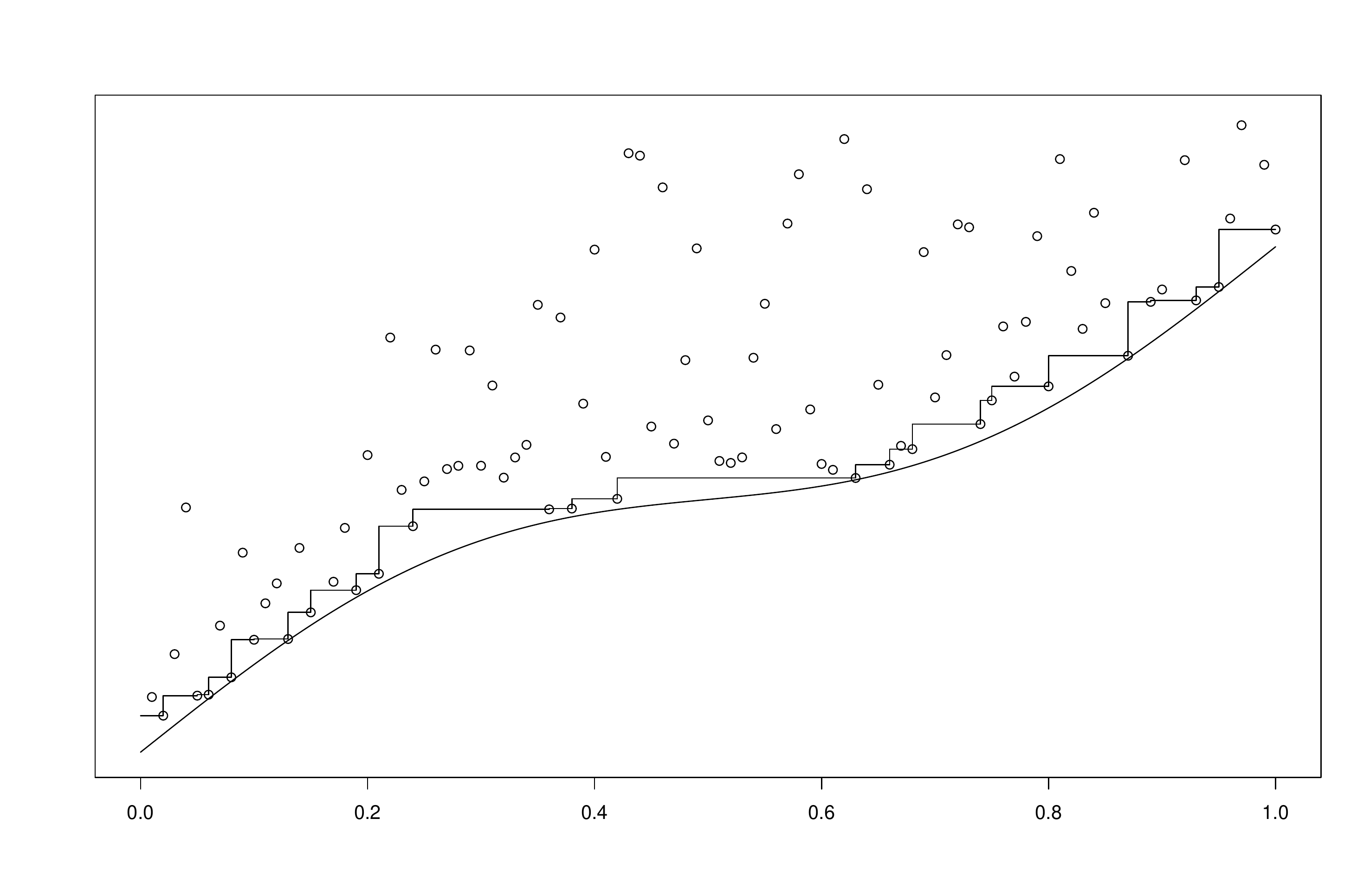}
\end{minipage}\end{center}\vspace{-10mm}
\caption{Construction of the estimator $\hat{g}^{Mon}$ in the PPP model ($n=100$, left) and in the regression-type model ($n=100$, $\eps_i\sim\Exp(1)$, right).}\label{fig-est-mon}
\end{figure}

\begin{proposition}
The nonparametric MLE $(\hat g^{Mon}(x),x\in[0,1))$ is a sufficient and complete statistics for ${\cal M}$.
\end{proposition}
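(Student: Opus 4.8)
The plan is to copy the proof of Proposition~\ref{PropSuff1} almost verbatim; the only genuinely new points arise because, unlike in the H\"older class, there is no pointwise smallest monotone function through a prescribed point and because $\hat g^{Mon}$ blows up as $x\uparrow 1$. For \emph{sufficiency} I would first record the elementary identity, valid for every $g\in\cm$ by monotonicity of $g$,
\[\{\forall j\ge1:\ Y_j\ge g(X_j)\}=\{\hat g^{Mon}\ge g\};\]
``$\supseteq$'' holds because $\hat g^{Mon}(X_j)\le Y_j$ for every $j$, and ``$\subseteq$'' because for any $x$ a minimiser $i^\ast$ of $Y_i$ over $\{i:X_i\ge x\}$ gives $\hat g^{Mon}(x)=Y_{i^\ast}\ge g(X_{i^\ast})\ge g(x)$. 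Inserting this into \eqref{EqLik} rewrites the likelihood as ${\cal L}(g)=\exp\big(n+\sum_{j}(-Y_j)_+\big)\exp\big(n\int_0^1 g\big)\,{\bf 1}(\hat g^{Mon}\ge g)$ with the first factor free of $g$, so Neyman's factorisation criterion yields sufficiency of $(\hat g^{Mon}(x),x\in[0,1))$.

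For the \emph{measurable setup} I would equip $\cm$ with the $\sigma$-algebra $\mathcal{B}_{\cm}$ generated by the brackets $[g,\infty):=\{h\in\cm:h\ge g\}$, $g\in\cm$, and observe that it coincides with the one generated by the point evaluations $h\mapsto h(x)$: on one hand $[g,\infty)=\bigcap_{x}\{h(x)\ge g(x)\}$ reduces to a countable intersection over a dense countable set of $x$ (using left-continuity and monotonicity of $h$ and $g$); on the other hand $g\equiv y_0$ gives $\{h(0)\ge y_0\}=[y_0,\infty)$, a step function jumping up at a rational $a$ gives $\{h(a+)\ge y_0\}$ after a countable union of brackets, and $h(x_0)=\sup_{a<x_0}h(a+)$ (valid by left-continuity) recovers the remaining evaluations. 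One must check that $\hat g^{Mon}$ is a genuine $\cm$-valued random element: a.s.\ $\hat g^{Mon}(x)<\infty$ for $x<1$ and $\hat g^{Mon}$ is increasing and left-continuous (all by local finiteness of the PPP), and each $\hat g^{Mon}(x)$ is measurable since $\PP(\hat g^{Mon}(x)>t)$ is the void probability of $[x,1)\times(-\infty,t]$. Crucially, $\{\hat g^{Mon}\in[g,\infty)\}=\{\forall j:Y_j\ge g(X_j)\}$ is exactly the indicator appearing in ${\cal L}(g)$.

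For \emph{completeness} I would argue as in Proposition~\ref{PropSuff1}: let $\phi:\cm\to\R$ be $\mathcal{B}_{\cm}$-measurable with $\E_g[\phi(\hat g^{Mon})]=0$ for all $g\in\cm$. Changing to the dominating $P_0$ via \eqref{EqLik} gives $\E_0[\phi(\hat g^{Mon})e^{\sum_j(-Y_j)_+}{\bf 1}(\hat g^{Mon}\in[g,\infty))]=0$ for all $g$, and splitting $\phi=\phi^+-\phi^-$ the two nonnegative measures $B\mapsto\E_0[\phi^\pm(\hat g^{Mon})e^{\sum_j(-Y_j)_+}{\bf 1}(\hat g^{Mon}\in B)]$ agree on $\{[g,\infty):g\in\cm\}$, which is $\cap$-stable since $[g,\infty)\cap[h,\infty)=[g\vee h,\infty)$ with $g\vee h\in\cm$. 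These measures are $\sigma$-finite: they are finite on the exhausting sequence $[(-k),\infty)=\{h(0)\ge-k\}\uparrow\cm$, because on $\{\hat g^{Mon}(0)\ge-k\}$ one has $\sum_j(-Y_j)_+\le k\cdot\#\{j:Y_j\in[-k,0)\}$, a count of finite Poisson mean under $P_0$, so after a further truncation on that count the weight $e^{\sum_j(-Y_j)_+}$ is bounded and plain $P_0$-integrability of $\phi^\pm(\hat g^{Mon})$ suffices. The uniqueness theorem for measures coinciding on a $\cap$-stable generator then gives equality on all of $\mathcal{B}_{\cm}$, in particular for $B=\{\phi>0\}$ and $B=\{\phi<0\}$, forcing $\phi^+(\hat g^{Mon})e^{\sum_j(-Y_j)_+}=\phi^-(\hat g^{Mon})e^{\sum_j(-Y_j)_+}$ $P_0$-a.s., hence $\phi(\hat g^{Mon})=0$ $P_g$-a.s.\ for every $g$ since $P_g\ll P_0$; the regression analogue with $\eps_i\sim\Exp(\lambda)$ is identical, replacing $X_j$ by $j/n$.

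The main obstacle is the measurable bookkeeping of the second step: for the H\"older class $\{h(x_0)\ge y_0\}$ is \emph{literally} the single bracket $[y_0-R\abs{\cdot-x_0}^\beta,\infty)$, whereas for $\cm$ one has to synthesise point evaluations from brackets through one-sided limits together with the left-continuity reconstruction, and the unboundedness of $\hat g^{Mon}$ near $x=1$ is what forces the $\sigma$-finiteness/truncation argument for the dominating measure. Everything else is the same as in Proposition~\ref{PropSuff1}.
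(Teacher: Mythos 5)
Your proposal follows the paper's argument for this proposition essentially step by step: sufficiency via the factorisation with the indicator $\{\hat g^{Mon}\ge g\}$, and completeness via the observation that point evaluations can be synthesised from countably many brackets over monotone step functions (the paper does this in one stroke through $\{h\,|\,h(x_0)\ge y_0\}=\bigcup_{n}[y_0-n{\bf 1}_{[0,x_0)},\infty)$, your detour through $h(a+)$ and left-continuity amounts to the same thing), followed by the $\cap$-stable-generator uniqueness argument exactly as in Proposition \ref{PropSuff1}. The one wobbly spot is your justification of $\sigma$-finiteness: $P_0$-integrability of $\phi^{\pm}(\hat g^{Mon})$ is not part of the hypothesis, and truncating on the point count $\#\{j:Y_j\in[-k,0)\}$ does not define a subset of ${\cal M}$, so it cannot enter the exhausting sequence; the correct (and immediate) argument is that $\mu^{\pm}([(-k),\infty))=e^{-n\int(1-k)}\E_{-k}[\phi^{\pm}(\hat g^{Mon})]<\infty$ by applying the unbiasedness hypothesis to the constant function $g\equiv-k$ — a point the paper silently omits, and which has nothing to do with the blow-up of $\hat g^{Mon}$ at $x=1$.
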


\begin{proof}
Sufficiency follows again from the likelihood representation
\[{\cal L}(g)=\exp\Big(n+\sum_{j\ge 1} (-Y_j)_+\Big)\exp\Big(n\int_0^1g(x)\,dx\Big){\bf 1}\Big(g\le\hat g^{Mon}\Big),\]
using $g\in L^1$ because of $g(x)\in[g(0),g(1)]$ by monotonicity,
and the factorisation criterion.
For completeness we equip $\cal M$ with the ball $\sigma$-algebra for the uniform norm, cf. Examples 1.7.3, 1.7.4 in \cit{vanderVaart}, which is generated by the point evaluations $f\mapsto f(x)$, $x\in[0,1)$. In particular, this implies that $\hat g^{Mon}$ is measurable because its point evaluations are measurable. For fixed  $x_0\in[0,1)$, $y_0\in\R$ we have the bracket representation
\[ \{g\in{\cal M}\,|\, g(x_0)\ge y_0\}=\bigcup_{n\in\N} [y_0-n{\bf 1}_{[0,x_0)}(x),\infty).\]
Noting that maxima of monotone functions are again monotone, the brackets form an $\cap$-stable generator of the ball $\sigma$-algebra.
The proof now follows exactly that of Proposition \ref{PropSuff1}.
\end{proof}

In analogy with the $\cc^\beta(R)$-case we build the estimator
\[ \hte^{Mon}:=\int_0^1 \hat g^{Mon}(x)w(x)\,dx-\frac1n\sum_{j\ge 1}{\bf 1}\big(\hat g^{Mon}(X_j)=Y_j\big)w(X_j)\]
that will enjoy similar nice properties. We have to consider, however, weight functions $w$ whose support stays away from $x=1$ in order to avoid problems arising from $\hat g^{Mon}(x)\uparrow\infty$ as $x\uparrow 1$.

\begin{theorem}
Assume $\supp(w)\subset [0,1)$.
Then the estimator $\hte^{Mon}$ is for each finite sample size $n$ UMVU over the class ${\cal M}$ with
\[ \Var(\hte^{Mon})=\frac 1n\int_0^1\E[\hat g^{Mon}(x)-g(x)]w(x)^2dx.\]
For bounded $w$ it satisfies
\[ \Var(\hte^{Mon})\le \Big(\tfrac{3\pi(g(1)-g(0))}{2}\Big)^{1/2}\norm{w}_{\infty}^2 n^{-3/2}+O(n^{-2}).\]
\end{theorem}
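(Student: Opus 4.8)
The plan is to run the argument of Theorem~\ref{TheoMLE1} almost verbatim, with $\hat g^{Mon}$ in place of $\hat g^{MLE}$. First I would introduce the weighted counting process
\[ \bar N(t):=\sum_{j\ge1}{\bf 1}\big(Y_j\le t\wedge\min_{i:X_i\ge X_j}Y_i\big)w(X_j),\qquad t\in\R, \]
observing that the event $Y_j\le\min_{i:X_i\ge X_j}Y_i$ is exactly $\{\hat g^{Mon}(X_j)=Y_j\}$, so that $\bar N(\infty):=\lim_{t\to\infty}\bar N(t)=\sum_{j\ge1}{\bf 1}(\hat g^{Mon}(X_j)=Y_j)w(X_j)$ is the weighted point count on the graph of $\hat g^{Mon}$. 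As in Theorem~\ref{TheoMLE1}, $\bar N$ is adapted to $({\cal F}_t)$ from \eqref{EqFt}, its $(w\equiv1)$-version has stochastic intensity $\bar\lambda_t=n\int_0^1\int_{[g(x),t]}{\bf 1}(\min_{i:X_i\ge x,\,Y_i<s}Y_i\ge s)\,ds\,dx$, and compensation produces an $({\cal F}_t)$-martingale $\bar M$ whose predictable quadratic variation is obtained by inserting $w(x)^2$ for $1$ in the compensator. Letting $t\to\infty$ and using $g\le\hat g^{Mon}$ (valid by monotonicity, since $g(x)\le g(X_i)\le Y_i$ whenever $X_i\ge x$) collapses the indicator in the compensator to ${\bf 1}(\hat g^{Mon}(x)\ge s)$, giving the key identity $\lim_{t\to\infty}(\bar N(t)-\bar M(t))=n\int_0^1(\hat g^{Mon}(x)-g(x))w(x)\,dx$ and hence $\hte^{Mon}=\theta-\tfrac1n\bar M(\infty)$. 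Optional stopping (Lemma~\ref{LemStop} with $\tau=\infty$, the $L^2$-boundedness being $\E[\langle\bar M\rangle_\infty]=n\int_0^1\E[\hat g^{Mon}(x)-g(x)]w(x)^2dx<\infty$, finite by the deviation bound below together with $\supp(w)\subset[0,1)$) then yields $\E[\hte^{Mon}]=\theta$ and $\Var(\hte^{Mon})=\tfrac1{n^2}\E[\langle\bar M\rangle_\infty]=\tfrac1n\int_0^1\E[\hat g^{Mon}(x)-g(x)]w(x)^2dx$. Finally, $\hat g^{Mon}$ is sufficient and complete for ${\cal M}$ by the preceding proposition, and $\hte^{Mon}$ is a measurable functional of $\hat g^{Mon}$ (the points on its graph are its jump points, and $x\mapsto\hat g^{Mon}(x)$ determines both their locations $X_j$ and heights $Y_j$), so Lehmann--Scheff\'e gives the UMVU property.

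For the variance bound one only needs to estimate $\E[\hat g^{Mon}(x)-g(x)]$ for $x\in\supp(w)$. By the PPP property, $P(\hat g^{Mon}(x)-g(x)\ge s)=\exp(-n\int_x^1(s-(g(\xi)-g(x)))_+d\xi)$, the exponent being $n$ times the area of the strip $\{(\xi,\eta):\xi\ge x,\ g(\xi)\le\eta<g(x)+s\}$. I would lower-bound this exponent using monotonicity alone: for a window width $h$ with $x+h\le1$ and $\delta_h(x):=g(x+h)-g(x)$, one has $g(\xi)-g(x)\le\delta_h(x)$ on $[x,x+h]$, so the exponent is $\ge nh(s-\delta_h(x))_+$, while for $s\le\delta_h(x)$ a sharper triangle-type estimate of order $ns^2/\delta_h(x)$ refines the small-$s$ regime. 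Integrating the survival function in $s$ gives a pointwise bound of order $\sqrt{\delta_h(x)/(nh)}+(nh)^{-1}$; choosing $h$ appropriately (e.g. $h\asymp(n\Delta)^{-1/2}$ uniformly, with $\Delta:=g(1)-g(0)$), multiplying by $w(x)^2$ and integrating in $x$, one invokes Cauchy--Schwarz together with the telescoping inequality $\int_0^1\delta_h(x)\,dx\le h\,\Delta$ (valid for increasing $g$) to eliminate the $h$-dependence and obtain $\Var(\hte^{Mon})=\tfrac1n\int_0^1\E[\hat g^{Mon}-g]w^2\lesssim n^{-3/2}\norm{w}_\infty^2\,\Delta^{1/2}$; keeping track of the constants in the $\int_0^s$-part of the survival integral yields the stated factor $(3\pi(g(1)-g(0))/2)^{1/2}$, the window truncation near $x=1$ (harmless since $\supp(w)\subset[0,1)$) and the regions where $g$ is nearly flat or jumps contributing only the $O(n^{-2})$ remainder. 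A local linearisation $g(\xi)\approx g(x)+g'(x)(\xi-x)$ in the spirit of Corollary~\ref{CorAVAR} would in fact give the sharp asymptotic constant $\int_0^1\sqrt{\pi g'(x)/2}\,w(x)^2dx\le\sqrt{\pi/2}\,\Delta^{1/2}\norm{w}_\infty^2$, but the coarser bound above already suffices.

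The machinery of the first step is identical to the H\"older case, so the real obstacle is the second: the class ${\cal M}$ contains functions with arbitrarily steep pieces, genuine jumps, and long flat stretches, so there is no uniform modulus of continuity, and the deviation exponent $n\int_x^1(s-(g(\xi)-g(x)))_+d\xi$ must be controlled from below purely through the monotone structure. Producing a bound that is simultaneously uniform in $s\ge0$, uniform over ${\cal M}$, and integrable against $w^2$ up to the endpoint $x=1$ — and then extracting a clean constant from it — is where the work lies. A minor technical point is the closability of $\bar M$ (equivalently $\E[\langle\bar M\rangle_\infty]<\infty$), which is only established a posteriori from the very variance estimate being proved, exactly as in Theorem~\ref{TheoMLE1}.
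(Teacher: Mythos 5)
The first half of your argument (the counting-process martingale, optional stopping, the variance identity, measurability of $\hte^{Mon}$ as a function of $\hat g^{Mon}$, and Lehmann--Scheff\'e) is exactly the paper's proof and is fine. The problem is in the variance bound, where you diverge from the paper, and where your key pointwise estimate is false. For general $g\in{\cal M}$ there is no ``sharper triangle-type estimate of order $ns^2/\delta_h(x)$'' in the regime $s\le\delta_h(x)$: that refinement requires the increment of $g$ over $[x,x+h]$ to accrue roughly linearly, whereas a monotone function may realise all of $\delta_h(x)$ by a jump (or a very steep rise) immediately to the right of $x$. In that case $g(\xi)-g(x)\ge\delta_h(x)>s$ for every $\xi>x$, so the exponent $n\int_x^1(s-(g(\xi)-g(x)))_+d\xi$ is zero and $P(\hat g^{Mon}(x)-g(x)\ge s)=1$ for all $s<\delta_h(x)$; hence $\E[\hat g^{Mon}(x)-g(x)]\ge\delta_h(x)$, which can be arbitrarily larger than your claimed $\sqrt{\delta_h(x)/(nh)}+(nh)^{-1}$. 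The accompanying assertion that jumps and flat pieces only contribute $O(n^{-2})$ is exactly the point that needs proof and is not established. This is the genuine difficulty of the monotone case that the paper resolves differently: it bounds $\sqrt{n}\,\E[\hat g^{Mon}(x)-g(x)]\lesssim\sqrt{g'_\infty(x)}$ in terms of the maximal function $g'_\infty(x)=\sup_{h}(g(x+h)-g(x+))/h$, proves a weak-$L^1$ estimate $\abs{\{g'_\infty\ge\zeta\}}\le\zeta^{-1}(g(1)-g(0))$ by a covering argument, and integrates $\sqrt{g'_\infty}$ through the layer-cake formula; the jumps are absorbed into the weak-type inequality rather than dismissed.

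Your windowing strategy is nevertheless salvageable, and in a simpler way than you propose. The part of your survival bound that \emph{is} correct, $P(\hat g^{Mon}(x)-g(x)\ge s)\le e^{-nh(s-\delta_h(x))_+}$, integrates in $s$ to the crude but valid estimate $\E[\hat g^{Mon}(x)-g(x)]\le\delta_h(x)+(nh)^{-1}$. Integrating this in $x$ over $\supp(w)\subset[0,1-\eps]$ with a single window width $h\le\eps$ and using only the telescoping identity $\int_0^{1-\eps}\delta_h(x)\,dx\le h\,(g(1)-g(0))$ (no Cauchy--Schwarz needed) gives $\frac1n\int\E[\hat g^{Mon}-g]w^2\le\norm{w}_\infty^2\,n^{-1}(h\Delta+(nh)^{-1})$ with $\Delta=g(1)-g(0)$, and the choice $h\thicksim(n\Delta)^{-1/2}$ yields $\Var(\hte^{Mon})\le 2\Delta^{1/2}\norm{w}_\infty^2n^{-3/2}+O(n^{-2})$ --- a constant $2<\sqrt{3\pi/2}$, so this even slightly improves the stated bound. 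So the repair is to drop the refined small-$s$ estimate entirely rather than to justify it; as written, though, the step on which your constant rests does not hold over ${\cal M}$.
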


\begin{remark}\mbox{}
\begin{enumerate}
\item  For $w\in L^p$ with $p>4$  the proof below still yields the rate $n^{-1/2}$ using H\"older's inequality instead of a supremum norm bound. For monotonous $g$ with bounded weak derivative $g'$, i.e. Lipschitz-continuous $g$, the asymptotic constant turns out to be exactly $\sqrt{\pi/2}\int_0^1w(x)^2\sqrt{g'(x)}\,dx$ by a dominated convergence argument.

\item Concerning the support of $w$, the proof shows that the remainder $O(n^{-2})$ is in fact  $2n^{-2}(1-\sup\{x:w(x)\not=0\})^{-1})$ and for varying weight functions $w_n$ we may allow a shrinking distance $\eps_n$ of $\supp(w_n)$ from $1$ such that $\eps_n n^{1/2}\to\infty$, implying a negligible order compared to $n^{-3/2}$.

\item The rate $n^{-3/2}$ is minimax optimal for the mean squared error over the class ${\cal M}$. This follows by adapting the proof of Theorem \ref{ThmLower} for the Lipschitz case $\beta=1$. We may just add to $g(x)$ a linear slope $Ax$ with $A>2cR$ such that $g\in {\cal M}$ holds for any realisation of $(\eps_k)$.
\end{enumerate}
\end{remark}

\begin{proof}
The proof follows along the lines of the proof for Theorem \ref{TheoMLE1}. Here the weighted counting process is
\[\bar N(t)=\sum_{j\ge 1}{\bf 1}\Big(Y_j\le t\wedge \min_{i:X_i\ge X_j}Y_i\Big)w(X_j), \quad t\in\R.\]
Its intensity is $\bar\lambda_t=n\int_0^1\int_{[g(x),t]} {\bf 1}(\min_{i:X_i\ge x} Y_i\ge s)dsdx$ and compensation yields the corresponding martingale $\bar M(t)$. The same limiting arguments, by restriction to the support of $w$, then yield again $\E[\hte^{Mon}]=\theta$ and
\[\Var(\hte^{Mon})=\frac 1n\int_0^1 \E[\hat g^{Mon}(x)-g(x)]w(x)^2dx.\]
It remains to estimate the last expectation. Suppose $w(x)=0$ for $x>1-\eps$ and some $\eps>0$ and let
\[g'_\infty(x):=\sup_{0<h\le 1-x}\frac{g(x+h)-g(x+)}{h}\in[0,\infty],\quad g(x+)=\lim_{y\downarrow x}g(y),\,x\in[0,1-\eps],\]
be the maximal function for the measure-valued derivative of $g$. Then for $x\in[0,1-\eps]$ with $g_\infty'(x)\in(0,\infty)$
\begin{align*}
\sqrt{n}\E[\hat g^{Mon}(x)-g(x)]
&=\int_0^\infty P(\hat g^{Mon}(x)-g(x)\ge sn^{-1/2})\,ds\\
&=\int_0^\infty\exp\Big(-n\int_x^1(sn^{-1/2}+g(x)-g(\xi))_+d\xi\Big)\,ds\\
&=\int_0^\infty\exp\Big(-\int_0^{n^{1/2}(1-x)}\Big(s+n^{1/2}(g(x)-g(x+n^{-1/2}u))\Big)_+du\Big)\,ds\\
&\le\int_0^\infty\exp\Big(-\int_0^{n^{1/2}(1-x)\wedge s/g'_\infty(x)}(s-g'_\infty(x)u)\,du\Big)\,ds\\
&\le\int_0^\infty\exp\Big(-s(n^{1/2}(1-x)\wedge s/g'_\infty(x))/2 \Big)\,ds\\
&\le(\pi g'_\infty(x)/2)^{1/2}+2n^{-1/2}\eps^{-1}e^{-n\eps^2/(2g'_\infty(x))},
\end{align*}
which trivially continues to hold with obvious extension if $g'_\infty(x)\in\{0,\infty\}$. We shall now establish a weak-$L^1$-estimate for $g_\infty'$ by adapting and improving (in the constant) classical results  \cite[Thm. 7.4]{rudin}.  For $\zeta>0$ we  define
\[ B_\zeta=\{x\in[0,1-\eps]\,|\,g'_\infty(x)\ge \zeta\}\]
and we shall prove $\abs{B_{\zeta}}\le \zeta^{-1}\norm{g}_{BV}$ with $\abs{B_\zeta}$ denoting the Lebesgue measure of $B_\zeta$, $\norm{g}_{BV}=g(1)-g(0)$.

To this end we construct a family $(x_i,h_i)_{i\in J}$, $J$ some countable set,  in $B_\zeta\times\R^+$ with $g(x_i+h_i)-g(x_i)\ge\zeta h_i$ and $[x_i,x_i+h_i)\cap [x_j,x_j+h_j)=\varnothing$ for all $i\not=j$ such that $\bigcup_{i\in J}[x_i,x_i+h_i)\supset B_\zeta$ holds. We proceed by (transfinite) recursion: since $x\mapsto g(x+)$ is right-continuous, so is $g'_\infty$ and thus  $x_0:=\inf B_\zeta$ lies in $B_\zeta$ such that there is some $h_0>0$ with $g(x_0+h_0)-g(x_0)\ge\zeta h_0$. Then define $x_1:=\min (B_\zeta\setminus[x_0,x_0+h_0))$, which is again in $B_\zeta$ by right-continuity such that some $h_1>0$ exists with $g(x_1+h_1)-g(x_1)\ge\zeta h_1$ and so on. Having thus defined $(x_i,h_i)_{i\in I'}$ for all (possibly infinite) ordinal numbers $I'$ smaller than some given ordinal $I$ we add $x_I=\inf (B_\zeta\cap\bigcap_{i\in \cup I'}[x_i+h_i,1-\eps])\in B_\zeta$ with some corresponding $h_I$ until $B_\zeta$ is exhausted by $\bigcup_{i\in I}[x_i,x_i+h_i)$. Then we just estimate
\[ g(1)-g(0)\ge \sum_{i\in J}g(x_i+h_i)-g(x_i)\ge \zeta \sum_{i\in J}h_i\ge \zeta\abs{B_\zeta}.\]

Using $\E[\hat g^{Mon}(x)-g(x)]\le (\pi g_\infty'(x)/(2n))^{1/2}+2n^{-1}\eps^{-1}$  and  for $a>0$ the integral bound
\[\int_0^{1-\eps} g_\infty'(x)^{1/2}dx\le \int_0^{1/a} z^{-1/2}d\abs{B_{z^{-1}}}+a^{1/2}(1-\eps)\le \norm{g}_{BV}\int_0^{1/a}z^{-1/2}dz+a^{1/2},\]
derived from $\abs{B_{z^{-1}}}\le z\norm{g}_{BV}$, we obtain with $a=\norm{g}_{BV}$
\begin{align*}
 \int_0^{1-\eps}\E[\hat g^{Mon}(x)-g(x)]\,dx
 &\le  (3\pi\norm{g}_{BV})^{1/2}(2n)^{-1/2}+O(n^{-1})
 \end{align*}
for $g\in{\cal M}$. The assertion follows by pulling $\norm{w}_\infty$ out of the integral.
\end{proof}

In the regression-type model the nonparametric MLE over $\cal M$ is likewise $\hat g^{Mon-regr}(x)=\min_{i:x\le i/n}\y_i$. Then  by the same arguments
\[ \hat\theta_n^{Mon-regr}:=\frac1n\sum_{i=1}^{n}\Big( \hat g^{Mon-regr}(i/n)-\lambda^{-1}{\bf 1}\Big(\hat g^{Mon-regr}(i/n)=\y_i\Big)\Big)w(i/n)\]
is an unbiased estimator of $\frac1{n}\sum_{i=1}^{n}g(i/n)w(i/n)$ under $\Exp(\lambda)$-noise with
$\Var(\hat\theta_n^{Mon-regr})=\frac{1}{n^2\lambda}\sum_{i=1}^{n}\E[\hat g^{Mon-regr}(i/n)-g(i/n)]w(i/n)^2$. Note that at the right end-point
$\E[\hat g^{Mon-regr}(1)-g(1)]=\lambda^{-1}$ holds, but that summand  only contributes $(n\lambda)^{-2}w(1)^2$ to the total variance which is usually negligible.

\section{Discussion}

An important application for the estimation of functionals are orthogonal series estimators, also called projection estimators. Let $(\phi_m)_{m\ge 1}$ be an orthonormal basis of $L^2([0,1])$. Then we can form the estimator $\hat g_M=\sum_{m=1}^M \hat\theta_m\phi_m$ of $g$ where $\hat\theta_m$ estimates the coefficient $\scapro{g}{\phi_m}_{L^2}$, i.e. $w=\phi_m$ in our notation. Using our estimators for $g\in\cc^\beta(R)$ we thus obtain as stochastic error in the $L^2$-risk:
\[ \E\Big[\norm{\hat g_M-\E[\hat g_M]}_{L^2}^2\Big]=\sum_{m=1}^M \Var(\hat\theta_m)\lesssim Mn^{-(2\beta+1)/(\beta+1)}.\]
For $L^2$-Sobolev spaces $H^s$ of regularity $s$ and standard bases like (trigonometric) polynomials, splines or wavelets we have the bias bound
$\sum_{m>M}\scapro{g}{\phi_m}^2\lesssim M^{-2s}$.  We always have $g\in \cc^\beta(R)\Rightarrow g\in H^\beta$ such that
\[  \E[\norm{\hat g_M-g}_{L^2}^2]\lesssim M^{-2\beta}+Mn^{-(2\beta+1)/(\beta+1)}\thicksim n^{-2\beta/(\beta+1)}\text{ for } M\thicksim n^{1/(\beta+1)}
\]
follows. This seems to be the first rate-optimal estimation result for series estimators in one-sided regression, cf. \cit{girardjacob}, \cit{jirakmeisterreiss}  for (optimal) rates and other approaches in the literature. We may, of course, also have $g\in H^s$ for some $s>\beta$, but then the derived rate is slower than the optimal $n^{-2s/(2s+1)}$. The unbiased estimation method essentially relies on a uniform control of the variation of $g$ and we do not know whether similar results can be obtained for Sobolev (or Besov)  instead of H\"older balls.

Concerning the function class ${\cal G}$ over which the nonparametric MLE is feasible and for which the derived estimator of $\theta$ exhibits  nice non-asymptotic properties, it was only essential for the stopping arguments as well as the completeness property that constants lie in $\cal G$ and that for $g_1,g_2\in{\cal G}$ also $g_1\wedge g_2$ and $g_1\vee g_2$ are in ${\cal G}$. Thus also ${\cal G}={\cal M}\cap \cc^\beta(R)$ or extensions to the multivariate case ${\cal G}=\cc_d^\beta(R)=\{g:[0,1]^d\to\R\,|\, \abs{g(x)-g(y)}\le R\abs{x-y}^\beta\}$ are possible. For smoothness degrees $\beta>1$ or other shape constraints like convexity our method does not transfer directly, but may possibly be adapted, see e.g. \cit{baldinreiss}, where also the intensity $\lambda$ is estimated.


\begin{figure}[t]\begin{center}
\begin{minipage}[t]{0.49\textwidth}
\includegraphics[width=\textwidth]{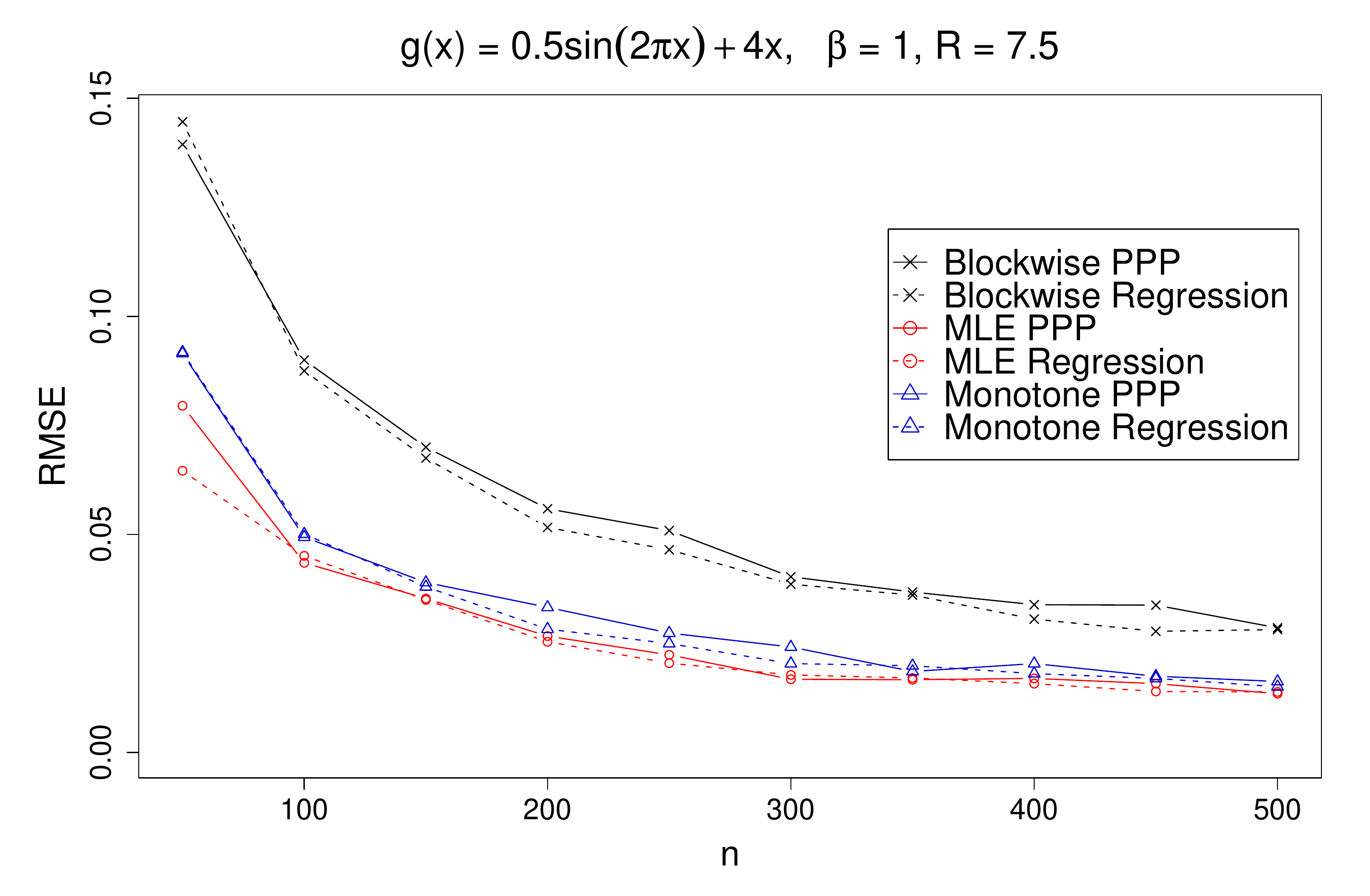}
\end{minipage}
\begin{minipage}[t]{0.49\textwidth}
\includegraphics[width=\textwidth]{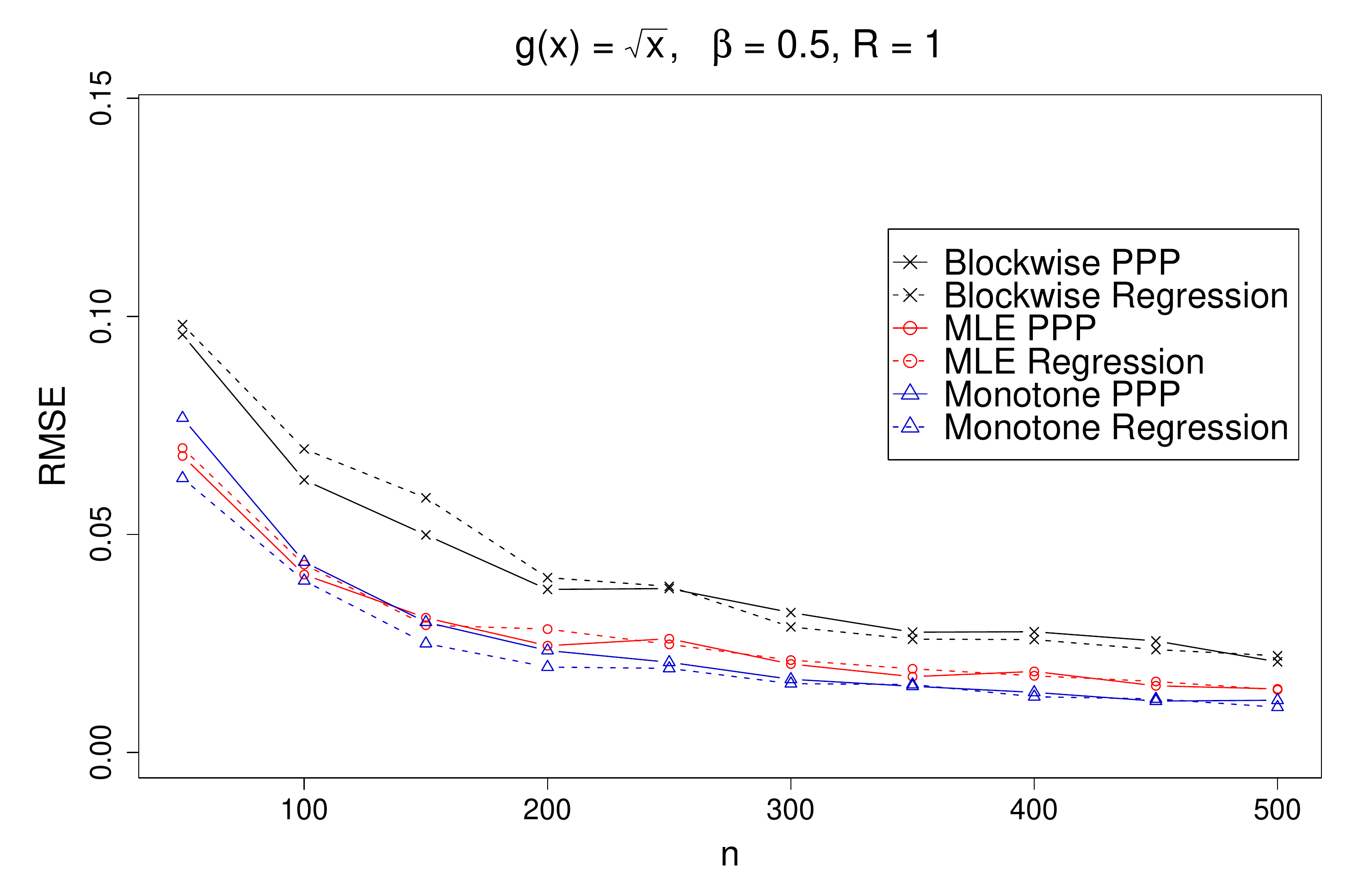}
\end{minipage}\end{center}\vspace{-10mm}
\caption{Monte-Carlo errors for the different estimators and two functions $g$}\label{fig-sim}\end{figure}

Finally, in a small simulation example we investigate the behaviour of the blockwise estimator, the MLE and the monotone MLE for $\theta=\int_0^1g(x)dx$ on finite samples. We simulate the PPP model as well as the regression-type model with $\Exp(1)$-distributed noise. For two different monotone regression functions $g$ the RMSE (root mean squared error) is  estimated in $M=200$ Monte Carlo repetitions.
On the left-hand side of Figure \ref{fig-sim} the RMSE results for $g(x)=0.5\sin(2\pi x)+4x$ are shown and on the right-hand side those for $g(x)=\sqrt x$. It can be seen that all three estimators work well even for the small sample size  $n=50$ and that their performances in the PPP and the regression model are comparable.

The blockwise estimator does not perform so much worse than the ML estimators. From our theoretical results this is to be expected: the ratio of the upper bounds for the nonasymptotic variance of $\hat\theta^{MLE}$ and of $\hat\theta^{block}$ is given by
\[ \frac{\Gamma(\beta/(\beta+1))\beta(\beta+1)^{-1/(\beta+1)}}{\beta^{-\beta/(\beta+1)}(\beta+1)}=\Gamma(\beta/(\beta+1))\beta^{(2\beta+1)/(\beta+1)}(\beta+1)^{-(\beta+2)/(\beta+1)},\]
which approaches one for $\beta\downarrow 0$, has a minimum $0.54$ at $\beta\approx 0.47$ and then increases to about $0.63$ for $\beta\uparrow 1$. Note, however, that both upper bounds are not tight. Since the simple blockwise approach is faster to compute, which is particularly relevant for any adaptive estimator, and is theoretically easier to analyse than the MLE (especially for the CLT, but also for an adaptive procedure), we conclude that both approaches are attractive and should be considered in their own right.


\section{Appendix}
\subsection{Technical results}

We formulate a  stopping theorem for continuous-time martingales, which does not seem readily available in the literature.

\begin{lemma}\label{LemStop}
Let $(M(t),t\ge t_0)$ be a c\`adl\`ag martingale with $M(t_0)=0$ and let $\tau$ be a stopping time with values in $[t_0,\infty]$, both on some filtered probability space. If $\E[\langle M\rangle_\tau]$ is finite, then $\E[M(\tau)]=0$ and $\E[M(\tau)^2]=\E[\langle M\rangle_\tau]$ hold.
\end{lemma}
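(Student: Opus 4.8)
The plan is to reduce the statement to the $L^2$-convergence theorem for square-integrable martingales, applied to the stopped process $N(t):=M(t\wedge\tau)$, $t\ge t_0$. First I would recall that stopping a c\`adl\`ag martingale at a stopping time again yields a c\`adl\`ag martingale, so $N$ is a martingale with $N(t_0)=0$, and that its predictable quadratic variation is the stopped one, $\langle N\rangle_t=\langle M\rangle_{t\wedge\tau}$. Here I use implicitly that $M$ is locally square-integrable, so that $\langle M\rangle$ is well defined; in all applications in the paper $M$ is a compensated counting process with jumps locally bounded, hence locally square-integrable.

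The crucial step is to show that $N$ is bounded in $L^2$. Choosing a localizing sequence $(\sigma_m)$ with $\sigma_m\uparrow\infty$ for which $M^{\sigma_m}$ is a square-integrable martingale, the process $M(\cdot\wedge\tau\wedge\sigma_m)^2-\langle M\rangle_{\cdot\wedge\tau\wedge\sigma_m}$ is a martingale vanishing at $t_0$, whence $\E[M(t\wedge\tau\wedge\sigma_m)^2]=\E[\langle M\rangle_{t\wedge\tau\wedge\sigma_m}]\le\E[\langle M\rangle_\tau]$ for every $t$ and $m$. Letting $m\to\infty$, Fatou's lemma on the left and monotone convergence on the right give $\E[N(t)^2]\le\E[\langle M\rangle_{t\wedge\tau}]\le\E[\langle M\rangle_\tau]<\infty$. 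Thus every $N(t)$ lies in $L^2$ and $\sup_{t\ge t_0}\E[N(t)^2]<\infty$; consequently $N$ is a genuine square-integrable martingale, for which the $L^2$-isometry yields the exact identity $\E[N(t)^2]=\E[\langle N\rangle_t]=\E[\langle M\rangle_{t\wedge\tau}]$.

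By the martingale convergence theorem an $L^2$-bounded martingale converges a.s.\ and in $L^2$ to a limit $N(\infty)$. On $\{\tau<\infty\}$ one has $N(t)=M(\tau)$ for $t\ge\tau$, hence $N(\infty)=M(\tau)$ there; on $\{\tau=\infty\}$ the a.s.\ limit $\lim_{t\to\infty}M(t)$ exists and is, by definition, $M(\tau)$. Passing to the limit $t\to\infty$: from $L^1$-convergence, $\E[M(\tau)]=\lim_t\E[N(t)]=\E[N(t_0)]=0$; from $L^2$-convergence together with the monotone convergence $\langle M\rangle_{t\wedge\tau}\uparrow\langle M\rangle_\tau$, $\E[M(\tau)^2]=\lim_t\E[N(t)^2]=\lim_t\E[\langle M\rangle_{t\wedge\tau}]=\E[\langle M\rangle_\tau]$.

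The main obstacle is the $L^2$-boundedness step: one must obtain the isometry $\E[N(t)^2]=\E[\langle M\rangle_{t\wedge\tau}]$ without first knowing $N$ is square-integrable, which is why the localization-plus-Fatou detour is needed; one also has to check that all the limiting operations are compatible, in particular the identification of $N(\infty)$ with $M(\tau)$ under the convention $M(\infty):=\lim_{t\to\infty}M(t)$ on $\{\tau=\infty\}$. The remaining arguments are routine bookkeeping.
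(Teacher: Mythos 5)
Your proof is correct, and it reaches the conclusion by a genuinely different route than the paper. The paper's argument is a one-liner built on the Burkholder--Davis--Gundy inequality: together with $\E[[M]_\tau]=\E[\langle M\rangle_\tau]$ it gives the maximal bound $\E[\sup_{t\ge t_0}M_{t\wedge\tau}^2]\lesssim\E[\langle M\rangle_\tau]<\infty$, so that $(M_{t\wedge\tau})$ and $(M_{t\wedge\tau}^2)$ are dominated, hence uniformly integrable, and the identities follow by letting $t\to\infty$ in the optional stopping relations. You instead avoid BDG entirely: you establish the isometry $\E[M(t\wedge\tau)^2]\le\E[\langle M\rangle_{t\wedge\tau}]$ by localization plus Fatou, conclude that the stopped process is an $L^2$-bounded martingale, and then invoke the $L^2$-martingale convergence theorem to identify the limit with $M(\tau)$ (including the necessary convention on $\{\tau=\infty\}$, which the paper also uses implicitly when applying the lemma with $\tau=\infty$). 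Your route is more elementary in its toolbox but requires the localization--Fatou detour to get the isometry off the ground before square-integrability of the stopped process is known; the paper's BDG route buys the stronger maximal inequality in one step, at the cost of citing a deeper result. Both arguments tacitly assume $M$ is locally square-integrable so that $\langle M\rangle$ is defined -- you are right to flag this, and it is indeed satisfied by all the compensated counting processes to which the lemma is applied.
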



\begin{proof}
From the Burkholder-Davis-Gundy inequality (Thm. 26.12 in \cit{kallenberg}) and the identity $\E[[M]_\tau]=\E[\langle M\rangle_\tau]$ (e.g. by Prop. 4.50(c) in \cit{jacodshiryaev} for $M^\tau$), we conclude $\E[\sup_{t\ge t_0}M_{t\wedge\tau}^2]\lesssim \E[\langle M\rangle_\tau]$. Hence, $(\abs{M_{t\wedge\tau}}^p)_{t\ge t_0}$, $p\in\{1,2\}$, is uniformly integrable and by optional stopping $\E[M_\tau]=\lim_{t\to\infty}\E[M_{\tau\wedge t}]=0$ follows as well as
$\E[M_\tau^2]=\E[[M]_\tau]=\E[\langle M\rangle_\tau]$.
\end{proof}

A moment bound for the stopping time in the proof of Theorem \ref{ThmRegr} is provided.

\begin{lemma}\label{LemTauMom}
Under the assumptions of Theorem \ref{ThmRegr} we have for $\tau=\mY+Rh^\beta$
\[\E[(\tau-g(i/n))^p]^{1/p}\lesssim Rh^\beta+(n\lambda h)^{-1}\]
as $nh\to\infty$ for any $p>0$.
\end{lemma}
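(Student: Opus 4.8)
The plan is to sandwich $\tau-g(i/n)$ between a deterministic multiple of $Rh^\beta$ and the blockwise minimal error, thereby reducing the claim to a classical order-statistics estimate. First I would record an elementary two-sided bound. Since $g\in\cc^\beta(R)$ and the index block $\tilde I_k$ has length $h$ (with $|\tilde I_k|=nh$), we have $|g(j/n)-g(i/n)|\le Rh^\beta$ for all $i,j\in\tilde I_k$; hence $\mY=\min_{j\in\tilde I_k}(g(j/n)+\eps_j)\ge\min_j g(j/n)\ge g(i/n)-Rh^\beta$, so that $\tau-g(i/n)=\mY+Rh^\beta-g(i/n)\ge0$, and on the other hand $\mY\le\max_j g(j/n)+\min_{j\in\tilde I_k}\eps_j$, so that with $M_0:=\min_{j\in\tilde I_k}\eps_j\ge0$ we obtain $0\le\tau-g(i/n)\le 2Rh^\beta+M_0$. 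By the triangle inequality in $L^p$ (for $p\ge1$; by $(a+b)^p\le a^p+b^p$ for $0<p<1$) it then suffices to prove $\E[M_0^p]^{1/p}\lesssim(n\lambda h)^{-1}$, with an implied constant depending only on $p$, $\lambda$, $\rho$ and the constants appearing in \eqref{EqPeps} and in the tail hypothesis.

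Second, using that the $\eps_j$, $j\in\tilde I_k$, are i.i.d., I would write $\E[M_0^p]=p\int_0^\infty s^{p-1}\bar F_\eps(s)^{nh}\,ds$. Assumption \eqref{EqPeps} provides a threshold $\delta'>0$ (depending only on $\lambda$ and the $O$-constant) with $F_\eps(x)\ge\frac\lambda2 x$, hence $\bar F_\eps(x)\le e^{-\lambda x/2}$, for $x\in[0,\delta']$. The contribution of $[0,\delta']$ is therefore at most $p\int_0^\infty s^{p-1}e^{-\lambda nhs/2}\,ds=\Gamma(p+1)(2/(\lambda nh))^p\asymp(n\lambda h)^{-p}$, which is the leading term.

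Third, on the tail $s>\delta'$ I would combine $q:=\bar F_\eps(\delta')\in(0,1)$ (monotonicity of $\bar F_\eps$) with the polynomial tail bound $\bar F_\eps(s)\lesssim(1+s)^{-\rho}$: splitting $\bar F_\eps(s)^{nh}\le q^{nh/2}\,\bar F_\eps(s)^{nh/2}$ for $s\ge\delta'$ and bounding the remaining factor by a constant times $(1+s)^{-\rho nh/2}$, one sees that as soon as $nh>2p/\rho$ the integral $\int_{\delta'}^\infty s^{p-1}\bar F_\eps(s)^{nh/2}\,ds$ is finite and bounded (indeed non-increasing) in $nh$, so that the tail contribution is $\lesssim q^{nh/2}$ and decays geometrically as $nh\to\infty$; in particular it is $o((n\lambda h)^{-p})$. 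Combining the two pieces gives $\E[M_0^p]\lesssim(n\lambda h)^{-p}$ for $nh$ large, and taking $p$-th roots together with the first paragraph yields the assertion.

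The only slightly delicate point is the third step: the polynomial tail alone does not make $\int s^{p-1}\bar F_\eps(s)^{nh}\,ds$ converge for every $p$, so one has to spend part of the power $nh$ on the geometric factor $q^{nh/2}$ coming from the bounded error density near zero and the other part on forcing integrability of $(1+s)^{-\rho nh/2}$; this argument also makes transparent that all constants stay uniform in $n,h,R,\beta$. The remaining computations are routine.
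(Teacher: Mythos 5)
Your proof is correct and follows essentially the same route as the paper's: both reduce the claim, via the H\"older oscillation bound $2Rh^\beta$, to showing that the block minimum $\min_{j\in\tilde I_k}\eps_j$ has $p$-th moment of order $(n\lambda h)^{-p}$, using \eqref{EqPeps} for the near-zero (exponential) regime and the polynomial tail bound $\bar F_\eps(y)\lesssim(1+y)^{-\rho}$ to control the far tail. The only difference is cosmetic: you estimate $p\int_0^\infty s^{p-1}\bar F_\eps(s)^{nh}\,ds$ directly by splitting at $\delta'$, whereas the paper packages the same two ingredients as convergence of the rescaled survival function to $e^{-z}$ plus a uniform integrability argument.
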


\begin{proof}
The property $\mY\le \max_{i\in\tilde I_k}g(i/n)+\min_{i\in\tilde I_k}\eps_i$ implies for $nh\to\infty$
\[ P\Big(n\lambda h(\mY-\max_{i\in\tilde I_k}g(i/n))\ge z\Big)\le \bar F_\eps(z/n\lambda h)^{nh}=e^{nh\log\bar F_\eps(z/n\lambda h)}\to e^{-z}.
\]
Using $\bar F_\eps(z/nh)^{nh}\lesssim (1+z/nh)^{-nh\rho}$, we establish
\[\lim_{R\to\infty}\sup_{n,h}\int_R^\infty z^{p-1}P\Big(n\lambda h\Big(\mY-\max_{i\in\tilde I_k}g(i/n)\Big)\ge z\Big)\,dz= 0\]
 for any $p\ge 1$ such that by uniform integrability
\[\limsup_{nh\to\infty}\E\Big[\Big(n\lambda h\abs{\mY-\max_{i\in\tilde I_k}g(i/n)}\Big)^p\Big]\le \int_0^\infty z^pe^{-z}dz<\infty\]
follows.
By the H\"older condition $g$ varies at most by $Rh^\beta$ on each block and thus
$\E[(\tau-\min_{i\in\tilde I_k}g(i/n))^p]\lesssim (Rh^\beta+(n\lambda h)^{-1})^p$ holds.
\end{proof}

We need the following interesting self-normalising property. The constant is certainly not optimal.

\begin{lemma} \label{LemSelfnorm}
Suppose that a non-negative random variable $X$ satisfies $P(X\ge x)=e^{-a(x)}$, $x\ge 0$, with a strictly increasing convex function $a$. Then $\E[X^2]\le 6(e+1)\E[X]^2$ holds.
\end{lemma}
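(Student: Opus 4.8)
The plan is to work entirely with the tail function via the layer‑cake formula: write $m:=\E[X]=\int_0^\infty P(X\ge x)\,dx=\int_0^\infty e^{-a(x)}\,dx$ and $\E[X^2]=2\int_0^\infty x\,P(X\ge x)\,dx=2\int_0^\infty x e^{-a(x)}\,dx$. Since $P(X\ge 0)=1$ forces $a(0)=0$, and since $a$ strictly increasing and convex has a strictly positive right derivative at every $x>0$ (otherwise $a$ would be non‑increasing on an initial interval), one gets $a(x)\to\infty$; hence both integrals are finite and there is a unique level point $t_1>0$ with $a(t_1)=1$.

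Next I would extract the two consequences of convexity that drive the bound. First, on $[0,t_1]$ one has $a(x)\le 1$, so $e^{-a(x)}\ge e^{-1}$ and therefore $m\ge\int_0^{t_1}e^{-1}\,dx=t_1/e$, i.e.\ $t_1\le e\,m$. Second, because $a(0)=0$ and $a$ is convex, the map $x\mapsto a(x)/x$ is non‑decreasing; hence for $x\ge t_1$ we have $a(x)\ge (x/t_1)\,a(t_1)=x/t_1$, so $e^{-a(x)}\le e^{-x/t_1}$. This exponential domination of the tail beyond $t_1$ is the key step.

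Then I would split the second‑moment integral at $t_1$. On $[0,t_1]$ bound $e^{-a(x)}\le 1$ to get $2\int_0^{t_1}x e^{-a(x)}\,dx\le t_1^2$. On $[t_1,\infty)$ substitute $u=x/t_1$ and use $\int_1^\infty u e^{-u}\,du=2/e$ to obtain $2\int_{t_1}^\infty x e^{-x/t_1}\,dx=(4/e)\,t_1^2$. Adding the two pieces and inserting $t_1\le e\,m$ gives $\E[X^2]\le(1+4/e)\,t_1^2\le(1+4/e)e^2 m^2=e(e+4)\,m^2$. Since $e(e+4)\le 6(e+1)$, equivalently $e^2-2e-6\le 0$, which holds because $e<1+\sqrt7$, the claimed inequality $\E[X^2]\le 6(e+1)\E[X]^2$ follows (with room to spare).

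I do not expect a real obstacle; the only points needing a little care are the preliminary structural facts ($a(0)=0$, $a(x)\to\infty$, and the monotonicity of $a(x)/x$), which are all immediate from strict monotonicity together with convexity. As the computation shows, the constant $6(e+1)$ is not optimal — one actually gets $e(e+4)\approx 18.3$ — which is consistent with the remark preceding the lemma.
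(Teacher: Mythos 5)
Your proof is correct, and it takes a genuinely different route from the paper. The paper transforms $Y:=a(X)\sim\Exp(1)$, writes $\E[X]^2=(\E[a^{-1}(Y)])^2$ as a double integral, passes to the convolution $\int_0^\infty\int_0^z a^{-1}(x)a^{-1}(z-x)\,dx\,e^{-z}dz$, and uses the chord inequality for the concave function $a^{-1}$ (namely $a^{-1}(x)\ge \frac{x}{z}a^{-1}(z)$ for $0\le x\le z$) to get $\E[X]^2\ge\frac16\int_0^\infty z\,a^{-1}(z)^2e^{-z}dz$; a final shift of the integration variable by $1$ produces the factor $e+1$. You instead stay entirely with the tail function, split at the level point $t_1=a^{-1}(1)$, and use the same convexity input in the dual form $a(x)\ge x/t_1$ for $x\ge t_1$ to dominate the upper tail by an exponential. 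The two arguments are thus built on the same geometric fact (the chord inequality for $a$, equivalently for $a^{-1}$), but yours avoids the convolution identity, is more elementary, and yields the sharper constant $e(e+4)\approx 18.3$ versus $6(e+1)\approx 22.3$ — which, since the lemma is only used as an order-of-magnitude moment bound in the CLT proof, is perfectly adequate. All the preliminary facts you flag ($a(0)=0$ from $P(X\ge 0)=1$, continuity and divergence of $a$ guaranteeing existence and uniqueness of $t_1$, monotonicity of $a(x)/x$, and the evaluation $\int_1^\infty ue^{-u}du=2/e$) check out.
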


\begin{proof}
The property $P(a(X)\ge a(x))=e^{-a(x)}$ shows that $Y:=a(X)$ is $\Exp(1)$-distributed. Since the inverse $a^{-1}$ of $a$ exists, we may use $a^{-1}(0)=0$ and the concavity of $a^{-1}$ to calculate
\begin{align*}
\E[X]^2 &= \E[a^{-1}(Y)^2]=\int_0^\infty\int_0^\infty a^{-1}(x)a^{-1}(y)e^{-x-y}dydx\\
&= \int_0^\infty\int_0^z a^{-1}(x)a^{-1}(z-x)\,dx\,e^{-z}dz\\
&\ge  \int_0^\infty\int_0^z \frac{x}{z}a^{-1}(z)\frac{z-x}{z}a^{-1}(z)\,dx\,e^{-z}dz= \int_0^\infty \frac{z}{6}a^{-1}(z)^2e^{-z}dz.
\end{align*}
By monotonicity, we have $\int_1^2za^{-1}(z)^2e^{-z}dz\ge e^{-1}\int_0^1a^{-1}(z)^2e^{-z}dz$. This shows
\[ 6\E[X]^2\ge \frac{1}{e+1}\int_0^\infty a^{-1}(z)^2e^{-z}dz=\frac{1}{e+1}\E[X^2].\]
\end{proof}

\subsection{Proof of Theorem \ref{ThmLepski}}

For $h_m< h^\ast$ we infer from the deviation bound in Proposition \ref{Propcritval} below
\begin{align*}
&P(\hat h=h_m)\le \sum_{m'=1}^{m-1}\Big(P(\abs{\tilde\theta_{n,h_{m'}}^{block}-\theta^{(n)}}>\kappa_{m'})+P(\abs{\tilde\theta_{n,h_{m+1}}^{block}-\theta^{(n)}}>\kappa_{m+1})\Big)\\
&\le \sum_{m'=1}^{m-1}\Big(4n^{-2c}+n\bar F_\eps((c\log n-2)/(nh_{m'}))^{nh_{m'}}+n\bar F_\eps((c\log n-2)/(nh_{m+1}))^{nh_{m+1}}\Big)\\
&\lesssim M(n^{-2c}+n^{1-\underline\lambda c})
\end{align*}
for $\underline\lambda\in(0,\lambda)$ and $n$ sufficiently large. By Cauchy-Schwarz inequality we thus infer
\[ \E[(\tilde\theta_{n}^{block}-\theta^{(n)})^2{\bf 1}(\hat h<h^\ast)]\lesssim \E[(\tilde\theta_{n}^{block}-\theta^{(n)})^4]^{1/2} M(n^{-2c}+n^{1-\underline\lambda c})^{1/2}.
\]
From the exponential moment bound \eqref{EqExpMoment} and Lemma \ref{LemTauMom} we obtain that the fourth moment of the error remains bounded (even tends to zero) such that the first inequality follows.

By construction, we have for $h_{\hat m}:=\hat h>h^\ast=:h_{m^\ast}$ that $\abs{\tilde\theta_{n}^{block}-\tilde\theta_{n,h^\ast}^{block}}\le \kappa_{\hat m}+\kappa_{m^\ast}$ holds. Using $H_x(y)\approx xy^2$ and $h_m\gtrsim (\log n)^2n^{-1}$, we obtain
\[ \kappa_{\hat m}+\kappa_{m^\ast}\lesssim \frac{(\log n)^2}{(nh^\ast)^2}+\frac{\sqrt{\log n}}{n\sqrt{h^\ast}}\Big(1+\max_{h_m\ge h^\ast}h_m\sum_{k=0}^{h_m^{-1}-1}\#\{i\in\tilde I_{k,h_m}:\y_i\le \y_{k,h}^\ast+(nh_m)^{-1}\}\Big).
\]
For each fixed $h_m$ we have by  compensation of the block-wise counting process
\[ \E\Big[\Big(\sum_{k=0}^{h_m^{-1}-1}\#\{i\in\tilde I_{k,h_m}:\y_{k,h_m}^\ast<\y_i\le \y_{k,h_m}^\ast+(nh_m)^{-1}\}\Big)^2\Big]\le h_m^{-1}\sum_{k=0}^{h_m^{-1}-1}\E[A_k^2+A_k]\]
with
\[A_k=\sum_{i\in\tilde I_{k,h_m}}\int {\bf 1} \Big(\y_{k,h_m}^\ast<s+g(i/n)\le \y_{k,h_m}^\ast+(nh_m)^{-1}\Big) \frac{f_\eps(s)}{\bar F_\eps(s)}\,ds\le \norm{f_\eps/\bar F_\eps}_\infty\thicksim 1.\]
Since by definition $\#\{i\in\tilde I_{k,h_m}:\y_i\le \y_{k,h_m}^\ast\}=1$ a.s., we have
\[ \E\Big[\Big(h_m\sum_{k=0}^{h_m^{-1}-1}\#\{i\in\tilde I_{k,h_m}:\y_i\le \y_{k,h_m}^\ast+(nh_m)^{-1}\}\Big)^2]\lesssim 1.
\]
A (crude) bound for the maximum via the sum thus yields the second inequality:
\[ \E[(\tilde\theta_{n}^{block}-\tilde\theta_{n,h^\ast}^{block})^2{\bf 1}(\hat h\ge h^\ast)]\le \E[(\kappa_{\hat m}+\kappa_{m^\ast})^2{\bf 1}(\hat h>h^\ast)]\lesssim \frac{(\log n)^4}{(nh^\ast)^4}+\frac{M\log n}{n^2 h^\ast}.
\]
For the asymptotic rate just note that the geometric grid of bandwidths suffices to achieve $h^\ast\thicksim n^{-1/(\beta+1)}$ asymptotically such that inserting $\E[(\tilde\theta_{n,h^\ast}^{block}-\theta^{(n)})^2] \lesssim n^{-(2\beta+1)/(\beta+1)}+n^{-4\beta/(\beta+1)}$ from Theorem \ref{ThmRegr} and the triangle inequality yield the result, noting that the risk on $\{\hat h<h^\ast\}$ is negligible due to the choice of $c$.
It remains to prove the following deviation inequality.

\begin{proposition}\label{Propcritval}
For any $h,x,\kappa>0$ with $Rh^\beta\le (nh)^{-1}$, $2xh^{1/2}\norm{w}_\infty<1$ and $\kappa<(\delta -2Rh^{\beta})nh$ we have with probability at least $1-2e^{-2x^2}-h^{-1}\bar F_\eps(\kappa/(nh))^{nh}$ the bound
\begin{align*}
n\lambda h^{1/2}\abs{\tilde\theta_{n,h}^{block}-\theta^{(n)}}&\le \sum_{i=1}^n{\bf 1}(\y_i\le \tau^{(i)})H_x(h^{1/2}w(i/n))+C^2(\kappa+2)^2n^{-1}h^{-3/2} \norm{w}_{1}+x.
\end{align*}
\end{proposition}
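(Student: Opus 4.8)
The plan is to separate $n\lambda h^{1/2}(\tilde\theta_{n,h}^{block}-\theta^{(n)})$ into a bias contribution and a martingale contribution, to bound the first on a high-probability event via the $O(z^2)$ behaviour of $G_\eps$ near $0$, and to bound the second by an exponential (Dol\'eans--Dade) supermartingale inequality whose data-dependent ``variance proxy'' turns out, by a power-series matching, to be exactly $\sum_i{\bf 1}(\y_i\le\tau^{(i)})H_x(h^{1/2}w(i/n))$. With $\tau^{(i)}:=\min_{j\in\tilde I_{k_i}}\y_j+Rh^\beta$ the block-wise upper envelope through observation $i$'s block, multiplying the block identity \eqref{EqErrorMart} by $h$ and summing over the $h^{-1}$ blocks gives
\[ n\lambda h^{1/2}\big(\tilde\theta_{n,h}^{block}-\theta^{(n)}\big)=h^{1/2}\underbrace{\textstyle\sum_{i=1}^nG_\eps\big(\y_i\wedge\tau^{(i)}-g(i/n)\big)w(i/n)}_{=:B}-h^{1/2}\mathbb M_\infty, \]
where $\mathbb M(t):=\sum_{k=0}^{h^{-1}-1}M_k(t\wedge\tau_k)$ with $M_k,\tau_k$ as in \eqref{EqMartRegr}. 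By Lemma~\ref{LemStop} together with the moment bound Lemma~\ref{LemTauMom}, each stopped $M_k(\cdot\wedge\tau_k)$ is an $L^2$-bounded c\`adl\`ag $({\cal F}_t)$-martingale; since the blocks are independent, $\mathbb M$ is an $L^2$-bounded purely discontinuous martingale vanishing below $\min_ig(i/n)$, whose only jumps are $\Delta\mathbb M(\y_i)=w(i/n)$ at the ``active'' observations $\{i:\y_i\le\tau^{(i)}\}$, and $\mathbb M_\infty=\sum_kM_k(\tau_k)$.

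\emph{Bias term.} On the event $E_1:=\{\min_{j\in\tilde I_k}\eps_j\le\kappa/(nh)\text{ for all }k\}$, whose complement has probability $\le h^{-1}\bar F_\eps(\kappa/(nh))^{nh}$ by a union bound over the $h^{-1}$ blocks, the H\"older oscillation bound $\max_{j\in\tilde I_k}g(j/n)-\min_{j\in\tilde I_k}g(j/n)\le Rh^\beta$ forces $0\le\y_i\wedge\tau^{(i)}-g(i/n)\le 2Rh^\beta+\kappa/(nh)<\delta$ (the last step using $\kappa<(\delta-2Rh^\beta)nh$), hence $|G_\eps(\cdot)|\le C^2(2Rh^\beta+\kappa/(nh))^2\le C^2(\kappa+2)^2(nh)^{-2}$ because $Rh^\beta\le(nh)^{-1}$. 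Summing and using $\sum_i|w(i/n)|=n\norm{w}_1$ yields $h^{1/2}|B|\le C^2(\kappa+2)^2n^{-1}h^{-3/2}\norm{w}_1$ on $E_1$.

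\emph{Martingale term.} For $|\alpha|\,\norm{w}_\infty<1$ the stochastic exponential $\mathcal E(\alpha\mathbb M)$ is a strictly positive local martingale, hence a supermartingale with $\E[\mathcal E(\alpha\mathbb M)_\infty]\le1$; since only finitely many observations are active, the product formula gives $\mathcal E(\alpha\mathbb M)_\infty=\exp\big(\alpha\mathbb M_\infty+\sum_i{\bf 1}(\y_i\le\tau^{(i)})(\log(1+\alpha w(i/n))-\alpha w(i/n))\big)$. Markov's inequality at level $2x^2$ with $\alpha=2xh^{1/2}$ (admissible by $2xh^{1/2}\norm{w}_\infty<1$), divided by $2x$, yields $P\big(h^{1/2}\mathbb M_\infty>x+\tfrac1{2x}\sum_i{\bf 1}(\y_i\le\tau^{(i)})(2xh^{1/2}w(i/n)-\log(1+2xh^{1/2}w(i/n)))\big)\le e^{-2x^2}$. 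The decisive step is the term-by-term power-series comparison: for $|2xy|<1$, $2xy-\log(1+2xy)=\sum_{k\ge2}(-1)^k(2xy)^k/k\le\sum_{k\ge2}(2x|y|)^k/k=2xH_x(y)$, i.e. $\tfrac1{2x}(2xy-\log(1+2xy))\le H_x(y)$ (with equality for $y\le0$); applied to $y=h^{1/2}w(i/n)$ this replaces the data-dependent threshold by $x+\sum_i{\bf 1}(\y_i\le\tau^{(i)})H_x(h^{1/2}w(i/n))$. Running the same argument for $-\mathbb M$ (jumps $-w(i/n)$; the correction $-2xy-\log(1-2xy)=\sum_{k\ge2}(2xy)^k/k$ is again $\le2xH_x(y)$) controls the lower tail, and a union over the two one-sided events costs $2e^{-2x^2}$.

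Finally, intersecting $E_1$ with the two-sided martingale event---of probability $\ge1-2e^{-2x^2}-h^{-1}\bar F_\eps(\kappa/(nh))^{nh}$---and applying the triangle inequality to the displayed decomposition delivers the stated bound. The hard part will be the ``self-normalising'' identification of the Dol\'eans--Dade compensation term with $H_x$, which is exactly what makes the critical value $\kappa_m$ a bona fide statistic, together with the measure-theoretic bookkeeping that legitimises viewing $\mathbb M$ as a single martingale on $\R$: gluing the stopped block-martingales across independent blocks, a.s.\ finiteness of the active set, and the passage from a nonnegative local martingale to a supermartingale with $\E[\mathcal E(\alpha\mathbb M)_\infty]\le1$.
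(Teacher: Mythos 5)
Your proof is correct and follows essentially the same route as the paper: the same decomposition of $n\lambda h^{1/2}(\tilde\theta_{n,h}^{block}-\theta^{(n)})$ via \eqref{EqErrorMart} into the $G_\eps$-bias term (bounded on the event that each block minimum of the $\eps_j$ is at most $\kappa/(nh)$) and the stopped exponential-martingale/Chernoff bound with $\gamma=\pm 2xh^{1/2}$, including the term-by-term power-series comparison that identifies the compensation term with $H_x$. The only (cosmetic) deviation is that you obtain $\E[{\cal E}(\alpha\mathbb M)_\infty]\le 1$ from the nonnegative-local-martingale/supermartingale property of the Dol\'eans--Dade exponential, whereas the paper derives the exact identity \eqref{EqExpMoment} blockwise via the substitution rule and the stopping Lemma \ref{LemStop}; both suffice for the Markov step.
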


\begin{proof}
We consider  the  martingale $M(t)$ in \eqref{EqMartRegr} and the associated stopping rule $\tau$. By the substitution rule \cite[Thm. 26.7]{kallenberg} we obtain for $\gamma>-1/\norm{w}_\infty$ the exponential (local) martingale
\begin{align*}
{\cal E}(t) &=\exp\Big(\sum_{i\in\tilde I_k}{\bf 1}(\y_i\le t)\log(1+\gamma w(i/n))+\log(\bar F_\eps(\y_i\wedge t-g(i/N)))\gamma w(i/n)\Big)\\
&= \exp\Big(\gamma M(t)-\sum_{i\in\tilde I_k}{\bf 1}(\y_i\le t)\Big(\gamma w(i/n)-\log(1+\gamma w(i/n))\Big) \Big).
\end{align*}
We infer from $\abs{\tilde I_k}<\infty$ and the fact that $\bar F_\eps(\eps_i)\sim U([0,1])$ has finite $p$-moments for all $p>-1$ via Lemma \ref{LemStop} the stopping result
\begin{equation}\label{EqExpMoment}
 \E\Big[\exp\Big(\gamma M(\tau)-\sum_{i\in\tilde I_k}{\bf 1}(\y_i\le \tau)\Big(\gamma w(i/n)-\log(1+\gamma w(i/n))\Big) \Big)\Big]=1.
\end{equation}
Using representation \eqref{EqErrorMart}, the independence among blocks  yields $\E[e^{\gamma Z_\gamma}]=1$ for
\begin{align*}
Z_\gamma := &n\lambda(\tilde\theta_{n,h}^{block}-\theta^{(n)})-\sum_{i=1}^nG_\eps(\y_i\wedge\tau^{(i)}-g(i/n)) w(i/n)\\
& -\sum_{i=1}^n{\bf 1}(\y_i\le \tau^{(i)})\Big(w(i/n)-\frac{\log(1+\gamma w(i/n))}{\gamma}\Big).
\end{align*}
We choose $\gamma=\pm 2x h^{1/2}$ and obtain
by Markov inequality  $P( h^{1/2} Z_{2x h^{1/2}}\ge x)\le e^{-2x^2}$, $P( h^{1/2} Z_{-2x h^{1/2}}\le- x)\le e^{-2x^2}$ such that with probability $1-2e^{-2x^2}$
\begin{align}
&\babs{n\lambda h^{1/2}(\tilde\theta_{n,h}^{block}-\theta^{(n)})-h^{1/2}\sum_{i=1}^nG_\eps(\y_i\wedge\tau^{(i)}-g(i/n)) w(i/n)}\nonumber\\
&\qquad \le \sum_{i=1}^n{\bf 1}(\y_i\le \tau^{(i)})H_x(h^{1/2}w(i/n))+x.\label{EqHP1}
\end{align}

From $\abs{G_\eps(z)}\le C^2 z^2$ for $z\in[0,\delta]$ and $Rh^\beta\le (nh)^{-1}$ we infer
\begin{align*}
&P\Big(\max_{i\in\tilde I_k}\abs{G_\eps(\y_i\wedge \tau-g(i/n))}>C^2(\kappa+2)^2/(nh)^2\Big)\\
&\le P\Big(\max_{i\in\tilde I_k}(\y_i\wedge \tau-g(i/n))>(\kappa+2)/(nh)\Big)\\
&\le P\Big(\min_{i\in\tilde I_k}\eps_i+2(nh)^{-1}>(\kappa+2)/(nh)\Big)= \bar F_\eps\Big(\kappa/(nh)\Big)^{nh}.
\end{align*}
We thus obtain with probability $1-\bar F_\eps(\kappa/(nh))^{nh}$ the bound
\[ \babs{\sum_{i\in\tilde I_k}G_\eps(\y_i\wedge\tau-g(i/n))w(i/n)}\le \frac{C^2(\kappa+2)^2}{(nh)^2} \sum_{i\in\tilde I_k}\abs{w(i/n)}.\]
Summing over the $h^{-1}$ blocks implies with probability $1-h^{-1}\bar F_\eps(\kappa/(nh))^{nh}$
\[ h^{1/2}\babs{\sum_{i=1}^n G_\eps(\y_i\wedge\tau^{(i)}-g(i/n))w(i/n)}\le C^2(\kappa+2)^2n^{-1}h^{-3/2}\norm{w}_{1}.\]
In view of \eqref{EqHP1} this yields the result.
\end{proof}

\subsection{Proof of Theorem \ref{ThmCLT1}}

Let $r_n\to 0$ such that $r_n^{3}n\to\infty$ and $r_n^{-1}\in\N$. On each block $J_l=[lr_n,(l+1)r_n)$, $l=0,\ldots,r_n^{-1}-1$, we can define the blockwise $\cc^1(R)$-MLE
\[ \hat g^{MLE}_l(x)=\min_{i:X_i\in J_l}(Y_i+R\abs{x-X_i}),\quad x\in J_l.\]
Note that by definition the blockwise MLE is at least as large as the global MLE, i.e. $\hat g^{MLE}_l\ge\hat g^{MLE}$.
By construction, $(\hat g^{MLE}_l)_l$ are independent and each
\[ \hat\theta_l^{MLE}:=\int_{J_l} \hat g_l^{MLE}(x)w(x)\,dx-\frac1n\sum_{j\ge 1}{\bf 1}\big(X_j\in J_l,\,\hat g_l^{MLE}(X_j)=Y_j\big)w(X_j)\]
enjoys the non-asymptotic properties of Theorem \ref{TheoMLE1} on $J_l$, in particular $\E[\hat\theta_l^{MLE}]=\int_{J_l}g(x)w(x)dx$ and $\Var(\hat\theta_l^{MLE})=\frac1n\int_{J_l}\E[\hat g_l^{MLE}(x)-g(x)]w(x)^2\,dx$. Let us therefore first establish for the blockwise MLE $\tilde\theta_n:=\sum_{l=0}^{r_n^{-1}-1} \hat\theta_l^{MLE}$ that
\begin{equation}\label{EqCLTtilde}
\Big(\frac1n\sum_{l=0}^{r_n-1}\int_{J_l}\E[\hat g_l^{MLE}(x)-g(x)]w(x)^2\,dx\Big)^{-1/2}(\tilde\theta_n-\theta)\Rightarrow N(0,1).
\end{equation}
By independence of $(\hat\theta_l^{MLE})$, for the CLT to hold it suffices to check the 4th moment Lyapunov condition
\[ \frac{\sum_{l=0}^{r_n^{-1}-1}\E[(\hat\theta_l-\theta_l)^4]}{\Var(\tilde\theta_n)^2}\to 0.\]
For each $l=0,\ldots,r_n-1$ let $(\bar M_{l,t})_t$ be the compensated weighted counting process from the proof of Theorem \ref{TheoMLE1}, restricted to $J_l$. The (non-predictable) quadratic variation  of  $(\bar M_{l,t})$ is given by the sum of squared jumps:
\[ [\bar M_l]_t=\sum_{s\le t} (\Delta\bar M_{l,s})^2=\sum_{j\ge 1}{\bf 1}\Big(X_j\in J_l, Y_j\le t\wedge \min_{i: X_i\in J_l}(Y_i+R\abs{X_j-X_i})\Big)w(X_j)^2.\]
The Burkholder-Davis-Gundy inequality (e.g. Thm 26.12 in \cit{kallenberg}) then yields by similar arguments as for $(\bar M_t)$ above
\begin{align*}
\E[\bar M_{l,\infty}^4]&\lesssim \E[[\bar M_l]_\infty^2]
= \E\Big[\Big(n\int_{J_l}(\hat g_l^{MLE}-g)w^2\Big)^2+n\int_{J_l}(\hat g_l^{MLE}-g)w^4\Big].
\end{align*}
Using Jensen's inequality, we find 
\begin{align*}
&\sum_{l=0}^{r_n^{-1}-1}\E[(\hat\theta_l^{MLE}-\theta_l)^4]
=\frac{1}{n^4}\sum_{l=0}^{r_n^{-1}-1}\E[\bar M_{l,\infty}^4]\\
&\quad\lesssim \sum_{l=0}^{r_n^{-1}-1}\E\Big[n^{-2}\Big(\int_{J_l}(\hat g_l^{MLE}-g)w^2\Big)^2+n^{-3}\int_{J_l}(\hat g_l^{MLE}-g)w^4\Big]\\
&\quad\le \sum_{l=0}^{r_n^{-1}-1}\Big(n^{-2}r_n\int_{J_l}\E[(\hat g_l^{MLE}-g)^2]w^4+n^{-3}\int_{J_l}\E[\hat g_l^{MLE}-g]w^4\Big).
\end{align*}
As in \eqref{EqgMLE} we can bound
\begin{equation}\label{EqgMLEr}
P(\hat g^{MLE}_l(x)-g(x)\ge s)\le \begin{cases}\exp(-nR(s/2R)^2),& s\in[0,2Rr_n],\\ \exp(-n(sr_n-Rr_n^{2})),& s>2Rr_n.\end{cases}
\end{equation}
Noting $r_n n^{1/2}\to\infty$ and $\norm{w}_\infty<\infty$, we  apply the  moment bound of Lemma \ref{LemSelfnorm} to $\hat g^{MLE}_l(x)-g(x)$ with $a(s)=n\int_{J_l}(s-R\abs{\xi-x}+g(x)-g(\xi))_+dx$ and integrate over $s$ to obtain
\[ \sum_{l=0}^{r_n^{-1}-1}\E[(\hat\theta_l^{MLE}-\theta_l)^4]
\lesssim (r_n+n^{-1/2})\big(n^{-3/2}\big)^2.
\]
Hence, in view of $\Var(\tilde\theta_n)\ge\Var(\hat\theta_n^{MLE})\thicksim n^{-3/2}$ the Lyapunov condition is satisfied and the CLT \eqref{EqCLTtilde} follows.

In the second step we show that the difference between $\tilde\theta_n$ and $\hat\theta_n^{MLE}$ is of small stochastic order $o_P(n^{-3/4})$. First, we note that the above martingale arguments yield
\[\E[(\tilde\theta_n-\hat\theta_n^{MLE})^2]=\Var(\tilde\theta_n-\hat\theta_n^{MLE})=n^{-1}\sum_{l=0}^{r_n^{-1}-1}\int_{J_l}\E[\hat g_l^{MLE}(x)-\hat g^{MLE}(x)]w(x)^2dx.\]
Introduce the notation $\hat g^{MLE}_{-l}(x)=\min_{i:X_i\notin J_l}(Y_i+R\abs{x-X_i}^\beta)$ and consider the event
\[\Omega_n=\Big\{\forall l=0,\ldots,r_n^{-1}-1\; \exists x\in J_l:\hat g_l^{MLE}(x)=\hat g^{MLE}(x)\Big\}
\]
whose complement is given by $\Omega_n^\complement=\bigcup_l \{\min_{x\in J_l}(\hat g_l^{MLE}-\hat g_{-l}^{MLE})(x)>0\}$. By independence of $\hat g_l^{MLE}$ and $\hat g_{-l}^{MLE}$ and conditioning on the latter we obtain
\begin{align*}
&P\Big(\min_{x\in J_l}(\hat g_l^{MLE}-\hat g_{-l}^{MLE})(x)>0\Big) = \E\Big[\exp\Big(-n\int_{J_l}(\hat g_{-l}^{MLE}-g)(x)\,dx\Big)\Big]\\
&\quad \le  \E\Big[\exp\Big(-nr_n\min\Big((\hat g_{-l}^{MLE}-g)(lr_n),(\hat g_{-l}^{MLE}-g)((l+1)r_n)\Big)\Big)\Big].
\end{align*}
Using $\hat g^{MLE}_{l'}\ge \hat g_{-l}^{MLE}$ for $l'\not=l$, the bound \eqref{EqgMLEr} yields
\begin{align*}
P\Big(\min_{x\in J_l}(\hat g_l^{MLE}-\hat g_{-l}^{MLE})(x)>0\Big) &\le 2\max_{l',x}\E\Big[\exp(-nr_n(\hat g^{MLE}_{l'}(x)-g(x)))\Big]\\
&\lesssim n^{-1}r_n^{-2}.
\end{align*}
We conclude $P(\Omega_n^\complement)=O(n^{-1} r_n^{-3})\to 0$  by a union bound and the choice of $r_n$.

On the event $\Omega_n$ the left-most point $L_l$ in $J_l$ where $\hat g_l^{MLE}$ and $\hat g^{MLE}$ coincide is well defined and satisfies for $l\ge 1$
\begin{align*}
L_l&:=\inf\{x\in J_l\,|\,\hat g_l^{MLE}(x)=\hat g^{MLE}(x)\}\\
&=\inf\{x\in J_l\,|\,\hat g_l^{MLE}(x)\le\hat g_{l-1}^{MLE}(lr_n)+R(x-lr_n)\}.
\end{align*}
Now $L_l=lr_n$ holds  on $\Omega_n$ if the corresponding right-most point $R_{l-1}:=\sup\{x\in J_{l-1}\,|\,\hat g_{l-1}^{MLE}(x)=\hat g^{MLE}(x)\}$ on $J_{l-1}$ satisfies $R_{l-1}<lr_n$
and vice versa $L_l>lr_n \Rightarrow R_{l-1}=lr_n$. Due to this symmetry we  only consider the case $L_l>lr_n$. For $z\in(0,r_n]$ and $l=1,\ldots,r_n^{-1}-1$ a rough bound yields:
\begin{align*}
P\Big(L_l\ge lr_n+z\,\Big|\,((X_i,Y_i){\bf 1}(X_i<lr_n))_{i\ge 1}\Big) &\le \exp\Big(-n\int_{lr_n}^{lr_n+z} (\hat g_{l-1}^{MLE}(x)-g(x))\,dx\Big)\\
\le \exp\Big(-nz (\hat g_{l-1}^{MLE}(lr_n)-g(lr_n))\Big).
\end{align*}
Since $\hat g_l^{MLE}(L_l)=\hat g_{l-1}^{MLE}(L_l)$ holds and both functions are in $\cc^1(R)$, we obtain the bound
\[
 \int_{lr_n}^{L_l}(\hat g_l^{MLE}-\hat g_{l-1}^{MLE})(x)dx\le \int_{lr_n}^{L_l} 2R(L_l-x) dx\lesssim (L_l-lr_n)^2.
\]
By the identity $\E[Z^2]=\int_0^\infty 2z P(Z\ge z)dz$ for non-negative random variables $Z$ and by the above probability bound for $L_l$  we  obtain further
\begin{align*}
\E\Big[\int_{lr_n}^{L_l}(\hat g_l^{MLE}-\hat g^{MLE}){\bf 1}_{\Omega_n}\Big] & \lesssim \E\Big[\int_0^{r_n} z e^{-nz(\hat g_{l-1}^{MLE}(lr_n)-g(lr_n))}dz\Big]\\
&\lesssim \E\Big[\min\Big(\frac{1}{n(\hat g_{l-1}^{MLE}(lr_n)-g(lr_n))},r_n\Big)^{2}\Big].
\end{align*}
Using \eqref{EqgMLEr} on $J_{l-1}$, we arrive, after suitable substitution inside the integral, at
\begin{align*} \E\Big[\int_{lr_n}^{L_l}(\hat g_l^{MLE}-\hat g^{MLE}){\bf 1}_{\Omega_n}\Big] &\lesssim \int_0^\infty \min(u^{-1}n^{-1},r_n^{2})e^{-u}du\\
&\lesssim n^{-1}\log(nr_n^2).
\end{align*}

Summing over $l$, bounding the alternative case $R_{l-1}<lr_n$ by the same estimate and using $\norm{w}_\infty<\infty$, we arrive at
\begin{equation}\label{EqIntghat}
  \E\Big[\sum_{l=0}^{r_n^{-1}-1}\int_{J_l}(\hat g_l^{MLE}-\hat g^{MLE})(x)w(x)^2dx\,{\bf 1}_{\Omega_n}\Big]\lesssim (nr_n)^{-1}\log(nr_n^2)=o(n^{-1/2}).
\end{equation}
This gives the desired result $\E[(\tilde\theta_n-\hat\theta_n^{MLE})^2{\bf 1}_{\Omega_n}]=o(n^{-3/2})$ with $P(\Omega_n)\to 1$.

 Furthermore, from \eqref{EqIntghat} we derive also that
\begin{align*}
&\sum_{l=0}^{r_n-1}\int_{J_l}\E[\hat g_l^{MLE}(x)-g(x)]w(x)^2\,dx-\int_0^1\E[\hat g^{MLE}(x)-g(x)]w(x)^2\,dx\\
&=o(n^{-1/2})+O\Big(\sup_{l,x\in J_l}\E[(\hat g_l^{MLE}(x)-\hat g^{MLE}(x)){\bf 1}_{\Omega_n^\complement}]\Big).
\end{align*}
By the Cauchy-Schwarz inequality, the last term is at most of order $O(n^{-1/2}P(\Omega_n^\complement)^{1/2})=o(n^{-1/2})$. Hence, applying Slutsky's Lemma twice to the CLT \eqref{EqCLTtilde}, we  arrive at
\[ \Big(\frac1n\int_{0}^1\E[\hat g^{MLE}(x)-g(x)]w(x)^2\,dx\Big)^{-1/2}(\hat\theta_n^{MLE}-\theta)\Rightarrow N(0,1).\]

By Jensen's inequality, $\Var(\int_{J_l} (\hat g_l^{MLE}-g)w^2)\le r_n\int_{J_l}\E[ (\hat g_l^{MLE}-g)^2]w^4\lesssim r_n^2n^{-2/3}$,
which implies
\[ \Var\Big(\sum_{l=0}^{r_n-1}\int_{J_l}(\hat g_l^{MLE}(x)-g(x))w(x)^2\,dx\Big)\lesssim r_nn^{-2/3}=o(n^{-2/3}).
\]
We infer $\sum_{l=0}^{r_n-1}\int_{J_l}(\hat g_l^{MLE}-g)w^2\xrightarrow{P} \sum_{l=0}^{r_n-1}\int_{J_l}\E[\hat g_l^{MLE}-g]w^2$.
Together with \eqref{EqIntghat}, this yields the  CLT
\[ \Big(\int_0^1(\hat g^{MLE}(x)-g(x))w(x)^2\,dx\Big)^{-1/2}n^{-1/2}(\hat\theta_n^{MLE}-\theta)\Rightarrow N(0,1).\]
Now note that the MLE  for the functional $\int gw^2$
\[ \hat\theta_n^{MLE}(w^2):=\int_0^1 \hat g^{MLE}(x)w(x)^2dx-\frac1n\sum_{j\ge 1}{\bf 1}\big(\hat g^{MLE}(X_j)=Y_j\big)w(X_j)^2\]
is also unbiased with $\Var(\hat\theta_n^{MLE}(w^2))^{1/2}\lesssim n^{-3/4}$. Since by assumption $\int_0^1\E[\hat g^{MLE}-g]w^2\thicksim n^{-1/2}$ is of larger order, Slutsky's Lemma permits to replace $\int_0^1\E[\hat g^{MLE}-g]w^2$ by $\int_0^1\hat g^{MLE}w^2-\hat\theta_n^{MLE}(w^2)$ in the CLT, which gives the desired self-normalising form.

\bibliography{Literatur}

\end{document}